\title{Semilinear Elliptic Equations on Rough Domains}
\author[1]{Wolfgang Arendt}
\author[2]{Daniel Daners}
\affil[1]{Institut f\"ur Angewandte Analysis, Universit\"at Ulm, Germany\authorcr
  \nolinkurl{wolfgang.arendt@uni-ulm.de}}
\affil[2]{School of Mathematics and Statistics, University of Sydney, NSW 2006, Australia\authorcr
  \nolinkurl{daniel.daners@sydney.edu.au}}
\date{February 6, 2022}
\numberwithin{equation}{section}
\numberwithin{figure}{section}
\theoremstyle{plain}
\newtheorem{theorem}{Theorem}[section]
\newtheorem{lemma}[theorem]{Lemma}
\newtheorem{proposition}[theorem]{Proposition}
\newtheorem{corollary}[theorem]{Corollary}
\theoremstyle{definition}
\newtheorem{definition}[theorem]{Definition}
\newtheorem{example}[theorem]{Example}
\theoremstyle{remark}
\newtheorem{remark}[theorem]{Remark}
\DeclareMathOperator{\dist}{dist}
\DeclareMathOperator{\Tr}{Tr}
\DeclareMathOperator{\tr}{tr}
\DeclareMathOperator{\supp}{supp}
\DeclareMathOperator{\spr}{r}
\DeclareMathOperator{\spb}{s}
\DeclareMathOperator{\repart}{Re}
\DeclareMathOperator{\aaa}{\mathfrak{a}}
\DeclareMathOperator{\loc}{loc}
\let\oldthebibliography\thebibliography
\renewcommand\thebibliography[1]{
  \oldthebibliography{#1}
  \setlength{\parskip}{.2ex}
  \setlength{\itemsep}{0pt plus 0.3ex}
  \small
}
\begin{document}
\maketitle

\renewcommand{\thefootnote}{}
\footnotetext{\textbf{Mathematics Subject Classification (2020):} 35J61, 47N20, 47H07.}
\footnotetext{\footnotesize\textbf{Keywords:} semilinear elliptic equations; rough domains; abstract logistic equation; Kato inequality; principal eigenvalue}

\begin{abstract}
  The paper makes use of recent results in the theory of Banach lattices and positive operators to deal with abstract semilinear equations. The aim is to work with minimal or no regularity conditions on the boundary of the domains, where the usual arguments based on maximum principles do not apply. A key result is an application of Kato's inequality to prove a comparison theorem for eigenfunctions that only requires interior regularity and avoids the use of the Hopf boundary maximum principle. We demonstrate the theory on an abstract degenerate logistic equation by proving the existence, uniqueness and stability of non-trivial positive solutions. Examples of operators include the Dirichlet Laplacian on arbitrary bounded domains, a simplified construction of the Robin Laplacian on arbitrary domains with boundary of finite measure and general elliptic operators in divergence form.
\end{abstract}

\section{Introduction}
Many arguments used in the theory of semi-linear elliptic boundary value problems rely on the maximum principle, and in particular the Hopf boundary maximum principle as for instance presented in \cite[Theorem~2.7]{protter:67:mpd}. These maximum principles imply positivity and comparison theorems for solutions. Such methods can be recast in functional analytic terms using the theory of positive operators on ordered Banach spaces. That approach was pioneered in Amann's seminal paper \cite{amann:76:fpe}. The theory requires an ordered Banach space whose positive cone has a non-empty interior. In the context of elliptic boundary value problems this forces domains to be regular, most often of class $C^2$ or better such that Hopf's boundary maximum principle applies, see for instance \cite[Section~I.4]{amann:76:fpe}. Interior $C^2$-regularity can be slightly weakened by using Bony's maximum principle, see for instance \cite{bony:67:pme,lions:83:rbm}. Nevertheless, without any workarounds, the requirement that the interior of the positive cone be non-empty excludes the Lebesgue space $L^p$ since the positive cone in $L^p$ has empty interior. This adds a layer of complexity we aim to remove.

A substantial part of this paper is devoted to discuss these abstract tools by making use of more recent developments in functional analysis. The functional analytic tools we rely on are in particular de Pagter's theorem on the positivity of the spectral radius of compact irreducible operators; see \cite{depagter:86:ico,meyer:91:bla} and an abstract version of Kato's inequality from \cite{arendt:84:kic}. The latter replaces arguments usually involving Hopf's boundary maximum principle. We also make use of very recent criteria about positivity improving semigroups from \cite{arendt:20:spp} that replace arguments usually based on maximum principles or the Harnack inequality.

Our main point is to prove the existence of a unique non-trivial positive solution of a semilinear logistic quation with degeneracies in Section~\ref{sec:logistic} under no or very minimal regularity assumptions on $\Omega$. Our most innovative result is a comparison principle for eigenvectors established in Section~\ref{sec:eigenvector-comparison} which allows us to establish a pair of sub-solution and super-solutions in very general situations by local arguments in the interior of the domain; see Proposition~\ref{prop:exist-2}.  In this way we are able to essentially remove all requirements on the regularit of the boundary of $\Omega$ and only work with the local continuity and the global boundedness of solutions rather than relying on the Hopf boundary maximum principle. At the same time we streamline and simplify existing arguments. More precisely, we consider the abstract logistic equation
\begin{equation*}
  Au=\lambda u-m(x)g(x,u)u
\end{equation*}
on $L^p(\Omega)$ ($1\leq p<\infty$), where $\Omega\subseteq\mathbb R^N$ is a bounded domain and $-A$ is the generator of a compact irreducible positive $C_0$-semigroup $(T(t))_{t\geq 0}$. We furthermore assume that the semigroup exhibits some inner regularity and global boundedness properties, that is,
\begin{equation}
  \label{eq:bounded-smoothing}
  T(t)L_p(\Omega)\subseteq BC(\Omega):=C(\Omega)\cap L^\infty(\Omega).
\end{equation}
Such a regularity condition is satisfied under no or rather weak assumptions on $\Omega$ depending on the boundary conditions. It automatically implies the compactness of $T(t)$ as shown in Proposition~\ref{prop:bounded-compact} below. We assume that the nonlinearity $g(x,u)$ is strictly increasing to infinity as $u\to\infty$ and that $m\geq 0$ is a non-zero function possibly having some zero set. This zero set is referred to as a \emph{degeneracy}. For more precise assumptions we refer to Section~\ref{sec:logistic}. We also prove the uniqueness, linear stability and differentiable dependence of the unique positive solution on $\lambda$.

A first tool we establish in Section~\ref{sec:strong-monotonicity} is the strong monotonicity of the spectral radius for pairs of positive operators, one dominating the other. This is usually achieved by the strong maximum principle including the Hopf boundary maximum principle such as it is for instance done in \cite{amann:76:fpe,du:06:ost,lopez:13:lso}. Here we only use the irreducibility and compactness of operators on a Banach lattice. There are simple criteria to test positivity and irreducibility for generators induced by bilinear forms associated with the weak formulation of elliptic equations, see \cite[Theorem~2.6 and~2.10]{ouhabaz:05:ahe}.

In Section~\ref{sec:perturbation-multipliers} we consider eigenvalue perturbations and prove convergence results assuming only weak$^*$ convergence of the sequence of multiplication operators. In Section~\ref{sec:smoothing-semigroups} we introduce locally smoothing and positivity improving semigroups. They reflect the fact that, due to the theory of de Giorgi, Nash and Moser, locally bounded weak solutions to elliptic or parabolic equations are always continuous in the interior of a domain, but that boundary regularity requires smoothness of the boundary. The fact that the semigroups are positivity improving serves as an alternative to the maximum principle. We discuss the Kato inequality in Section~\ref{sec:kato-inequality} and use it as a replacement of the maximum principle to compare eigenvectors associated with operators with different potentials in Section~\ref{sec:eigenvector-comparison}. We finally provide a variety of examples of such semigroups in Sections~\ref{sec:examples-laplacian} and~\ref{sec:elliptic-divergence-form}. This includes a new, more elementary treatment of the Laplace operator with Robin boundary conditions on rough domains in Subsection~\ref{sec:robin-laplacian}.

\section{Strong monotonicity  for principal eigenvalues}
\label{sec:strong-monotonicity}
We will throughout work with the theory of Banach lattices and positive operators as for instance presented in \cite{schaefer:74:blp,meyer:91:bla}.  Let $E$ be a Banach lattice and $S,T\in\mathcal L(E)$ positive operators such that $0\leq S\leq T$. The spectral radius is given by $\spr(S):=\sup\{|\lambda|\colon\lambda\in\sigma(S)\}$. By Hadamard's formula for the spectral radius we have
\begin{equation*}
  \spr(S)
  \leq\lim_{n\to\infty}\|S^n\|^{1/n}
  \leq\lim_{n\to\infty}\|T^n\|^{1/n}
  =\spr(T).
\end{equation*}
The aim of this section is to provide a simple criterion such that $\spr(S)<\spr(T)$.

To be able to formulate the result in a precise manner we need to introduce some terminology. If $E$ is a Banach lattice and $u$ is in the positive cone $E_+$, then we write $u>0$ if $u\geq 0$ and $u\neq 0$. An element $u$ of $E_+$ is called a \emph{quasi-interior point} if $f\land nu\to f$ as $n\to\infty$ for all $f\in E_+$. We write $u\gg 0$ to say that $u\in E_+$ is a quasi-interior point. As an example consider $E=L^p(\Omega)$, where $\Omega\subseteq\mathbb R^N$ is open and $1\leq p\leq\infty$. Then, $u\geq 0$ means that $u(x)\geq 0$ almost everywhere and $u>0$ means that $u\geq 0$ but $u(x)$ does not vanish almost everywhere. If $1\leq p<\infty$, then $u\gg 0$ means that $u(x)>0$ almost everywhere, whereas if $p=\infty$ it means that there exists $\delta>0$ such that $u(x)\geq \delta>0$ almost everywhere.

The dual space $E'$ of $E$ is a Banach lattice as well. Its positive cone $E'_+$ consists of all functionals $\varphi\in E'$ such that $\langle u,\varphi\rangle\geq 0$ for all $u\in E_+$. We say that $\varphi$ is \emph{strictly positive} if $u\geq 0$ and $\langle u,\varphi\rangle=0$ imply that $u=0$. As an example, if $E=L^p(\Omega)$, $1\leq p<\infty$, and $\varphi\in L^{p'}(\Omega)$, then $\varphi\in E_+'$ if and only if $\varphi(x)\geq 0$ almost everywhere, whereas $\varphi$ is strictly positive if and only if $\varphi(x)>0$ almost everywhere.

An operator $0<S\in\mathcal L(E)$ is called \emph{irreducible} if for every closed ideal $J$ of $E$ with $S(J)\subseteq J$ we have $J=\{0\}$ or $J=E$; see \cite[Definition~4.2.1]{meyer:91:bla}. It is easy to see that $S$ is irreducible if and only if for all $0<u\in E$ and $0<\varphi\in E'$ there exists $n\in\mathbb N$ such that $\langle S^nu,\varphi\rangle>0$. It is this latter property we will use here.

We finally call $S\in\mathcal L(E)$ \emph{positivity improving} if $u>0$ implies that $Su\gg 0$ for all $u\in E$. Each positivity improving operator is obviously irreducible.

Now consider a positive irreducible operator $T\in\mathcal L(E)$. If $T^n$ is compact for some $n\in\mathbb N$, then by de Pagter's theorem we have $\spr(T)>0$; see \cite[Theorem~3]{depagter:86:ico} or \cite[Theorem~4.2.2]{meyer:91:bla}. By the Krein-Rutman Theorem there exists a unique $0<u\in E$ such that $\|u\|=1$ and $Tu=\spr(T)u$. We call $u$ the \emph{principal eigenvector} of $T$. In that case $u\gg 0$ and the spectral projection associated with $\{\spr(T\})$ has one-dimensional image. Moreover, $\spr(T)$ is the only eigenvalue of $T$ having a positive eigenvector; see for instance \cite[Theorem~4.1.4 \& 4.2.13]{meyer:91:bla}.

Thus, if $T$ is irreducible, then $T$ has at most one principal eigenvalue and eigenvector. The core of the argument leading to the following theorem appears in \cite{arendt:92:dep}.

\begin{theorem}
  \label{thm:spr-monotone}
  Let $E$ be a Banach lattice and let $S,T\in\mathcal L(E)$ be such that $0\leq S\leq T$. Further suppose that $S$ is irreducible and that $T^k$ is compact for some $k\in\mathbb N$. Then $\spr(S)\leq\spr(T)$ with equality if and only if $S=T$.
\end{theorem}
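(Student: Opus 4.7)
The Hadamard-formula inequality $\spr(S)\leq\spr(T)$ is already observed before the statement, so the substance is the equality case. Assuming $\spr(S)=\spr(T)=:r$, I would deduce $S=T$ by pairing a principal eigenvector of $S$ in $E$ against a strictly positive eigenfunctional of $T'$ in $E'$ and exploiting the two-sided domination $0\leq S\leq T$ and $0\leq S'\leq T'$.

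First I would verify that $T$ inherits irreducibility from $S$. Given a closed ideal $J\subseteq E$ with $T(J)\subseteq J$, the estimate $0\leq Sf\leq Tf\in J$ for $f\in J_+$ combined with the ideal property forces $Sf\in J$, and splitting $f=f^+-f^-$ extends this to $S(J)\subseteq J$; irreducibility of $S$ then gives $J\in\{\{0\},E\}$. Since $0\leq S^k\leq T^k$ with $T^k$ compact, the Aliprantis--Burkinshaw domination theorem for positive operators on Banach lattices (see \cite{meyer:91:bla}) guarantees that some power $S^m$ is compact. Thus both $S$ and $T$ are irreducible with a compact power, and the Krein--Rutman setup recalled just before the statement furnishes a quasi-interior point $u\gg0$ with $Su=ru$, as well as a strictly positive $\psi\in E'_+$ with $T'\psi=r\psi$ (the latter being the standard dual companion to Krein--Rutman for an irreducible operator with a compact power, available precisely because $T$ is itself irreducible).

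Combining the two ingredients I would write
\begin{equation*}
  r\,\langle u,\psi\rangle=\langle Su,\psi\rangle=\langle u,S'\psi\rangle\leq\langle u,T'\psi\rangle=r\,\langle u,\psi\rangle,
\end{equation*}
so the chain collapses and $\langle u,(T'-S')\psi\rangle=0$ with $(T'-S')\psi\geq0$. The defining property $f\land nu\to f$ of a quasi-interior point $u$ then forces $\langle f,(T'-S')\psi\rangle=0$ for every $f\in E_+$, so that $(T'-S')\psi=0$. Consequently, for every $f\in E_+$,
\begin{equation*}
  \langle(T-S)f,\psi\rangle=\langle f,(T'-S')\psi\rangle=0,
\end{equation*}
and strict positivity of $\psi$ together with $(T-S)f\geq0$ forces $(T-S)f=0$. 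Since $E_+$ generates $E$, we conclude $T=S$.

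The step I expect to be the main obstacle is securing the strictly positive eigenfunctional $\psi$ of $T'$: this rests on the fact that $T$, as well as $S$, turns out to be irreducible (not just to have a compact power), and on the sharper form of Krein--Rutman for irreducible operators that produces a \emph{strictly positive} functional on the dual side. Once $u$ and $\psi$ are in hand, the collapse of the chain of inequalities is an essentially two-line computation, and the characterisation of quasi-interior points finishes the argument without any maximum-principle input.
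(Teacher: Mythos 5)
Your proof is correct and follows essentially the same route as the paper's: Aliprantis--Burkinshaw to get a compact power of $S$, de Pagter and Krein--Rutman to produce the eigenvectors, and a duality pairing that collapses because one eigenvector is a quasi-interior point and the other functional is strictly positive. The only (harmless) difference is a mirror-image choice of roles: you take $u\gg 0$ with $Su=ru$ and a strictly positive $\psi$ with $T'\psi=r\psi$ (hence your explicit check that $T$ inherits irreducibility from $S$), whereas the paper takes $u\gg 0$ with $Tu=u$ and the strictly positive $\varphi$ with $S'\varphi=\varphi$, so both arguments rest on exactly the same ingredients.
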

\begin{proof}
  At the beginning of the section we already saw that $\spr(S)\leq\spr(T)$. Assume now that $\spr(S)=\spr(T)$.  We have to show that $T=S$. Since $T^k$ is compact and $0\leq S\leq T$, it follows from the Theorem of Aliprantis-Burkinshaw that $S^{3k}$ is compact; see \cite[Corollary~3.7.15]{meyer:91:bla}. As $S$ is irreducible it follows from de Pagter's theorem that $\spr(S)>0$ as discussed in the lines preceding the theorem.  Rescaling we can assume that $\spr(S)=\spr(T)=1$. By the Krein-Rutman theorem we can find $0\ll u\in E$ such that $Tu=u$. As $S^{3k}$ is compact, also $(S')^{3k}$ is compact. Again by the Krein-Rutman Theorem there exists $0<\varphi\in E_+$ such that $S'\varphi=\varphi$. We show that $\varphi$ is strictly positive. Let $0<v\in E$. Since $S$ is irreducible, there exists $n\in\mathbb N$ such that $\langle S^nv,\varphi\rangle>0$. Hence
  \begin{equation*}
    \langle v,\varphi\rangle
    =\left\langle v,(S')^n\varphi\right\rangle
    =\left\langle S^nv,\varphi\right\rangle>0,
  \end{equation*}
  showing that $\varphi$ is strictly positive. Next observe that
  \begin{equation*}
    \langle u,T'\varphi-S'\varphi\rangle
    =\langle Tu,\varphi\rangle-\langle u,S'\varphi\rangle
    =\langle u,\varphi\rangle-\langle u,\varphi\rangle
    =0.
  \end{equation*}
  As $u\gg 0$ and $0\leq T'\varphi-S'\varphi$ we conclude that $T'\varphi-S'\varphi=0$. Now let $0\leq f\in E$. Then $Tf-Sf\geq 0$ and
  \begin{equation*}
    \langle Tf-Sf,\varphi\rangle
    =\langle f,T'\varphi-S'\varphi\rangle
    =0.
  \end{equation*}
  As $\varphi$ is strictly positive this implies that $Tf-Sf=0$, that is, $Tf=Sf$ for all $f\in E_+$ and hence for all $f\in E$.
\end{proof}

For later purposes we show that the spectral radius and the principal eigenvector are continuous with respect to $T$. This is really folklore in special cases. We include a proof in our general setting.

\begin{proposition}[Continuity of spectral radius]
  \label{prop:spr-continuous}
  Let $T_n,T\in\mathcal L(E)$ be irreducible positive compact operators such that $T_n\to T$ in $\mathcal L(E)$ as $n\to\infty$. Denote by $u_n$ and $u$ the principal eigenvalues of $T_n$ and $T$ respectively. Then $\spr(T_n)\to\spr(T)$ and $u_n\to u$.
\end{proposition}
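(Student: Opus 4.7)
My plan is to apply spectral perturbation theory to the isolated algebraically simple principal eigenvalue $\spr(T)>0$ of the compact irreducible operator $T$. As recalled just before Theorem~\ref{thm:spr-monotone}, both $T$ and each $T_n$ admit a strictly positive principal eigenvalue with a unique normalised positive eigenvector (namely $u$ and $u_n$), and the Riesz projection of $T$ onto $\mathrm{span}(u)$ is one-dimensional. I shall separately establish the upper bound $\limsup_n\spr(T_n)\leq\spr(T)$ from resolvent estimates and the matching lower bound together with the eigenvector convergence from a Riesz-projection argument.

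For the upper bound, fix $\varepsilon>0$ and set $\Omega_\varepsilon:=\{z\in\C:|z|\geq\spr(T)+\varepsilon\}$. Since $T$ is compact, $\Omega_\varepsilon\subseteq\rho(T)$, and continuity of the resolvent $R(\cdot,T)$ on the compact annulus $\spr(T)+\varepsilon\leq|z|\leq\|T\|+1$ together with the Neumann estimate $\|R(z,T)\|\leq(|z|-\|T\|)^{-1}$ for $|z|>\|T\|$ shows $M:=\sup_{z\in\Omega_\varepsilon}\|R(z,T)\|<\infty$. Writing $zI-T_n=(I-(T-T_n)R(z,T))(zI-T)$ and inverting by a Neumann series gives $\Omega_\varepsilon\subseteq\rho(T_n)$, and therefore $\spr(T_n)<\spr(T)+\varepsilon$, as soon as $\|T_n-T\|<(2M)^{-1}$.

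For the matching lower bound, I choose a small circle $\Gamma$ around $\spr(T)$ lying in $\rho(T)$ and enclosing no other point of $\sigma(T)$. Uniform convergence of $R(z,T_n)\to R(z,T)$ on $\Gamma$ forces the Riesz projections $P_n:=\frac{1}{2\pi i}\oint_\Gamma R(z,T_n)\,dz$ to converge in $\mathcal L(E)$ to $P:=\frac{1}{2\pi i}\oint_\Gamma R(z,T)\,dz$, which is of rank one with range $\mathrm{span}(u)$. Since projections close in norm share the same finite rank, $P_n$ is also rank one for large $n$; its range is a $T_n$-invariant line, hence the eigenline of some eigenvalue $\lambda_n\in\sigma(T_n)$ inside $\Gamma$. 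Consequently $\spr(T_n)\geq|\lambda_n|$, and shrinking the radius of $\Gamma$ yields $\spr(T_n)\to\spr(T)$.

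For the eigenvectors, once $\spr(T_n)$ lies inside $\Gamma$ it is the only eigenvalue of $T_n$ there, so the range of $P_n$ coincides with $\mathrm{span}(u_n)$ and $P_nu=\beta_nu_n$ for some scalar $\beta_n$; from $P_nu\to Pu=u$, $\|u_n\|=1$, and $u_n,u\geq 0$ we conclude $\beta_n\to 1$ and hence $u_n\to u$ in $E$. The step I expect to be the main obstacle is precisely the lower bound: a direct compactness extraction applied to $T_nu_n=\spr(T_n)u_n$ only gives $Tu_{n_k}\to v$ and $\spr(T_{n_k})u_{n_k}\to v$ along subsequences, and excluding the degenerate possibility $\spr(T_{n_k})\to 0$ (which would force $v=0$ and destroy all control of $u_{n_k}$) seems to require the Riesz-projection argument together with the algebraic simplicity of $\spr(T)$ supplied by Krein--Rutman for irreducible compact positive operators.
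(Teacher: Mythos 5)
Your argument is correct, and it takes a partly different route from the paper's. For the eigenvalue convergence the paper simply cites Kato (upper semicontinuity of the spectrum, and Theorem~IV.3.16 for the persistence of spectrum near the isolated point $\spr(T)$), whereas you prove both facts directly: the upper bound by a Neumann-series estimate and the lower bound by Riesz projections; note the harmless sign slip in your factorisation, which should read $zI-T_n=\bigl(I+(T-T_n)R(z,T)\bigr)(zI-T)$. The genuinely different step is the eigenvector convergence. The paper writes $Tu_n=(T-T_n)u_n+\spr(T_n)u_n$, extracts a norm-convergent subsequence of $(Tu_n)$ using compactness of $T$, uses the already established convergence $\spr(T_n)\to\spr(T)>0$ to conclude that $u_{n_k}$ converges to a normalised positive eigenvector of $T$, and then identifies the limit and upgrades to the whole sequence via uniqueness of the principal eigenvector. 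You instead exploit norm convergence of the rank-one spectral projections $P_n\to P$: the range of $P_n$ is eventually $\mathrm{span}(u_n)$, and positivity of $u_n$ and $u$ rules out the limit $-u$, so no subsequence extraction is needed. Your route buys self-contained proofs of the perturbation facts the paper outsources to Kato and a cleaner passage to the full sequence, at the cost of invoking the algebraic simplicity of $\spr(T)$ (one-dimensionality of the spectral projection) and the fact that $\spr(T_n)$ is an eigenvalue with positive eigenvector $u_n$; both facts are recorded in the paper's preamble to Theorem~\ref{thm:spr-monotone} via Krein--Rutman and de Pagter, so this is legitimate. Finally, the ``main obstacle'' you flag --- excluding $\spr(T_{n_k})\to 0$ in a direct compactness extraction --- is resolved in the paper not by projections but by proving $\spr(T_n)\to\spr(T)>0$ before touching the eigenvectors, after which the extraction argument is safe; both resolutions work.
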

\begin{proof}
  By the upper semi-continuity of the spectrum $\limsup_{n\to\infty}\spr(T_n)\leq\spr(T)$; see \cite[Section~IV.3]{kato:76:ptl}. For the opposite inequality note that $\spr(T)$ is an isolated point of $\sigma(T)$, so there exists $\varepsilon_0>0$ such that $B(\spr(T),\varepsilon_0)\cap \sigma(T)=\{\spr(T)\}$. By \cite[Theorem~IV.3.16]{kato:76:ptl}, for every $\varepsilon\in(0,\varepsilon_0)$ there exists $n_0\in\mathbb N$ such that $\sigma(T_n)\cap B(\spr(T),\varepsilon)\neq\emptyset$ for all $n>n_0$. Hence $\liminf_{n\to\infty}\spr(T_n)\geq\spr(T)$ and thus $\lim_{n\to\infty}\spr(T_n)=\spr(T)$.  Let $0<u_n$ be the principal eigenvector of $T_n$. Then
  \begin{equation}
    \label{eq:spr-perturbation}
    Tu_n=(T-T_n)u_n+\spr(T_n)u_n.
  \end{equation}
  Since $\|u_n\|=1$ for all $n\in\mathbb N$ and $T$ is compact there exists a subsequence $(u_{n_k})$ such that $\lim_{k\to\infty}Tu_{n_k}$ exists. Since $\spr(T)>0$, $\spr(T_n)\to\spr(T)$ and $T_n\to T$ we deduce that
  \begin{equation*}
    u:=\lim_{k\to\infty}u_{n_k}
    =\lim_{k\to\infty}\frac{1}{\spr(T_{n_k})}
    \bigl(Tu_{n_k}-(T-T_{n_k})u_{n_k}\bigr)
  \end{equation*}
  exists. As $u_n>0$ and $\|u_n\|=1$ for all $n\in\mathbb N$ we have that $u>0$ and $\|u\|=1$ as well. Replacing $n$ by $n_k$ in \eqref{eq:spr-perturbation} we see that $Tu=\spr(T)u$. Hence $u$ is the principal eigenvector for $T$. By the uniqueness of the limit, $u_n\to u$ as $n\to\infty$.
\end{proof}

Now let $(T(t))_{t\geq 0}$ be a positive $C_0$-semigroup with generator $-A$. We say that $(T(t))_{t\geq 0}$ is \emph{irreducible} if for all $0<f\in E$ and $0<\varphi\in E'$ there exists $t>0$ such that $\langle T(t)u,\varphi\rangle>0$. Let
\begin{equation}
  \label{eq:spbA}
  \spb(-A)
  :=\sup\bigl\{\repart\mu\colon\mu\in\sigma(-A)\bigr\}
  \in[-\infty,\infty)
\end{equation}
be the \emph{spectral bound} of $-A$. Then
\begin{equation*}
  \lambda_1(A):=\inf\bigl\{\repart\mu\colon\mu\in\sigma(A)\bigr\}
  =-\spb(-A)
\end{equation*}
and $(\mu I+A)$ is invertible and $(\mu I+A)^{-1})\geq 0$ for all $\mu>-\lambda_1(A)$. The semigroup is irreducible if and only if $(\mu I+A)^{-1}$ is positivity improving for some (equivalently all) $\mu>-\lambda_1(A)$. In particular $(\mu I+A)^{-1}$ is irreducible. Note that
\begin{equation}
  \label{eq:spr-spb}
  \spr\bigl((\mu I+A)^{-1}\bigr)
  =\frac{1}{\mu+\lambda_1(A)}
\end{equation}
for all $\mu>-\lambda_1(A)$. Now assume that $(T(t))_{t\geq 0}$ is irreducible and that $(\mu I+A)^{-1}$ is compact for one (equivalently all) $\mu>-\lambda_1(A)$. Then by \eqref{eq:spr-spb} and de Pagter's Theorem $\lambda_1(A)<\infty$, that is, $\sigma(A)\neq\emptyset$, and $\lambda_1(A)$ is the smallest eigenvalue of $A$. For $u\in E$ we have $(\mu I+A)^{-1}u=\spr\bigl((\mu I+A)^{-1}\bigr)u$ if and only if $u\in D(A)$ and $Au=\lambda_1(A)u$. Thus the principal eigenvector $u$ of $(\mu I+A)^{-1}$ is also the unique $0<u\in D(A)$ such that $Au=\lambda_1(A)u$, $\|u\|=1$. For this reason we call $u$ also the \emph{principal eigenvector of $A$}. In fact, $\lambda_1(A)$ is the only eigenvalue of $A$ having a positive eigenvector. Moreover, $u\gg 0$ as we have seen before.

\begin{proposition}
  \label{prop:A-pev}
  Let $(T(t))_{t\geq 0}$ be a positive irreducible $C_0$-semigroup with generator $-A$. Assume that $A$ has compact resolvent. If $0<u\in D(A)$, $\lambda\in\mathbb R$ and $Au=\lambda u$, then $\lambda=\lambda_1(A)$ and $u/\|u\|$ is the principal eigenvector of $A$. Moreover, if the semigroup is also holomorphic, then there exists $\delta>0$ such that $\repart\lambda\geq\lambda_1(A)+\delta$ for all $\lambda\in\sigma(A)\setminus\{\lambda_1(A)\}$.
\end{proposition}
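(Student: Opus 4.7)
The plan for the first assertion is to pass from $A$ to the resolvent, where the tools assembled before the proposition apply directly. Choose $\mu\in\R$ so large that $\mu>-\lambda_1(A)$ and $\mu+\lambda>0$. From $Au=\lambda u$ one gets $(\mu I+A)u=(\mu+\lambda)u$, and applying the (well-defined, positive, compact, irreducible) operator $(\mu I+A)^{-1}$ yields $(\mu I+A)^{-1}u=\frac{1}{\mu+\lambda}u$. Thus $u>0$ is a positive eigenvector of $(\mu I+A)^{-1}$ corresponding to the positive eigenvalue $1/(\mu+\lambda)$. The discussion immediately preceding the proposition identifies $\spr((\mu I+A)^{-1})=1/(\mu+\lambda_1(A))$ as the unique eigenvalue of $(\mu I+A)^{-1}$ admitting a positive eigenvector, with one-dimensional eigenspace. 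Matching eigenvalues gives $\lambda=\lambda_1(A)$, and the one-dimensionality forces $u/\|u\|$ to be the principal eigenvector.

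For the second assertion, holomorphicity of the semigroup makes $-A$ sectorial, so there exist $\omega\in\R$ and $\theta\in(0,\pi/2)$ such that
\begin{equation*}
  \sigma(A)\subseteq\bigl\{\lambda\in\C\colon|\operatorname{Im}\lambda|\leq\cot\theta\,(\repart\lambda+\omega)\bigr\}.
\end{equation*}
Combined with the discreteness of $\sigma(A)$ (from the compact resolvent), the intersection $\sigma(A)\cap\{\repart\lambda\leq c\}$ is a bounded subset of an isolated set, hence finite, for every $c\in\R$. What remains is to rule out any $\lambda\in\sigma(A)\setminus\{\lambda_1(A)\}$ on the line $\repart\lambda=\lambda_1(A)$.

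This is where positivity and irreducibility enter, through the cyclic structure of the peripheral spectrum of a positive irreducible $C_0$-semigroup: the set $\{\lambda\in\sigma(A)\colon\repart\lambda=\lambda_1(A)\}$ is either $\{\lambda_1(A)\}$ or an unbounded arithmetic progression $\lambda_1(A)+i\alpha\Z$ with some $\alpha>0$ (see, e.g., \cite{meyer:91:bla}). The sector inclusion above bounds $|\operatorname{Im}\lambda|$ on that line, excluding the second alternative. Hence the peripheral spectrum reduces to $\{\lambda_1(A)\}$, and one may take
\begin{equation*}
  \delta:=\min\bigl\{\repart\lambda-\lambda_1(A)\colon\lambda\in\sigma(A)\setminus\{\lambda_1(A)\},\ \repart\lambda\leq\lambda_1(A)+1\bigr\}
\end{equation*}
(or $\delta:=1$ if this finite set is empty) to obtain the asserted spectral gap. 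The main obstacle is the invocation of the cyclic-structure result for peripheral spectra of irreducible positive semigroups; the remaining pieces are bookkeeping with the Krein-Rutman/de Pagter machinery already recalled.
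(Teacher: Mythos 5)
Your argument is correct, but it is not the paper's argument, so a brief comparison is in order. For the first assertion the paper normalises $\lambda_1(A)=0$, takes the strictly positive dual eigenvector $\varphi$ with $A'\varphi=0$ (strict positivity coming from the positivity improving resolvent, as in the proof of Theorem~\ref{thm:spr-monotone}), and gets $\lambda\langle u,\varphi\rangle=\langle u,A'\varphi\rangle=0$, hence $\lambda=\lambda_1(A)$; you instead transfer the eigenvalue equation to the resolvent, $(\mu I+A)^{-1}u=\tfrac{1}{\mu+\lambda}u$, and invoke the fact (quoted in the paper before the proposition, with reference to Meyer--Nieberg) that $\spr\bigl((\mu I+A)^{-1}\bigr)$ is the only eigenvalue of the compact irreducible resolvent with a positive eigenvector, together with \eqref{eq:spr-spb} and the one-dimensionality of the eigenspace. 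Both are legitimate; the paper's duality pairing is essentially self-contained given its Section~2, while yours leans on the cited uniqueness statement but is equally short. For the second assertion the paper simply cites \cite[Corollary~C-III.3.17]{nagel:86:osp} after observing that holomorphic semigroups are norm-continuous; you essentially reprove that corollary: sectoriality of $-A$ plus discreteness of the spectrum (compact resolvent) confines and discretises $\sigma(A)$ in left half-planes, and the additive cyclicity of the boundary spectrum of an irreducible positive semigroup whose boundary spectral values are poles forces the peripheral spectrum to be $\{\lambda_1(A)\}$, whence the gap $\delta$. The only slip is bibliographical: the cyclicity theorem you invoke is a semigroup result (boundary spectrum of the generator additively cyclic), found in \cite{nagel:86:osp}, Section C-III (the very source the paper cites), not in \cite{meyer:91:bla}, which treats the multiplicative cyclicity of the peripheral spectrum of a single irreducible operator; with that reference corrected your argument stands, at the cost of redoing work the paper delegates to the literature.
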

\begin{proof}
  By a translation and a rescaling we may assume that $\lambda_1(A)=0$ and thus ${\spr\bigl((I+A)^{-1}\bigr)}=1$. By the Krein-Rutman Theorem there exists $0<\varphi\in D(A')$ such that $A'\varphi=0$. Since $(I+A)^{-1}$ is positivity improving $\varphi$ is strictly positive, see the proof of Theorem~\ref{thm:spr-monotone}. Now let $0<u\in D(A)$, $\lambda\in\mathbb R$ such that $Au=\lambda u$. Then
  \begin{equation*}
    \lambda\langle u,\varphi\rangle
    =\langle Au,\varphi\rangle
    =\langle u,A'\varphi\rangle
    =0.
  \end{equation*}
  As $\langle u,\varphi\rangle>0$ it follows that $\lambda=0=\lambda_1(A)$. Since holomorphic semigroups are norm-continuous the last assertion on the strict dominance follows from \cite[Corollary~C-III.3.17]{nagel:86:osp}.
\end{proof}

\section{Perturbation by multiplication operators}
\label{sec:perturbation-multipliers}
Let $\Omega$ be an open set in $\mathbb R^N$. We assume that $E=L^p(\Omega)$ for some $1\leq p<\infty$. We investigate the effect of a perturbation of the generator of a $C_0$-semigroup on $E$ by a bounded multiplication operator. We start by proving a strong comparison result for principal eigenvalues analogous to the classical one that is often obtained using the maximum principle, see for instance \cite[Lemma~5.2]{du:06:ost} or \cite[Proposition~8.3]{lopez:13:lso}, but here we use abstract methods to allow very weak regularity assumptions.

\begin{proposition}
  \label{prop:ev-monotone}
  Let $\Omega\subseteq\mathbb R^N$ be an open set and let $m,m_1,m_2\in L^\infty(\Omega)$.  Let $(T(t))_{t\geq 0}$ be a positive $C_0$-semigroup on $E$ with generator $-A$.  Then the following assertions are true:
  \begin{enumerate}[\upshape (i)]
  \item The operator $-(A+m)$ generates a positive $C_0$-semigroup $(T_m(t))_{t\geq 0}$ on $E$ satisfying
    \begin{equation}
      \label{eq:semigroup-bounds}
      e^{-\omega t}T(t)\leq T_m(t)\leq e^{\omega t}T(t)
    \end{equation}
    for all $t\geq 0$, where $\omega:=\|m\|_\infty$. Moreover,
    \begin{equation}
      \label{eq:semigroup-comparison}
      m_2\leq m_1\implies T_{m_1}(t)\leq T_{m_2}(t)
      \quad\text{for all $t\geq 0$}
    \end{equation}
  \item If $(T(t))_{t\geq 0}$ is irreducible and $A$ has compact resolvent, then $(T_m(t))_{t\geq 0}$ is irreducible and $-(A+m)$ has compact resolvent. Moreover, $\lambda_1(m):=\lambda_1(A+m)<\infty$ and
    \begin{equation}
      \label{eq:eigenvalue-comparison}
      m_1\leq m_2\implies \lambda_1(m_1)\leq \lambda_1(m_2)
    \end{equation}
    with equality if and only if $m_1=m_2$ almost everywhere.
  \end{enumerate}
\end{proposition}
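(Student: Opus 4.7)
For part~(i), I would start by observing that multiplication by $m$ is bounded on $E$ with norm $\|m\|_\infty$, so Phillips' bounded perturbation theorem shows that $-(A+m)$ generates a $C_0$-semigroup $(T_m(t))_{t\geq 0}$. To read off both positivity and the sandwich~\eqref{eq:semigroup-bounds} simultaneously, I would apply the Lie--Trotter product formula
\begin{equation*}
  T_m(t) = \lim_{n\to\infty}\bigl(T(t/n)e^{-tm/n}\bigr)^n,
\end{equation*}
which holds in the strong operator topology for bounded perturbations. Each $T(t/n)$ and each multiplication by $e^{-tm/n}$ is positive, so $T_m(t)\geq 0$. Since $|m|\leq\omega$, one has $e^{-\omega t/n}\leq e^{-tm/n}\leq e^{\omega t/n}$ pointwise, and composing with the positive operator $T(t/n)$ gives
\begin{equation*}
  e^{-\omega t/n}T(t/n)\leq T(t/n)e^{-tm/n}\leq e^{\omega t/n}T(t/n).
\end{equation*}
The elementary lemma that in a Banach lattice $0\leq A_i\leq B_i$ implies $A_1\cdots A_n\leq B_1\cdots B_n$, combined with $T(t/n)^n=T(t)$, yields $e^{-\omega t}T(t)\leq(T(t/n)e^{-tm/n})^n\leq e^{\omega t}T(t)$ for every $n$, and passing to the Trotter limit proves~\eqref{eq:semigroup-bounds}. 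The comparison~\eqref{eq:semigroup-comparison} follows from the same scheme: if $m_2\leq m_1$, then $e^{-tm_1/n}\leq e^{-tm_2/n}$ pointwise, so $T(t/n)e^{-tm_1/n}\leq T(t/n)e^{-tm_2/n}$, and the product inequality together with the Trotter limit give $T_{m_1}(t)\leq T_{m_2}(t)$.

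For part~(ii), irreducibility of $(T_m(t))$ falls out of the lower bound in~\eqref{eq:semigroup-bounds}: for $0<f\in E$ and $0<\varphi\in E'$, irreducibility of $(T(t))$ provides $t>0$ with $\langle T(t)f,\varphi\rangle>0$, and then $\langle T_m(t)f,\varphi\rangle\geq e^{-\omega t}\langle T(t)f,\varphi\rangle>0$. For compactness of $(\mu I+A+m)^{-1}$ I would choose $\mu$ large enough that $\|m(\mu I+A)^{-1}\|<1$ and write
\begin{equation*}
  (\mu I+A+m)^{-1}=(\mu I+A)^{-1}\bigl(I+m(\mu I+A)^{-1}\bigr)^{-1},
\end{equation*}
the composition of a compact with a bounded operator; the resolvent identity then propagates compactness to all admissible $\mu$. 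Finiteness of $\lambda_1(m)$ is immediate from de Pagter's theorem applied to $(\mu I+A+m)^{-1}$ through the identity~\eqref{eq:spr-spb}, as in the discussion preceding Proposition~\ref{prop:A-pev}.

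The core of the proposition is the eigenvalue comparison, and this is the one place where Theorem~\ref{thm:spr-monotone} does genuine work. Given $m_1\leq m_2$, applying~\eqref{eq:semigroup-comparison} with the roles of $m_1$ and $m_2$ swapped gives $T_{m_2}(t)\leq T_{m_1}(t)$, and for sufficiently large $\mu$ the Laplace transform yields
\begin{equation*}
  0\leq(\mu I+A+m_2)^{-1}\leq(\mu I+A+m_1)^{-1}.
\end{equation*}
Setting $S:=(\mu I+A+m_2)^{-1}$ and $T:=(\mu I+A+m_1)^{-1}$, this is exactly the setting of Theorem~\ref{thm:spr-monotone}: $S$ is irreducible (because $(T_{m_2}(t))$ is) and $T$ is compact. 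Theorem~\ref{thm:spr-monotone} then gives $\spr(S)\leq\spr(T)$ with equality iff $S=T$, and rewriting through~\eqref{eq:spr-spb} turns this into $\lambda_1(m_1)\leq\lambda_1(m_2)$, with equality forcing the two resolvents to coincide and hence $m_1=m_2$ almost everywhere. The only genuine obstacle is the equality case: without the strict monotonicity of Theorem~\ref{thm:spr-monotone} one would need a strong maximum principle, which is precisely what the abstract framework of this paper is designed to avoid.
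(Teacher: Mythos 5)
Your argument is correct and follows essentially the same route as the paper: Trotter's product formula for \eqref{eq:semigroup-bounds} and \eqref{eq:semigroup-comparison}, the Laplace transform to pass to the resolvent comparison, de Pagter's theorem via \eqref{eq:spr-spb} for $\lambda_1(m)<\infty$, and Theorem~\ref{thm:spr-monotone} for the strict monotonicity in the equality case. The only (harmless) variation is your Neumann-series factorisation $(\mu I+A+m)^{-1}=(\mu I+A)^{-1}\bigl(I+m(\mu I+A)^{-1}\bigr)^{-1}$ for large $\mu$, where the paper uses the perturbation identity $(\mu I+A+m)^{-1}=\bigl(I-m(\mu I+A+m)^{-1}\bigr)(\mu I+A)^{-1}$; both exhibit the resolvent as a compact operator composed with a bounded one.
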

\begin{proof}
  Assertions \eqref{eq:semigroup-bounds} and \eqref{eq:semigroup-comparison} follow from Trotter's formula
  \begin{equation*}
    T_m(t)f=\lim_{n\to\infty}\left(T(t/n)e^{-mt/n}\right)^nf,
  \end{equation*}
  see for instance \cite[Corollary~III.5.8]{engel:00:ops}.  By means of the Laplace transform representation of the resolvent of a generator we deduce from \eqref{eq:semigroup-comparison} that
  \begin{equation*}
    0\leq (\mu I+A+m_2)^{-1}\leq(\mu I+A+m_1)^{-1}
  \end{equation*}
  if $\mu$ is large enough. In particular this implies that
  \begin{equation}
    \label{eq:spr-comparison}
    \spr((\mu I+A+m_2)^{-1})\leq\spr((\mu I+A+m_1)^{-1}).
  \end{equation}
  If we assume that $(T(t))_{t\geq 0}$ is irreducible and that $A$ has compact resolvent, then \eqref{eq:semigroup-bounds} implies that $(T_m(t))_{t\geq 0}$ is irreducible. Also, for large enough $\mu$ we have
  \begin{equation*}
    (\mu I+A+m)^{-1}=\bigl(I-m(\mu I+A+m)^{-1}\bigr)(\mu I+A)^{-1}
  \end{equation*}
  and therefore $A+m$ has compact resolvent. Formula \eqref{eq:spr-spb} shows that \eqref{eq:eigenvalue-comparison} and \eqref{eq:spr-comparison} are equivalent. Since $T_m$ is irreducible, $(\mu I+A+m)^{-1}$ is positivity improving and hence irreducible. If equality holds in \eqref{eq:eigenvalue-comparison} and hence in \eqref{eq:spr-comparison}, then Theorem~\ref{thm:spr-monotone} implies that $(\mu I+A+m_2)^{-1} =(\mu I+A+m_1)^{-1}$ and hence $A+m_1=A+m_2$, that is, $m_1=m_2$ almost everywhere.
\end{proof}

The above proposition tells us that the function $m\mapsto \lambda_1(m)$ is increasing as a function of $m$. We want to show that it is also continuous. The following result seems to be new under our weak hypotheses. We will only assume weak$^*$-convergence of $m$, not uniform convergence such as in \cite[Lemma~5.2]{du:06:ost} and \cite[Corollary~8.1]{lopez:13:lso} or monotonicity as in \cite[Lemma~2.1]{daners:18:gdg}.

\begin{theorem}
  \label{thm:ev-continuous}
  Let $E=L^p(\Omega)$, where $1\leq p<\infty$ and $\Omega\subseteq\mathbb R^N$ is an open set. Assume that $-A$ generates a positive, irreducible $C_0$-semigroup $(T(t))_{t\geq 0}$ on $E$ and that $A$ has compact resolvent.  Suppose that $0\leq m_n,m\in L^\infty(\Omega)$ such that $m_n\stackrel{w^*}{\rightharpoonup} m$ weak$^*$ in $L^\infty(\Omega)$. Then the following assertions are true:
  \begin{enumerate}[\upshape (i)]
  \item If $\omega\in\varrho(-(A+m))$, then there exists $n_0\in\mathbb N$ such that $\omega\in\varrho(-(A+m_n))$ for all $n\geq n_0$ and $(\omega I+ A+m_n)^{-1}\to (\omega I+A+m)^{-1}$ in $\mathcal L(E)$.
  \item $\lambda_1(m_n)\to\lambda_1(m)$ and if $u_n$ and $u$ are the
    corresponding principal eigenvectors, then $u_n\to u$ in $E$.
  \end{enumerate}
\end{theorem}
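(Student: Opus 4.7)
The plan is to establish part~(i) first and then deduce part~(ii) by invoking Proposition~\ref{prop:spr-continuous}: by Proposition~\ref{prop:ev-monotone} each resolvent $R_n := (\omega I + A + m_n)^{-1}$ and $R := (\omega I + A + m)^{-1}$ is a positive, irreducible, compact operator on $E$, so the norm convergence $R_n \to R$ from part~(i) gives both $\spr(R_n) \to \spr(R)$ and norm convergence of the principal eigenvectors; formula~\eqref{eq:spr-spb} translates the first of these into $\lambda_1(m_n) \to \lambda_1(m)$, and the principal eigenvector of $R_n$ coincides with that of $A + m_n$.

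The foundation of part~(i) is the elementary observation that $m_n \stackrel{w^*}{\rightharpoonup} m$ in $L^\infty(\Omega)$ implies $(m_n - m) h \rightharpoonup 0$ weakly in $L^p(\Omega)$ for each fixed $h \in L^p(\Omega)$: for $\phi \in L^{p'}(\Omega)$ one has $\langle (m_n-m)h, \phi\rangle = \int_\Omega (m_n-m)(h\phi)\,dx \to 0$ because $h\phi \in L^1(\Omega)$. The uniform boundedness principle gives $\sup_n \|m_n\|_\infty < \infty$. The crucial step is then to show that $R(m_n-m)R \to 0$ in operator norm. Since $R$ is compact it sends weakly convergent sequences to norm-convergent ones, so $R(m_n - m) R f \to 0$ in norm for each fixed $f \in E$. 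The operators $g \mapsto R(m_n-m)g$ are uniformly Lipschitz with constant $\|R\|\sup_n\|m_n-m\|_\infty$, so a standard $\varepsilon$-net argument on the compact set $\overline{R(B)}$, where $B$ is the closed unit ball of $E$, upgrades this pointwise convergence to $\|R(m_n-m)R\| \to 0$.

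Next I would show that $I + R(m_n - m)$ is invertible with uniformly bounded inverse for all sufficiently large $n$. Arguing by contradiction, one extracts unit vectors $v_n$ with $[I+R(m_n-m)]v_n \to 0$ in norm. The rearrangement $v_n = -R(m_n-m)v_n + o(1)$, the boundedness of $(m_n-m)v_n$, and the compactness of $R$ yield a subsequence $v_{n_k} \to v^*$ in norm with $\|v^*\| = 1$. The splitting $(m_{n_k}-m)v_{n_k} = (m_{n_k}-m)(v_{n_k}-v^*) + (m_{n_k}-m)v^*$ shows that $(m_{n_k}-m)v_{n_k} \rightharpoonup 0$ weakly (first summand norm-null from the Lipschitz bound, second weakly null by the opening observation). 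Applying the compact operator $R$ gives $R(m_{n_k}-m)v_{n_k} \to 0$ in norm, so $v^* = 0$, a contradiction. The algebraic identity $R(\omega I + A + m_n) = I + R(m_n-m)$ on $D(A)$, an immediate consequence of $R(\omega I + A + m) = I$, then yields $\omega \in \varrho(-(A+m_n))$ and $R_n = [I + R(m_n-m)]^{-1} R$ for all large $n$, whence
\[
R_n - R = -[I + R(m_n-m)]^{-1}\, R(m_n-m)\, R,
\]
whose norm tends to zero by combining the uniform bound on the prefactor with $\|R(m_n-m)R\| \to 0$.

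The main obstacle is precisely the uniform invertibility of $I + R(m_n-m)$: because weak$^*$-convergence of $m_n$ is strictly weaker than uniform $L^\infty$-convergence, $R(m_n-m)$ need not tend to zero in operator norm, so the usual Neumann-series perturbation argument is unavailable; the contradiction above instead exploits the interplay between the compactness of $R$, which turns weakly convergent sequences into norm-convergent ones, and the weak$^*$-hypothesis on $m_n$. Once this invertibility is in place, the remainder is essentially algebraic and part~(ii) follows from the abstract Proposition~\ref{prop:spr-continuous}.
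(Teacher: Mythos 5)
Your argument is correct, and although its core coincides with the paper's key estimate --- namely using the compactness of $R=(\omega I+A+m)^{-1}$ to convert $m_n\stackrel{w^*}{\rightharpoonup}m$ into $\|R(m_n-m)R\|\to 0$, cf.\ \eqref{eq:rcr-zero} --- the surrounding architecture is genuinely different. The paper works at one fixed large real $\omega$, obtains the uniform bound $\|(\omega I+A+m_n)^{-1}\|\leq M$ from the semigroup sandwich \eqref{eq:semigroup-bounds} via the Laplace transform, uses the iterated identity $R_n=R+RC_nR+R_nC_n(RC_nR)$ with $C_n$ the multiplication by $m_n-m$, and then transfers assertion (i) to an arbitrary $\omega\in\varrho(-(A+m))$ by citing Kato's generalized-convergence theorem; its proof of \eqref{eq:rcr-zero} dualizes and uses compactness of both $\overline{RB}$ and $\overline{R'B'}$. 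You instead prove uniform invertibility of $I+R(m_n-m)$ by a compactness/contradiction argument and write $R_n-R=-[I+R(m_n-m)]^{-1}R(m_n-m)R$; this is self-contained, needs no semigroup estimates, works directly at every $\omega\in\varrho(-(A+m))$ (so Kato's theorem is not needed), and your proof of $\|R(m_n-m)R\|\to 0$ uses only the compactness of $\overline{RB}$ together with equiboundedness. Two details you should make explicit: in the contradiction step, passing from non-invertibility of $I+R(m_n-m)$ to a unit (near-)kernel vector, equivalently deducing surjectivity from your lower bound, requires the Riesz--Schauder/Fredholm alternative, which is available because $R(m_n-m)$ is compact; and for part (ii) you should fix a real $\omega>-\lambda_1(0)$ --- legitimate since $0\leq m_n,m$ gives $\lambda_1(m_n)\geq\lambda_1(0)$ by \eqref{eq:eigenvalue-comparison} --- so that $R_n$ and $R$ are indeed positive, irreducible and compact as required by Proposition~\ref{prop:spr-continuous}; with these additions the deduction of (ii) is exactly as in the paper.
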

\begin{proof}
  Since $m_n$ is weak$^*$-convergent in $L^\infty$ it follows that the sequence is bounded in $L^\infty$. By \eqref{eq:semigroup-bounds} there exists $M\geq 1$ and $\omega_0\in\mathbb R$ such that
  \begin{equation*}
    \|T_{m_n}(t)\|\leq Me^{\omega_0t}
    \qquad\text{and}\qquad
    \|T(t)\|\leq Me^{\omega_0t}
  \end{equation*}
  for all $t\geq 0$. Fix $\omega\geq\omega_0+1$. Then by the Laplace transformation representation of the resolvent
  \begin{equation*}
    \|(\omega I+A+m_n)^{-1}\|
    =\Bigl\|\int_0^\infty e^{-\omega t}T_{m_n}(t)\,dt\Bigr\|
    \leq M\int_0^\infty e^{-(\omega_0+1)t}e^{\omega_0t}\,dt
    =M
  \end{equation*}
  for all $n\in\mathbb N$. Keeping the same $\omega$ as before we set $R_n:=(\omega I+A+m_n)^{-1}$ and $R:=(\omega I+A+m)^{-1}$. We show that $R_n\to R$. To see this note that
  \begin{equation*}
    R_n-R
    =R_n(m_n-m)R
    =R_nC_nR
  \end{equation*}
  if we define $C_n\in\mathcal L(E)$ by $C_nf:=(m_n-m)f$ for all $f\in E$. Hence
  \begin{equation*}
    R_n=R+R_nC_nR
    =R+(R+R_nC_nR)C_nR
    =R+RC_nR+R_nC_n(RC_nR).
  \end{equation*}
  As $R_nC_n$ is uniformly bounded it is sufficient to show that
  \begin{equation}
    \label{eq:rcr-zero}
    \lim_{n\to\infty}\|RC_nR\|_{\mathcal L(E)}=0.
  \end{equation}
  As $m_n-m\stackrel{w^*}{\rightharpoonup} 0$ in $L^\infty$ we have $\langle g,C_nf\rangle\to 0$ for all $f\in L^p(\Omega)$ and $g\in E'=L^{p'}(\Omega)$, where $1/p+1/p'=1$ (with $p'=\infty$ if $p=1$). Since $R$ is compact, also $R'$ is compact. Denote by $B$ and $B'$ the unit balls in $E$ and $E'$. Then, $K:=\overline{RB}$ and $K':=\overline{R'B'}$ are compact in $E$ and $E'$ respectively. It follows from the equi-continuity of the family $(C_n)$ that for $f\in B$,
  \begin{equation*}
    \|RC_nRf\|
    =\sup_{g\in B'}|\langle C_nRf,R'g\rangle|
    \leq\sup_{u\in K, v\in K'}|\langle C_nu,v\rangle|\to 0
  \end{equation*}
  since $K$ and $K'$ are compact. Hence, \eqref{eq:rcr-zero} follows. Taking into account \cite[Theorem~IV.2.25]{kato:76:ptl} this proves (i). Now (ii) follows from Proposition~\ref{prop:spr-continuous}.
\end{proof}

We note that the above theorem in fact implies that $u_n\to u$ in $D(A^k)$ for all $k\in\mathbb N$, where we take the graph norm on $D(A^k)$. Indeed, $A^ku_n=\lambda_1(m_n)^ku_n\to\lambda_1(m)^ku=A^ku$ in $E$ as $n\to\infty$ for all $k\in\mathbb N$. In many applications to boundary value problems on sufficiently smooth domains $D(A)$ is a closed subspace of $W^{2,p}(\Omega)$, and hence the principal eigenvectors converge in $W^{2.p}(\Omega)$, not just in $L^p(\Omega)$.

\section{Locally smoothing semigroups}
\label{sec:smoothing-semigroups}
Throughout this section we let $\Omega\subseteq\mathbb R^N$ be an open set, $1\leq p<\infty$ and $E:=L^p(\Omega)$.  We will impose the following regularity property. That property is motivated by the general principle in the theory of parabolic partial differential equations that solutions exhibit local regularity properties, but not necessarily up to the boundary.

\begin{definition}[Locally smoothing semigroup]
  \label{def:locally-smoothing}
  A $C_0$-semigroup $(T(t))_{t\geq 0}$ on $E$ is called \emph{locally smoothing} if
  \begin{equation}
    \label{eq:smoothing-semigroup}
    T(t)E\subseteq C(\Omega)\quad\text{for all $t>0$}
  \end{equation}
  and for every $x\in\Omega$ there exist $w\in E$ and $s>0$ such that
  \begin{equation}
    \label{eq:zero-regular}
    [T(s)w](x)\neq 0.
  \end{equation}
\end{definition}

We note that if $(T(t))_{t\geq 0}$ is a positive semigroup, then we can choose the function $w$ in \eqref{eq:zero-regular} such that $w>0$ by replacing $w$ by $|w|$. We next show that the local smoothing property and ultra-contractivity of a semigroup is preserved under a perturbation by a bounded measurable potential.

\begin{remark}
  Condition \eqref{eq:zero-regular} is a kind of continuity property at $t=0$. Examples of locally smoothing semigroups $(T(t))_{t\geq 0}$ (for example on Lipschitz domains) occur if one of the following conditions are satisfied, see \cite{arendt:20:spp}:

  \begin{enumerate}[(a)]
  \item $\Omega$ is bounded, $T(t)E\subseteq C(\bar\Omega)$
    and $(T(t)|_{C(\bar\Omega)})_{t\geq 0}$ is a $C_0$-semigroup.

  \item $\Omega$ is bounded, $T(t)E\subseteq C_0(\Omega)$
    and $(T(t)|_{C_0(\Omega)})_{t\geq 0}$ is a $C_0$-semigroup. Here,
    \begin{equation}
      \label{eq:C0-space}
      C_0(\Omega)=\{u\in C(\bar\Omega)\colon u|_{\partial\Omega}=0\}.
    \end{equation}
  \end{enumerate}
  In Sections~\ref{sec:examples-laplacian} and~\ref{sec:elliptic-divergence-form} we will encounter examples of locally smoothing semigroups not satisfying (a) or (b).
\end{remark}

For a semigroup to satisfy \eqref{eq:bounded-smoothing} we also recall the following standard definition.

\begin{definition}[Ultra-contractive semigroup]
  \label{def:ultra-contractive}
  A $C_0$-semigroup $(T(t))_{t\geq 0}$ is called \emph{ultra-contractive} if $T(t)E\subseteq L^\infty(\Omega)$ for all $t>0$.
\end{definition}

We refer to \cite[Chapter~2]{davies:89:hks} or \cite[Section~7.3]{arendt:04:see} for further properties associated with ultra-contractive semigroups. For us the following two properties are useful.

\begin{theorem}
  \label{thm:m-locally-smoothing}
  Let $-A$ be the generator of a positive $C_0$-semigroup $(T(t))_{t\geq 0}$ on $E$ and suppose that $m\in L^\infty(\Omega)$. Then the following assertions hold.
  \begin{enumerate}[\upshape (i)]
  \item If $(T(t))_{t\geq 0}$ is ultra-contractive, then also $(T_m(t))_{t\geq 0}$ is ultra-contractive.
  \item If $(T(t))_{t\geq 0}$ is locally smoothing, then also $(T_m(t))_{t\geq 0}$ is locally smoothing.
  \end{enumerate}
\end{theorem}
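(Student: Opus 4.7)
Part (i) follows directly from the order sandwich \eqref{eq:semigroup-bounds} in Proposition~\ref{prop:ev-monotone}. For any $f \in E$ and $\omega = \|m\|_\infty$, we have $|T_m(t)f| \leq T_m(t)|f| \leq e^{\omega t} T(t)|f|$ pointwise a.e.\ Combined with the closed graph theorem, which upgrades the inclusion $T(t)E \subseteq L^\infty(\Omega)$ to continuity of $T(t) \colon E \to L^\infty(\Omega)$, this yields $\|T_m(t)f\|_\infty \leq e^{\omega t}\|T(t)\|_{\mathcal L(E, L^\infty)}\|f\|_E$, so $T_m(t)E \subseteq L^\infty(\Omega)$.

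For (ii) there are two things to verify. The non-triviality condition \eqref{eq:zero-regular} for $T_m$ follows cheaply from the lower sandwich in \eqref{eq:semigroup-bounds}: given $x \in \Omega$, pick $0 \leq w \in E$ and $s > 0$ from the local smoothing of $T$ so that $[T(s)w](x) > 0$, which is possible after replacing $w$ by $|w|$. Once $T_m(s)w \in C(\Omega)$ has been established, the a.e.\ inequality $T_m(s)w \geq e^{-\omega s} T(s) w$ becomes pointwise, giving $[T_m(s)w](x) \geq e^{-\omega s}[T(s)w](x) > 0$.

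The main task is therefore to show $T_m(t)E \subseteq C(\Omega)$ for every $t > 0$. The starting point is the Duhamel formula
\[T_m(t)f = T(t)f - \int_0^t T(t-s)\,m\,T_m(s)f\,ds,\]
obtained by differentiating $s \mapsto T(t-s)T_m(s)f$, which represents $T_m(t)f$ as $T(t)f$ (already in $C(\Omega)$) minus an $E$-valued Bochner integral. The plan is to iterate this into the Dyson--Phillips series $T_m(t) = \sum_{n \geq 0}(-1)^n Q_n(t)$ with
\[Q_n(t)f = \int_{\Delta_n(t)} T(t-s_1)\,m\,T(s_1-s_2)\,m \cdots m\,T(s_n)f\,d\vec s,\]
where $\Delta_n(t)=\{0<s_n<\cdots<s_1<t\}$, and to exploit the factorization $T(t-s_1) = T(\eta)T(t-s_1-\eta)$ on the subregion $\{s_1 \leq t-\eta\}$ to pull a smoothing factor $T(\eta) \colon E \to C(\Omega)$ outside the integration, so that the bulk contribution to each $Q_n(t)f$ lies in $T(\eta) E \subseteq C(\Omega)$.

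The hard part will be controlling the boundary layer $\{s_1 > t-\eta\}$ and establishing convergence of the full series in the space $C(\Omega)$, since the usual Dyson--Phillips estimate $\|Q_n(t)\|_{\mathcal L(E)} \leq C (t\|m\|_\infty)^n/n!$ only controls $L^p$-norms, whereas $\|T(r)\|_{E \to C(K)}$ may blow up as $r \to 0^+$. The intended way around this is to reduce to small-time questions via the semigroup identity $T_m(t) = T_m(t/2)T_m(t/2)$ and to combine the above factorization with the alternative Duhamel form $T_m(t)f = T(t)f - \int_0^t T_m(t-s)\,m\,T(s)f\,ds$, which isolates $T(s)$ inside the integrand and so swaps which endpoint of the $s$-integration carries the singularity; choosing $\eta$ independently of $n$ should then yield bounds of the form $\|Q_n(t)f\|_{C(K)} \leq \|T(\eta)\|_{E\to C(K)} \cdot C_n\|f\|_E$ with $C_n$ summable, giving the required convergence in $C(K)$ for every compact $K \subset \Omega$.
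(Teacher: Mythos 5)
Parts (i) and the verification of \eqref{eq:zero-regular} for $T_m$ are correct and coincide with the paper's argument. The gap is in the core of (ii): you correctly identify that the obstruction is the possible blow-up of $\|T(r)\|_{E\to C(K)}$ as $r\downarrow 0$, but your proposed resolution is only a programme, and as outlined it does not close. In the Dyson--Phillips term $Q_n(t)f$ the leftmost factor is $T(t-s_1)$ with $t-s_1$ arbitrarily small on the boundary layer $\{s_1>t-\eta\}$, so the factorization $T(t-s_1)=T(\eta)T(t-s_1-\eta)$ only covers part of the simplex; the leftover piece is small in the $E$-norm (of order $\eta$), but $E$-smallness gives no information whatsoever about membership in, or convergence in, $C(K)$, so you cannot pass continuity to the limit this way. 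The ``alternative Duhamel form'' does not rescue this: there the outer factor is $T_m(t-s)$, whose continuity-improving property is precisely what you are trying to prove, so the singularity has not been swapped onto a harmless factor. The claimed bounds $\|Q_n(t)f\|_{C(K)}\leq\|T(\eta)\|_{E\to C(K)}C_n\|f\|_E$ with summable $C_n$ are therefore not established, and the whole series argument remains conjectural.

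The paper avoids all of this with a single application of the Duhamel formula plus a positivity/domination trick that your proposal misses. For $0<s<t$ one has, using $|T_m(s)u|\leq T_m(s)|u|\leq e^{\omega s}T(s)|u|$ with $\omega=\|m\|_\infty$ and the semigroup law $T(t-s)T(s)=T(t)$,
\begin{equation*}
  \bigl|T(t-s)\bigl(m\,T_m(s)u\bigr)\bigr|
  \leq \|m\|_\infty e^{\omega s}\,T(t-s)T(s)|u|
  = \|m\|_\infty e^{\omega s}\,T(t)|u|,
\end{equation*}
so the integrand is dominated, uniformly in $s$, by the single fixed continuous function $e^{\omega t}\|m\|_\infty T(t)|u|$. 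Hence on every bounded open $\Omega_0$ with $\bar\Omega_0\subseteq\Omega$ the integrand is a $C(\bar\Omega_0)$-valued function (each $T(t-s)$ maps into $C(\Omega)$ since $t-s>0$) that is bounded in the sup-norm on $\bar\Omega_0$; after checking strong measurability via Pettis' theorem (separability of $C(\bar\Omega_0)$ and weak measurability against $L^{p'}(\Omega_0)$), it is Bochner integrable in $C(\bar\Omega_0)$, so the integral, and therefore $T_m(t)u$, is continuous on $\Omega_0$, and $\Omega_0$ was arbitrary. The point is that although $\|T(t-s)\|_{E\to C(\bar\Omega_0)}$ may indeed blow up as $s\uparrow t$, the particular vector $m\,T_m(s)u$ to which it is applied is itself dominated by $e^{\omega s}\|m\|_\infty T(s)|u|$, and the composition collapses to $T(t)|u|$; no series, no splitting of the time simplex, and no small-time operator norms are needed. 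I recommend you replace the Dyson--Phillips plan by this one-step domination argument.
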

\begin{proof}
  (i) This immediately follows from \eqref{eq:semigroup-bounds}.

  (ii) We know from Proposition~\ref{prop:ev-monotone} that $-(A+m)$ generates a positive $C_0$-semigroup on $E$. We also have
  \begin{equation}
    \label{eq:var-constants}
    T_m(t)u=T(t)u+\int_0^tT(t-s)\bigl(mT_m(s)u\bigr)\,ds
  \end{equation}
  for all $u\in E$; see \cite[Corollary~III.1.7]{engel:00:ops}. The integral is defined as a Bochner- (or Riemann-) integral with values in $E$. By assumption $T(t)u\in C(\Omega)$. Hence, in order to show that $T_m(t)u\in C(\Omega)$ it is sufficient to show that the integral on the right hand side of \eqref{eq:var-constants} is in $C(\Omega)$. We note that a function is continuous on $\Omega$ if and only if it is continuous on every bounded open set $\Omega_0\subseteq\Omega$ with $\bar\Omega_0\subseteq\Omega$. Let $\Omega_0$ be such a set. As the restriction operator $r_{\bar\Omega_0}(v):= v|_{\bar\Omega_0}$ is bounded and linear from $L_p(\Omega)$ to $L_p(\Omega_0)$ we have
  \begin{equation*}
    r_{\bar\Omega_0}\Bigl(\int_0^tT(t-s)\bigl(mT_m(s)u\bigr)\,ds\Bigr)
    =\int_0^tr_{\bar\Omega_0}\bigl(T(t-s)\bigl(mT_m(s)u\bigr)\bigr)\,ds;
  \end{equation*}
  as Bochner integrals; see \cite[Proposition~1.1.6]{arendt:11:vlt}. Hence it is sufficient to show that the right hand side also exists as a Bochner integral in $C(\bar\Omega_0)$. Define $F\colon (0,t)\to C(\bar\Omega_0)$ by
  \begin{equation*}
    F(s):=r_{\bar\Omega_0}\bigl(T(t-s)\bigl(mT_m(s)u\bigr)\bigr).
  \end{equation*}
  To see that $F$ is Bochner integrable note that if $g\in L_{p'}(\bar\Omega_0)=\bigl(L_p(\bar\Omega_0)\bigr)'$, then $s\to\langle F(s),g\rangle$ is measurable. Hence $\langle F(\cdot),g\rangle$ is measurable for all $g$ in a separating subspace of $C(\bar\Omega_0)'$.  We also note that $C(\bar\Omega_0)$ is separable. Hence, by a generalised version of Pettis' theorem this implies that $F$ is measurable; see \cite[Corollary~1.1.3]{arendt:11:vlt}. In particular, $s\mapsto \|F(s)|_{\bar\Omega_0}\|_\infty$ is measurable and
  \begin{multline}
    \label{eq:pointwise-bound}
    |T_m(t-s)(mT(s)u)|
    \leq T_m(t-s)\bigl(|m|T(s)|u|\bigr)\\
    \leq e^{\omega(t-s)}T(t-s)\bigl(\|m\|_\infty T(s)|u|\bigr)
    =e^{\omega t}\|m\|_\infty T(t)|u|.
  \end{multline}
  Thus $\|F(s)\|_\infty\leq e^{\omega t}\|m\|_\infty\|T(t)|u|\|_\infty$ for all $s\in(0,t)$, where the supremum norm is taken over $\bar\Omega_0$. Therefore, $F\in L^1\bigl((0,t),C(\bar\Omega_0)\bigr)$. This shows that $r_{\bar\Omega_0}\bigl(T_m(t)E\bigr)\subseteq C(\bar\Omega_0)$ for all $t>0$. As the argument works for every choice of $\Omega_0$ we deduce that $T_m(t)u\in C(\Omega)$.

  For the last claim note that by \eqref{eq:semigroup-bounds} we have $T_m(s)w(x)\geq e^{-\omega s}T(s)w(x)>0$ if we choose $w>0$ as in \eqref{eq:zero-regular}. Hence $(T_m(t))_{t\geq 0}$ is locally smoothing.
\end{proof}

\begin{remark}
  There are situations, where $T(t)E\subseteq C(\bar\Omega)$. In that case, in the proof of the above theorem we can choose $\Omega_0=\Omega$ and conclude that $T_m(t)E\subseteq C(\bar\Omega)$.
\end{remark}

We next show that any ultra-contractive semigroup on a bounded domain is compact.

\begin{proposition}
  \label{prop:bounded-compact}
  Let $1\leq p<\infty$ and suppose $\Omega\subseteq\mathbb R^N$ is bounded. Suppose that $-A$ is the generator of an ultra-contractive $C_0$-semigroup $(T(t))_{t>0}$ on $L^p(\Omega)$. Then $A$ has compact resolvent.
\end{proposition}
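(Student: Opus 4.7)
The plan is to establish compactness of every $T(s)$, $s>0$, as an operator on $L^p(\Omega)$, and then deduce compactness of the resolvent by approximating it in operator norm via the Laplace representation. To prepare the ground, I first apply the closed graph theorem to $T(t)\colon L^p(\Omega)\to L^\infty(\Omega)$: by ultracontractivity this map is well defined, and since $L^\infty(\Omega)\hookrightarrow L^p(\Omega)$ continuously (the boundedness of $\Omega$ is crucial here), the closed graph theorem gives boundedness with some norm $c(t)>0$. I then invoke the standard kernel representation for bounded operators $L^p\to L^\infty$: there exists a measurable kernel $K^t\colon\Omega\times\Omega\to\mathbb R$ with $\|K^t(x,\cdot)\|_{L^{p'}(\Omega)}\leq c(t)$ for a.e.\ $x$ such that $(T(t)f)(x)=\int_\Omega K^t(x,y)f(y)\,dy$ for all $f\in L^p(\Omega)$.

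The core step is to show that $T(2t)=T(t)\circ T(t)$ is compact on $L^p(\Omega)$ for every $t>0$. Given a bounded sequence $(f_n)\subset L^p$, the sequence $g_n:=T(t)f_n$ is uniformly bounded in $L^\infty(\Omega)$. Since $L^1(\Omega)$ is separable and $L^\infty(\Omega)=(L^1(\Omega))^*$, the Banach--Alaoglu theorem furnishes a subsequence $g_{n_k}$ converging weak$^*$ to some $g\in L^\infty(\Omega)$. The boundedness of $\Omega$ enters again via $K^t(x,\cdot)\in L^{p'}(\Omega)\subseteq L^1(\Omega)$ for a.e.\ $x$, so pairing weak$^*$ convergence against this $L^1$ function yields pointwise a.e.\ convergence $(T(t)g_{n_k})(x)\to(T(t)g)(x)$. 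Combined with the uniform bound $|(T(t)g_{n_k})(x)|\leq c(t)^2|\Omega|^{1/p}\sup_n\|f_n\|_p$ and the finiteness of $|\Omega|$, dominated convergence upgrades this to $T(2t)f_{n_k}=T(t)g_{n_k}\to T(t)g$ in $L^p(\Omega)$, proving compactness of $T(2t)$, and hence of every $T(s)$ with $s>0$.

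To pass from compactness of the semigroup to compactness of the resolvent, for $\lambda$ beyond the growth bound of $(T(t))_{t\geq 0}$ I use the splitting
\[
R(\lambda,-A)=\int_0^\varepsilon e^{-\lambda s}T(s)\,ds+e^{-\lambda\varepsilon}T(\varepsilon)R(\lambda,-A),
\]
derived from the Laplace representation. The first summand has operator norm $O(\varepsilon)$ as $\varepsilon\to 0^+$, while the second is compact since $T(\varepsilon)$ is compact and $R(\lambda,-A)$ is bounded. Thus $R(\lambda,-A)$ is an operator-norm limit of compact operators, and is itself compact. The step I expect to demand most care is the kernel representation in the first paragraph: genuine joint measurability of $K^t$ on $\Omega\times\Omega$ together with the uniform $L^{p'}$ bound must be extracted cleanly; once this classical representation is in hand, the weak$^*$ compactness and dominated convergence argument in the middle paragraph runs smoothly, with the boundedness of $\Omega$ being the essential ingredient converting weak$^*$ convergence in $L^\infty$ into $L^p$ convergence.
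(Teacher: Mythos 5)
Your proof is correct, but it takes a genuinely different route from the paper. The paper disposes of the compactness of $T(t)$ in two lines by abstract lattice theory: since $\Omega$ has finite measure, $L^\infty(\Omega)$ is exactly the principal ideal of $E=L^p(\Omega)$ generated by the constant function with value one, so ultracontractivity says $T(t/2)E$ lies in that ideal, and Theorem~2.2 of \cite{daners:17:rds} then yields compactness of $T(t)=T(t/2)T(t/2)$ directly; the passage to compactness of the resolvent is left as standard. You instead reprove this smoothing-implies-compactness principle concretely in $L^p$: closed graph theorem to get $T(t)\in\mathcal L\bigl(L^p(\Omega),L^\infty(\Omega)\bigr)$, the classical Dunford--Pettis/Kantorovich--Vulikh representation of such operators by kernels with $K^t(x,\cdot)\in L^{p'}(\Omega)\subseteq L^1(\Omega)$, weak$^*$ sequential compactness of bounded sets in $L^\infty(\Omega)$ (Banach--Alaoglu plus separability of $L^1(\Omega)$), and dominated convergence on the finite measure space. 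All of these steps are sound; the kernel representation you flag is indeed the one nontrivial imported ingredient and should be quoted with a precise reference, but it is classical and its hypotheses ($1\leq p<\infty$, $\sigma$-finite measure) are satisfied here, and without it (or some substitute like the lattice result the paper cites) weak$^*$ convergence alone would not give the a.e.\ convergence you need. Your final step, the identity $R(\lambda,-A)=\int_0^\varepsilon e^{-\lambda s}T(s)\,ds+e^{-\lambda\varepsilon}T(\varepsilon)R(\lambda,-A)$, is a nice touch: it exhibits the resolvent as a norm limit of compact operators without appealing to norm continuity of compact semigroups, and is more explicit than the paper's ``in particular''. What the paper's route buys is brevity and abstraction (the argument is insensitive to the concrete $L^p$ structure, only the principal-ideal description of $L^\infty$ matters); what yours buys is a self-contained, elementary proof whose only black box is the kernel theorem, and which makes transparent exactly where the boundedness of $\Omega$ enters, namely through $L^\infty(\Omega)\hookrightarrow L^p(\Omega)$ and $L^{p'}(\Omega)\subseteq L^1(\Omega)$.
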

\begin{proof}
  Set $E:=L^p(\Omega)$ and let $u:=\boldsymbol 1$ be the constant function with value one. As $\Omega$ has finite measure the principal ideal $E_u$ in $E$ generated by $u$ is given by $E_u=L^\infty(\Omega)$. By assumption $T(t/2)E\subseteq E_u$ for all $t>0$. Hence \cite[Theorem~2.2]{daners:17:rds} implies that $T(t)=T(t/2)T(t/2)$ is compact for all $t>0$. In particular $A$ has compact resolvent.
\end{proof}

Suppose that $(T(t))_{t\geq 0}$ is a compact irreducible $C_0$-semigroup. Let $u$ be the principal eigenvector of $A$ corresponding to the principal eigenvalue $\lambda_1(A)$. Then
\begin{equation}
  \label{eq:eigenvector-semigroup}
  T(t)u=e^{-\lambda_1(A)t}u\quad\text{for all $t>0$.}
\end{equation}
If the semigroup is locally smoothing, then $u\in C(\Omega)$. Moreover, if $T(t)$ is irreducible on $E=L^p(\Omega)$ it follows that $u$ is a quasi-interior point of $L^p(\Omega)$, which means that $u(x)>0$ for almost every $x\in\Omega$. We have in fact a stronger statement.

\begin{theorem}[positivity improving semigroups]
  \label{thm:positivity-improving}
  Suppose that $-A$ is the generator of positive irreducible holomorphic $C_0$-semigroup $(T(t))_{t\geq 0}$ on $E=L^p(\Omega)$, where $1\leq p<\infty$. Assume that the semigroup is ultra-contractive and locally smoothing. Then $T(t)$ is compact for all $t>0$ and the following assertions are true.
  \begin{enumerate}[\upshape (a)]
  \item If $u_0\in E$ with $u_0> 0$, then $u(t)=T(t)u_0\in BC(\Omega)$ and $u(t)(x)>0$ for all $t>0$ and all $x\in\Omega$.
  \item If $u$ is the principal eigenvector of $A$, then $u\in BC(\Omega)$ and $u(x)>0$ for all $x\in\Omega$.
  \end{enumerate}
\end{theorem}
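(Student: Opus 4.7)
The plan is to treat the three claims---compactness, orbit continuity, and pointwise positivity---in turn, with the last being the heart of the matter. Compactness of $T(t)$ for $t>0$ is immediate from ultra-contractivity via Proposition~\ref{prop:bounded-compact} (in the bounded-domain setting relevant to the applications of the paper). For the first assertion of (a), if $u_0>0$ in $E$ and $t>0$, then $u(t):=T(t)u_0$ lies in $L^\infty(\Omega)$ by ultra-contractivity and in $C(\Omega)$ by the locally smoothing property, hence in $BC(\Omega)$. Part~(b) will drop out once (a) is established: the identity $u=e^{\lambda_1(A)t}T(t)u$ for the principal eigenvector, combined with (a), gives both $u\in BC(\Omega)$ and $u(x)>0$ for all $x\in\Omega$.

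For the pointwise positivity in (a), fix $t_0>0$ and $x_0\in\Omega$. A preliminary observation is that $T(r)\colon E\to BC(\Omega)$ is bounded for every $r>0$, by the closed graph theorem: if $u_n\to u$ in $E$ and $T(r)u_n\to f$ uniformly on $\Omega$ (a bounded domain), then both $T(r)u_n\to T(r)u$ in $E$ and $T(r)u_n\to f$ in $L^p$, so $f=T(r)u$ a.e.\ and, by continuity of both, everywhere. It follows that for each $r>0$ and $x\in\Omega$, $\phi_{r,x}(w):=[T(r)w](x)$ defines a positive functional $\phi_{r,x}\in E'$. An auxiliary lemma is that $\phi_{r,x}\neq 0$ for every such $r,x$: if $\phi_{r,x}=0$ for some $r>0$, then by the semigroup identity $\phi_{s,x}=T(s-r)'\phi_{r,x}=0$ for all $s\geq r$, and by analyticity of the $E'$-valued map $s\mapsto\phi_{s,x}$ on $(0,\infty)$ (inherited from holomorphy of~$T$ via the identities $\phi_{s,x}=T(s-r_0)'\phi_{r_0,x}$ for arbitrary $r_0\in(0,s)$) we would get $\phi_{s,x}=0$ for every $s>0$, contradicting the locally smoothing hypothesis.

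The decisive step relies on the fact that for a positive, irreducible, holomorphic $C_0$-semigroup on $L^p(\Omega)$ with $1\leq p<\infty$, each $T(s)u_0$ with $s>0$ and $u_0>0$ is a quasi-interior point of $E_+$, i.e.\ $T(s)u_0>0$ almost everywhere in $\Omega$; this is where the criteria from~\cite{arendt:20:spp} do their real work and I would invoke it rather than re-prove it here. Set $v:=T(t_0/2)u_0$, so that $v>0$ a.e.\ in $\Omega$. Since $\phi_{t_0/2,x_0}\neq 0$, pick $w\in E_+$ with $[T(t_0/2)w](x_0)>0$. Quasi-interiority of $v$ gives $w\wedge nv\to w$ in $E$ as $n\to\infty$, and boundedness of $T(t_0/2)\colon E\to BC(\Omega)$ yields $[T(t_0/2)(w\wedge nv)](x_0)\to[T(t_0/2)w](x_0)>0$, so the left-hand side is strictly positive for large~$n$. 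On the other hand, $w\wedge nv\leq nv$, together with positivity of $T(t_0/2)$ and continuity of both orbits, gives the pointwise inequality $[T(t_0/2)(w\wedge nv)](x_0)\leq n[T(t_0/2)v](x_0)$. Combining these two facts forces $[T(t_0)u_0](x_0)=[T(t_0/2)v](x_0)>0$, completing (a). The main obstacle is precisely the quasi-interior assertion at every positive time: securing it cleanly in the present abstract setting is exactly what the tools of~\cite{arendt:20:spp} are designed for.
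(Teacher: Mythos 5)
Your proposal is correct, and it differs from the paper in an instructive way: the paper disposes of part (a) by simply checking that the locally smoothing conditions \eqref{eq:smoothing-semigroup} and \eqref{eq:zero-regular} are hypotheses (I) and (II) of \cite[Theorem~3.1]{arendt:20:spp} and quoting that theorem, whereas you essentially reconstruct a proof of that external result. Your ingredients -- boundedness of $T(r)\colon E\to BC(\Omega)$ by the closed graph theorem, the point-evaluation functionals $\phi_{r,x}=\delta_x\circ T(r)\in E'_+$, the analyticity argument showing $\phi_{s,x}\neq 0$ for all $s>0$ (propagating \eqref{eq:zero-regular} backwards in time), and the lattice approximation $w\wedge nv\to w$ exploiting quasi-interiority of $v=T(t_0/2)u_0$ -- are all sound, and the remaining external input, that an irreducible positive holomorphic semigroup maps $u_0>0$ to a quasi-interior point at every positive time, is a true and citable fact; note, however, that this is classical (see \cite[Theorem~C-III.3.2]{nagel:86:osp}) rather than the novel content of \cite{arendt:20:spp}, whose Theorem~3.1 is precisely the upgrade from almost-everywhere to everywhere positivity that you carry out by hand, so your attribution is slightly off even though nothing is circular. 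Your treatment of compactness (via Proposition~\ref{prop:bounded-compact}), of $u(t)\in BC(\Omega)$, and of part (b) via \eqref{eq:eigenvector-semigroup} coincides with the paper's; you also correctly flag that boundedness of $\Omega$ is used both for the compactness step and for your closed-graph argument (uniform convergence implying $L^p$ convergence), an assumption the paper itself makes implicitly by invoking Proposition~\ref{prop:bounded-compact}. In short, the paper's proof buys brevity by outsourcing the key positivity statement, while yours buys self-containedness at the cost of quoting the classical quasi-interiority result for irreducible holomorphic semigroups.
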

\begin{proof}
  By Proposition~\ref{prop:bounded-compact} the semigroup $T(t)$ is compact and thus $A$ has compact resolvent. Since the semigroup is positive and irreducible there exists a principal eigenvalue and eigenvector.  We apply \cite[Theorem~3.1]{arendt:20:spp}, noting that conditions (I) and (II) are given by assumption. This proves (a) and thus (b) by \eqref{eq:eigenvector-semigroup}.
\end{proof}

\section{Positive semigroups and Kato's inequality}
\label{sec:kato-inequality}
One of our main tools to establish an eigenvector comparison for different potentials is an abstract version of Kato's inequality closely related to that from \cite{arendt:84:kic}. For completeness we prove the exact version we need in this paper.

\begin{theorem}[Kato inequality]
  \label{thm:kato-ineq}
  Let $\Omega\subseteq\mathbb R^N$ be a bounded domain and let $E$ be one of the Banach lattices $L^p(\Omega)$ with $1\leq p<\infty$, $C_0(\Omega)$ or $C(\bar\Omega)$. Assume that $-A$ is the generator of a positive $C_0$-semigroup $(T(t))_{t\geq 0}$ on $E$. Then
  \begin{equation}
    \label{eq:kato-ineq}
    \langle 1_{\{u>0\}}Au,\varphi\rangle
    \geq\langle u^+,A'\varphi\rangle
  \end{equation}
  for all $u\in D(A)$ and all $0\leq\varphi\in D(A')$.
\end{theorem}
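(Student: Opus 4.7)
The plan is to read both sides of \eqref{eq:kato-ineq} as one-sided derivatives of suitable semigroup orbits at $t=0^+$, and then reduce the inequality to a purely algebraic positivity statement involving $T(t)u^\pm$.

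First, since $u\in D(A)$, we have $\frac{1}{t}(u-T(t)u)\to Au$ in the norm of $E$ as $t\downarrow 0$. The indicator $1_{\{u>0\}}$ is a bounded Borel function (with $\{u>0\}$ measurable in the $L^p$-case and open in the two $C$-space cases, since $u$ is then continuous), so multiplication by $1_{\{u>0\}}$ preserves the relevant convergence when paired with $\varphi$, giving
\begin{equation*}
\langle 1_{\{u>0\}}Au,\varphi\rangle
=\lim_{t\downarrow 0}\frac{1}{t}\bigl\langle 1_{\{u>0\}}\bigl(u-T(t)u\bigr),\varphi\bigr\rangle.
\end{equation*}
Second, because $\varphi\in D(A')$, the quotient $\frac{1}{t}(\varphi-T(t)'\varphi)$ converges weak$^*$ to $A'\varphi$ in $E'$; testing against $u^+\in E$ yields
\begin{equation*}
\langle u^+,A'\varphi\rangle
=\lim_{t\downarrow 0}\frac{1}{t}\bigl\langle u^+-T(t)u^+,\varphi\bigr\rangle.
\end{equation*}

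Next I would subtract these two formulas, using the pointwise identity $1_{\{u>0\}}u=u^+$ to cancel the $t=0$ terms. Since both limits exist separately, their difference is the limit of the difference:
\begin{equation*}
\langle 1_{\{u>0\}}Au,\varphi\rangle-\langle u^+,A'\varphi\rangle
=\lim_{t\downarrow 0}\frac{1}{t}\bigl\langle T(t)u^+-1_{\{u>0\}}T(t)u,\varphi\bigr\rangle.
\end{equation*}
The key algebraic move is then the decomposition
\begin{equation*}
T(t)u^+-1_{\{u>0\}}T(t)u
=1_{\{u\leq 0\}}T(t)u^++1_{\{u>0\}}T(t)u^-,
\end{equation*}
obtained from $T(t)u=T(t)u^+-T(t)u^-$ and $1=1_{\{u>0\}}+1_{\{u\leq 0\}}$. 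Positivity of the semigroup forces $T(t)u^+\geq 0$ and $T(t)u^-\geq 0$, so this expression is a non-negative bounded measurable function. Pairing with $\varphi\geq 0$ gives a non-negative number for every $t>0$, and passing to the limit yields \eqref{eq:kato-ineq}.

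The part I expect to require care is the uniform treatment of the pairing $\langle h,\varphi\rangle$ when $h=1_{\{u>0\}}f$ is only Borel and bounded rather than an element of $E$: for $E=L^p(\Omega)$ this is just the standard $L^p$--$L^{p'}$ duality, but for $E=C_0(\Omega)$ or $C(\bar\Omega)$ one has to interpret $\langle h,\varphi\rangle$ as $\int h\,d\varphi$ against the Radon measure $\varphi$ and verify that the norm convergence of $\frac{1}{t}(u-T(t)u)$ in $E$ (uniform convergence on $\bar\Omega$) transfers to convergence of the integrals. Once this bookkeeping is settled, the proof reduces to the single positivity identity above.
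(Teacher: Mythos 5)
Your argument is correct and follows essentially the same route as the paper: the positivity input $T(t)u^{+}-1_{\{u>0\}}T(t)u=1_{\{u\leq 0\}}T(t)u^{+}+1_{\{u>0\}}T(t)u^{-}\geq 0$ is just an algebraic rewriting of the paper's chain $1_{\{u>0\}}T(t)u\leq 1_{\{u>0\}}T(t)u^{+}\leq T(t)u^{+}$, and both proofs then combine norm convergence of $\frac{1}{t}(u-T(t)u)$ with the weak$^{*}$-generator property of $A'$ (plus the Riesz/Radon-measure interpretation of $\varphi$ in the $C_0(\Omega)$ and $C(\bar\Omega)$ cases). The only cosmetic difference is that you compute the two limits separately and subtract, whereas the paper passes the pointwise inequality between the difference quotients to the limit directly.
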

\begin{proof}
  Let $u\in D(A)$. By the positivity of the semigroup generated by $-A$ and since $u\leq u^+$ we have $1_{\{u>0\}}T(t)u\leq 1_{\{u>0\}}T(t)u^+\leq T(t)u^+$. We also have $u^+=1_{\{u>0\}}u$. Hence, as $u\in D(A)$
  \begin{equation*}
    1_{\{u>0\}}Au
    =1_{\{u>0\}}\lim_{t\downarrow 0}\frac{u-T(t)u}{t}
    \geq\lim_{t\downarrow 0}\frac{u^+-T(t)u^+}{t}.
  \end{equation*}
  If $E=L^p(\Omega)$ with $1\leq p<\infty$ then $1_{\{u>0\}}T(t)u^+\in E$. Hence if $0\leq\varphi\in D(A')$, then
  \begin{equation*}
    \bigl\langle 1_{\{u>0\}}Au,\varphi\bigr\rangle
    \geq\lim_{t\downarrow 0}
    \Bigl\langle\frac{u^+-T(t)u^+}{t},\varphi\Bigr\rangle
    =\lim_{t\downarrow 0}
    \Bigl\langle u^+, \frac{\varphi-T(t)'\varphi}{t}\Bigr\rangle
    =\langle u^+,A'\varphi\rangle
  \end{equation*}
  since $A'$ is the the weak$^*$ generator of $T(t)'$; see for instance \cite[Section~I-A.3.4]{nagel:86:osp}.  If $E=C_0(\Omega)$ or $E=C(\bar\Omega)$, then for $u\in D(A)$ we have
  \begin{equation*}
    Au=\lim_{t\downarrow 0}\frac{u-T(t)u}{t}
  \end{equation*}
  uniformly. By the Riesz representation theorem as found for instance in \cite[Theorem~6.19]{rudin:74:rca} every bounded linear functional on $E$ is represented by a regular Borel measure $\omega$ on $\Omega$ or $\bar\Omega$, respectively. Hence, assuming that $\omega\in D(A')$,
  \begin{multline*}
    \bigl\langle 1_{\{u>0\}}Au,\omega\bigr\rangle
    =\int_{\{u>0\}}\lim_{t\downarrow 0}\frac{u-T(t)u}{t}\,d\omega
    =\lim_{t\downarrow 0}\int_{\{u>0\}}\frac{u-T(t)u}{t}\,d\omega\\
    \geq\lim_{t\downarrow 0}
    \int_{\{u>0\}}\frac{u^+-T(t)u^+}{t}\,d\omega
    =\lim_{t\downarrow 0}
    \Bigl\langle\frac{\omega-T(t)'\omega}{t},u^+\Bigr\rangle
    =\langle u^+,A'\omega\rangle,
  \end{multline*}
  where the interchange of limit and integration is justified by the uniform convergence of the integrand. The last equality is again because $A'$ is the weak$^*$ generator of $T(t)'$
\end{proof}

\section{Comparison of eigenvectors}
\label{sec:eigenvector-comparison}
In this section we establish a comparison principle for eigenvectors that is the key for our application to the logistic equation with minimal regularity. In particular it is designed for situations where boundary regularity is absent and where the Hopf boundary maximum principle is not applicable.  Througout this section, let $\Omega\subseteq\mathbb R^N$ be open and assume that $E=L^p(\Omega)$, $1\leq p<\infty$.

\begin{theorem}[Eigenvector comparison]
  \label{thm:ev-comparison}
  Suppose that $0<m\in L^\infty(\Omega)$ has compact support in $\Omega$. Furthermore assume that $-A$ generates a positive, irreducible and locally smoothing semigroup $(T(t))_{t\geq 0}$ on $E$ and has compact resolvent. Then also $-(A+m)$ generates a positive, irreducible and locally smoothing semigroup and has compact resolvent.

  Moreover, the principal eigenvectors $u_0$ and $u_m$ of $A$ and $A+m$ are in $E$ and there exists a constant $c>0$ such that
  \begin{equation}
    \label{eq:ev-comparison}
    u_0(x)\leq cu_m(x)
  \end{equation}
  for all $x\in\Omega$.
\end{theorem}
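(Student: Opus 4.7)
The plan is to reduce the comparison to Kato's inequality (Theorem~\ref{thm:kato-ineq}) applied to $v:=u_0-cu_m$ for a carefully chosen $c>0$. Writing $\lambda_0:=\lambda_1(A)$, $\lambda_m:=\lambda_1(A+m)$ and $\mu:=\lambda_m-\lambda_0$, Proposition~\ref{prop:ev-monotone}(ii) supplies the inheritance of positivity, irreducibility and compact resolvent by $-(A+m)$ together with $\mu>0$ (strict since $m>0$), and Theorem~\ref{thm:m-locally-smoothing}(ii) gives the inheritance of the locally smoothing property. The identity $u=e^{\lambda t}T(t)u$ combined with $T(t)E\subseteq C(\Omega)$ then places $u_0$ and $u_m$ in $C(\Omega)$, and both are quasi-interior, hence positive almost everywhere.

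The hard part is to upgrade the a.e.\ positivity of $u_m$ to pointwise positivity on $\Omega$, since the hypotheses here are weaker than those of Theorem~\ref{thm:positivity-improving}. Fix $x\in\Omega$ and equip $C(\Omega)$ with the Fr\'echet topology of uniform convergence on compacta. Then $T_m(s)\colon E\to C(\Omega)$ has closed graph (a limit in $C(\Omega)$ agrees in $L^p_{\loc}$ with a limit in $E$), so it is continuous by the closed graph theorem. Composing with the continuous point-evaluation $\delta_x$ yields a positive bounded functional $\psi_s\in E'_+$ with $\psi_s(f)=[T_m(s)f](x)$. By the definition of locally smoothing there exist $w>0$ in $E$ and $s_0>0$ with $\psi_{s_0}(w)>0$, so $0<\psi_{s_0}$ in $E'$. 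Irreducibility of $(T_m(t))_{t\geq 0}$ then produces $t>0$ with $\langle T_m(t)u_m,\psi_{s_0}\rangle>0$, which reads $[T_m(s_0+t)u_m](x)=e^{-\lambda_m(s_0+t)}u_m(x)>0$, forcing $u_m(x)>0$.

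With $u_m$ continuous and pointwise positive on $\Omega$, the compact set $K:=\supp(m)$ satisfies $\min_K u_m>0$ and $\max_K u_0<\infty$, so $c:=\max_K u_0/\min_K u_m$ yields $v\leq 0$ on $K$. Since $v\in D(A)$ and a short computation gives $Av=\lambda_0 u_0-c(\lambda_m-m)u_m=\lambda_0 v+c(m-\mu)u_m$, applying Kato's inequality to $v$ with the strictly positive principal eigenvector $\varphi_0\in D(A')$ of $A'$ (whose strict positivity is established as in the proof of Theorem~\ref{thm:spr-monotone}) and cancelling the common term $\lambda_0\langle v^+,\varphi_0\rangle$ yields
\begin{equation*}
\int_\Omega 1_{\{v>0\}}(m-\mu)u_m\varphi_0\,dx\geq 0.
\end{equation*}
Since $v\leq 0$ on $K$ gives $\{v>0\}\cap K=\emptyset$ and $m\equiv 0$ off $K$, the integrand reduces to $-\mu 1_{\{v>0\}}u_m\varphi_0\leq 0$, and the integral therefore vanishes. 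Strict positivity of $\varphi_0$ and a.e.\ positivity of $u_m$ then force $\{v>0\}$ to have measure zero, and continuity of $v$ upgrades this to $v\leq 0$ everywhere on $\Omega$, which is the claim.
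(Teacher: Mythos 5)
Your proposal is correct, and its endgame is genuinely different from the paper's. Both proofs share the same skeleton: inherit positivity, irreducibility, compact resolvent and the locally smoothing property via Proposition~\ref{prop:ev-monotone} and Theorem~\ref{thm:m-locally-smoothing}, use $u=e^{\lambda t}T(t)u$ to see that $u_0,u_m\in C(\Omega)$, choose $c$ so that $u_0\leq cu_m$ on the compact set $\supp(m)$, and then feed $v=u_0-cu_m$ into Kato's inequality (Theorem~\ref{thm:kato-ineq}). The difference is in how the conclusion $v^+=0$ is extracted. The paper normalises so that $A^{-1}\geq 0$ exists, derives $0\leq v_c^+\leq\lambda_1(0)A^{-1}v_c^+$ for \emph{all} large $c$, and argues by contradiction: letting $c_n\to\infty$, the normalised functions $w_n$ tend to $0$ pointwise, and compactness of $A^{-1}$ (with a weak-convergence or dominated-convergence argument depending on $p$) forces $\|w_n\|\to 0$, contradicting $\|w_n\|=1$. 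You instead test against the strictly positive adjoint principal eigenvector $\varphi_0$ with $A'\varphi_0=\lambda_1(0)\varphi_0$ (constructed exactly as in Theorem~\ref{thm:spr-monotone} and Proposition~\ref{prop:A-pev}) and exploit the \emph{strict} inequality $\mu=\lambda_1(m)-\lambda_1(0)>0$ from Proposition~\ref{prop:ev-monotone}(ii): for the single chosen $c$, the Kato estimate collapses to $\int 1_{\{v>0\}}(m-\mu)u_m\varphi_0\,dx\geq 0$ with a nonpositive integrand, so $\{v>0\}$ is null and, by continuity, empty. This avoids the limiting argument and the compactness of $A^{-1}$ in the final step altogether (compactness still enters through the existence of $\varphi_0$ and of the eigenvalues), at the price of using the strict monotonicity of the principal eigenvalue, which the paper's route does not need at this point. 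A further bonus of your write-up is the explicit closed-graph/irreducibility argument showing $u_m(x)>0$ for every $x\in\Omega$ under the present (weaker than Theorem~\ref{thm:positivity-improving}) hypotheses; the paper's proof uses this pointwise positivity both to get $u_m>\delta$ on $\supp(m)$ and to get $w_n\to 0$ pointwise, but leaves its justification implicit. Only cosmetic remarks: your $c$ should be taken as, say, $\max\{1,\max_K u_0/\min_K u_m\}$ to guarantee $c>0$ without comment (though $\max_K u_0>0$ anyway since $K$ has positive measure and $u_0>0$ a.e.), and the cancellation step should note that $\langle v_c^+,\varphi_0\rangle$ is finite — both trivial.
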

\begin{proof}
  The fact that $-(A+m)$ generates a positive irreducible locally smoothing $C_0$-semigroup follows from Theorem~\ref{thm:m-locally-smoothing}.  It has compact resolvent by Proposition~\ref{prop:ev-monotone}. By replacing $A$ by $\omega I+A$ for $\omega$ large enough we can assume without loss of generality that $A^{-1}$ exists and is positive. In particular the principal eigenvalues $\lambda_1(0)$ and $\lambda_1(m)$ of $A$ and $A+m$ are both positive. Denote the corresponding principal eigenvectors by $u_0$ and $u_m$, respectively. Subtracting $c>0$ times the equation $(A+m)u_m=\lambda_1(m)u_m$ from the equation $Au_0=\lambda_1(0)u_0$ we deduce that
  \begin{equation*}
    A(u_0-cu_m)-mcu_m=\lambda_1(0)u_0-\lambda_1(m)cu_m.
  \end{equation*}
  By Proposition~\ref{prop:ev-monotone}, we see that $\lambda_1(0)\leq \lambda_1(m)$ and so
  \begin{equation*}
    A(u_0-cu_m)\leq mcu_m+\lambda_1(0)(u_0-cu_m)
  \end{equation*}
  for all $c\geq 0$.  By assumption $\supp(m)\subseteq\Omega$ is compact. As $u_m\in C(\Omega)$ there exists $\delta>0$ such that $u_m(x)>\delta$ for all $x\in\supp(m)$.  Hence there exists $c_0>0$ such that $m(u_0-cu_m)\leq 0$ for all $c\geq c_0$.  Setting
  \begin{equation*}
    v_c:=u_0-cu_m.
  \end{equation*}
  we have $m1_{\{v_c>0\}}=0$. Thus we are left with
  \begin{equation}
    \label{eq:vc-w-ineq}
    1_{\{v_c>0\}}Av_c\leq \lambda_1(0)v_c^+.
  \end{equation}
  for all $c\geq c_0$.  Now by Kato's inequality from Theorem~\ref{thm:kato-ineq}
  \begin{equation*}
    \langle 1_{\{v_c>0\}}A v_c,\psi\rangle
    \geq\langle v_c^+,A'\psi\rangle
  \end{equation*}
  for all $0\leq\psi\in D(A')$. Combining this with \eqref{eq:vc-w-ineq} we see that
  \begin{equation*}
    \langle v_c^+,A'\psi\rangle
    \leq\langle 1_{\{v_c>0\}}A v_c,\psi\rangle
    \leq\lambda_1(0)\langle v_c^+,\psi\rangle
  \end{equation*}
  for all $0\leq\psi\in D(A')$ and all $c\geq c_0$. Choosing $0\leq\varphi\in E_+'$ and setting $\psi:=(A^{-1})'\varphi$ we see that
  \begin{equation*}
    \langle v_c^+,\varphi\rangle
    \leq\lambda_1(0)\langle A^{-1}v_c^+,\varphi\rangle
  \end{equation*}
  for all $c\geq c_0$. As $0\leq\varphi\in E'$ is arbitrary we deduce that
  \begin{equation}
    \label{eq:vc-ineq}
    0\leq v_c^+\leq\lambda_1(0)A^{-1}v_c^+.
  \end{equation}
  for all $c\geq c_0$. If we can show that there exists $c\geq c_0$ such that $v_c^+=0$, then $u_0\leq c u_m$ for that $c$ and we are done. Assume now that $v_c^+\neq 0$ for all $c\geq c_0$. Let $(c_n)$ be an increasing sequence in $[c_0,\infty)$ with $c_n\to\infty$ as $n\to\infty$. Then it makes sense to define
  \begin{equation*}
    w_n:=\frac{v_{c_n}^+}{\|v_{c_n}^+\|_E}
  \end{equation*}
  for all $n\in\mathbb N$. Since $u_m(x)>0$ for all $x\in\Omega$ it follows that $w_n(x)\to 0$ for all $n\in\mathbb N$, that is, $w_n\to 0$ pointwise. For the remaining part we have to provide slightly different arguments depending on the space.

  (i) In case $E=L_p(\Omega)$, $p\in(1,\infty)$, this implies that $w_n\rightharpoonup 0$ weakly as $n\to \infty$. Indeed, for every $\varphi\in C_c(\Omega)$ we have
  \begin{equation*}
    \lim_{n\to\infty}\int_\Omega w_n \varphi\,dx = 0
  \end{equation*}
  since $w_n$ becomes zero eventually on every given compact subset of $\Omega$.  As $C_c(\Omega)$ is dense in $L^{p'}(\Omega)$ and $(w_n)$ is bounded it follows that $w_n\rightharpoonup 0$. By the compactness of $A^{-1}$ we deduce that $A^{-1}w_n\to 0$ strongly in $E$. It follows from \eqref{eq:vc-ineq} that $w_n\to 0$ strongly in $E$. This is a contradiction since $\|w_n\|_E=1$ for all $n\in\mathbb N$.

  (ii) If $E=L^1(\Omega)$, then by the compactness of $A^{-1}$ there exists a subsequence $(w_{n_k})$ such that $A^{-1}w_{n_k}$ converges in $L^1(\Omega)$. As $w_{n_k}\to 0$ pointwise, \eqref{eq:vc-ineq} and a version of the Lebesgue dominated convergence theorem (see e.g. \cite[Theorem~3.25]{alt:16:lfa}) imply that $w_{n_k}\to 0$ in $L^1(\Omega)$ as $k\to \infty$. Again this is a contradiction as in~(i).
\end{proof}

\section{The semi-linear logistic equation with degeneracy}
\label{sec:logistic}
The purpose of this section is to demonstrate how the functional analytic tools we developed apply to the abstract logistic equation. Throughout we assume that $\Omega\subseteq\mathbb R^N$ is a bounded domain and that $-A$ is the generator of a positive, irreducible, holomorphic $C_0$-semigroup $(T(t))_{t\geq 0}$ on $E=L^p(\Omega)$ with $1\leq p<\infty$. We also assume that $(T(t))_{t\geq 0}$ is locally smoothing and ultra-contractive in the sense of Definitions~\ref{def:locally-smoothing} and~\ref{def:ultra-contractive}. Thus $T(t)E\subseteq BC(\Omega)$ for all $t>0$ and it follows from Proposition~\ref{prop:bounded-compact} that $T(t)$ is compact for all $t>0$ and hence that $A$ has compact resolvent. Examples of such semigroups are given in Sections~\ref{sec:examples-laplacian} for the Laplace operator with diverse boundary conditions. Additional comments on the semilinear problem for that case are found in Subsection~\ref{sec:semilinear-laplacian}. More general elliptic operators are discussed in Section~\ref{sec:elliptic-divergence-form}.

Consider the logistic equation
\begin{equation}
  \label{eq:logeq}
  Au=\lambda u-m(x)g(x,u)u
\end{equation}
on $E$, where we assume that

\begin{enumerate}[(N1)]
\item $\lambda\in\mathbb R$;
\item $0<m\in L^\infty(\Omega)$;
\item $g\in C^{0,1}\bigl(\bar\Omega\times[0,\infty)\bigr)$ with $g(x,0)=0$ for
  all $x\in\bar\Omega$;\label{g:zero}
\item $\dfrac{\partial g}{\partial \xi}\in C\bigl(\bar\Omega\times [0,\infty)\bigr)$
  and $\dfrac{\partial g}{\partial \xi}(x,\xi)>0$ for all
  $(x,\xi)\in\Omega\times[0,\infty)$\label{g:increasing}
\item $\lim_{\xi\to\infty}g(x,\xi)=\infty$ uniformly with respect to $x$
  in compact subsets of $\Omega$.\label{g:unbounded}
\end{enumerate}

We can assume without loss of generality that $g$ is defined on $\bar\Omega\times\mathbb R$ by taking the odd extension $g(x,\xi):=-g(x,-\xi)$ for all $\xi<0$ and $x\in\bar\Omega$. Then $g\in C(\bar\Omega\times\mathbb R)$ and $\xi\mapsto g(x,\xi)$ is differentiable with respect to $\xi\in\mathbb R$ with a partial derivative that is continuous on $\bar\Omega\times\mathbb R$. This formulation of the logistic equation was considered in \cite{daners:18:gdg}, where very special semigroups on smooth domains were considered.

By a \emph{solution} of \eqref{eq:logeq} we understand a function $u\in D(A)$ such that
\begin{equation*}
  (Au)(x)=\lambda u(x)-m(x)g(x,u(x))u(x)
\end{equation*}
for almost all $x\in \Omega$. For some special situations there is also a notion of \emph{weak solution} to \eqref{eq:logeq}, see Subsection~\ref{sec:semilinear-laplacian}. We will show there that they are solutions in the above sense and hence the theory we develop applies to weak solutions as well.

We are interested in characterising the range of $\lambda\in\mathbb R$ for which \eqref{eq:logeq} has a \emph{non-trivial positive solution} $u$, that is, $0<u\in D(A)$ and $u$ satisfies \eqref{eq:logeq}.  Recall that for $u\in L^p(\Omega)$, $u>0$ means that $u(x)\geq 0$ almost everywhere, and that $u(x)$ does not vanish almost everywhere. Unlike most earlier work on the subject such as \cite{ouyang:92:pss,fraile:eep,du:99:bsc,delpino:94:pss} we do not make any assumptions on the regularity of the vanishing set
\begin{equation*}
  \Omega_0:=\{x\in\Omega\colon m(x)=0\},
\end{equation*}
nor on the self-adjointness of the elliptic operator. Such regularity
conditions were removed in \cite{daners:18:gdg}, but $\partial\Omega$
was still required to be of class $C^2$. We also remove this condition
and do not make any explicit assumptions on the regularity of
$\partial\Omega$.

As $m>0$ we know from Proposition~\ref{prop:ev-monotone} that $\lambda_1(\gamma m)$ is strictly increasing as a function of $\gamma\geq 0$, where as before $\lambda_1(m):=\lambda_1(A+m)$. Hence
\begin{equation}
  \label{eq:lstar}
  \lambda^*(m):=\lim_{\gamma\to\infty}\lambda_1(\gamma m)\in(-\infty,\infty]
\end{equation}
exists. Our main result is as follows.

\begin{theorem}
  \label{thm:main}
  Under the above assumptions, the logistic equation \eqref{eq:logeq} has a non-trivial positive solution if and only if $\lambda \in\bigl(\lambda_1(0),\lambda^*(m)\bigr)$. In that case the non-trivial positive solution $u_\lambda$ is unique and linearly stable. Moreover, $u_\lambda \in BC(\Omega)$, $u(x)>0$ for all $x\in\Omega$ and
  \begin{equation*}[\lambda\mapsto u_\lambda]
    \in C^1\bigl(\bigl(\lambda_1(0),\lambda^*(m)\bigr),D(A)\bigr)
  \end{equation*}
  is strictly increasing in the sense that $\mu<\lambda$ implies that $u_\mu(x)<u_\lambda(x)$ for all $x\in\Omega$. Finally, $u_\lambda\downarrow 0$ in $L^\infty(\Omega)\cap E$ as $\lambda\downarrow \lambda_1(0)$ and $\|u_\lambda\|_\infty\uparrow\infty$ as $\lambda\uparrow \lambda^*(m)$.
\end{theorem}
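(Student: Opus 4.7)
The plan is to rewrite the equation as $(A+V)u=\lambda u$ with $V:=mg(\cdot,u)$ and identify $\lambda$ as the principal eigenvalue of the perturbed operator. First I would upgrade the regularity of $u$: since $(\omega I+A)^{-1}=\int_0^\infty e^{-\omega t}T(t)\,dt$ maps $L^p(\Omega)$ into $BC(\Omega)$ by ultra-contractivity and local smoothing, a bootstrap gives $u\in BC(\Omega)$ so that $V\in L^\infty(\Omega)$. By Theorem~\ref{thm:m-locally-smoothing} the operator $-(A+V)$ generates a positive, irreducible, locally smoothing, ultra-contractive $C_0$-semigroup with compact resolvent; Theorem~\ref{thm:positivity-improving} then gives $u(x)>0$ on $\Omega$, and Proposition~\ref{prop:A-pev} identifies $\lambda=\lambda_1(A+V)$. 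Since $V>0$ in the lattice sense (from (N3)--(N4) together with $u>0$), strict monotonicity of $\lambda_1$ in Proposition~\ref{prop:ev-monotone} yields $\lambda>\lambda_1(0)$; and with $\gamma:=\|g(\cdot,u)\|_\infty$ one has $V\leq\gamma m$, whence $\lambda\leq\lambda_1(\gamma m)<\lambda^*(m)$.

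\textbf{Existence.} For $\lambda$ in that open interval I would construct an ordered sub-/super-solution pair and feed it into a monotone iteration. The sub-solution is $\underline u=\varepsilon\phi_0$ with $\phi_0$ the principal eigenvector of $A$: the sub-solution inequality reduces to $mg(\cdot,\varepsilon\phi_0)\leq\lambda-\lambda_1(0)$, automatic for small $\varepsilon$ by $g(\cdot,0)=0$ and $\phi_0\in BC(\Omega)$. The super-solution is $\bar u=K\phi_\gamma$, where $\gamma$ is chosen with $\lambda_1(\gamma m)>\lambda$ (possible since $\lambda<\lambda^*(m)$) and $\phi_\gamma$ is the principal eigenvector of $A+\gamma m$; the super-solution inequality becomes $m\,(g(\cdot,K\phi_\gamma)-\gamma)\geq\lambda-\lambda_1(\gamma m)$, which is secured for $K$ large by (N5) on compact subsets of $\Omega$ where $\phi_\gamma$ is bounded away from $0$. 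The delicate point $\underline u\leq\bar u$ pointwise --- in the presence of a potentially irregular degeneracy set $\{m=0\}$ and no boundary regularity --- is handled by replacing $m$ by a non-trivial compactly supported $\tilde m\leq m$ and applying the pointwise comparison $\phi_0\leq c\,\phi_{A+\tilde m}$ from Theorem~\ref{thm:ev-comparison}. A monotone iteration based on $u\mapsto(\omega I+A)^{-1}[(\omega+\lambda)u-mg(\cdot,u)u]$ in the order interval $[\underline u,\bar u]$, for $\omega$ large enough to make the right-hand side order-preserving, then produces a solution $u_\lambda$, and the argument of the first paragraph gives $u_\lambda\in BC(\Omega)$ with $u_\lambda(x)>0$ everywhere.

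\textbf{Uniqueness, linear stability, and $C^1$ monotone dependence.} If $u_1,u_2$ are two non-trivial positive solutions, both are principal eigenvectors of $A+mg(\cdot,u_i)$ with common eigenvalue $\lambda$; the sweeping argument $t^*:=\sup\{t>0\colon tu_1\leq u_2\}$ combined with the strict monotonicity of $\xi\mapsto g(x,\xi)$ (N4) and of $m\mapsto\lambda_1(m)$ (Proposition~\ref{prop:ev-monotone}) forces $u_1=u_2$. Linear stability follows because the linearisation of the nonlinearity at $u_\lambda$ has multiplier $m\,\bigl(g(\cdot,u_\lambda)+u_\lambda g_\xi(\cdot,u_\lambda)\bigr)$, which strictly dominates $mg(\cdot,u_\lambda)$ by (N4) and $u_\lambda>0$, so Proposition~\ref{prop:ev-monotone} places the principal eigenvalue of the linearisation strictly above $\lambda$. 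Invertibility of $D_uF(\lambda,u_\lambda)$ for $F(\lambda,u):=Au-\lambda u+mg(\cdot,u)u$ is then immediate, and the implicit function theorem yields the $C^1$-branch $\lambda\mapsto u_\lambda\in D(A)$. Strict monotonicity in $\lambda$: for $\mu<\lambda$, $u_\mu$ is a sub-solution for the $\lambda$-equation, comparison with the unique $u_\lambda$ gives $u_\mu\leq u_\lambda$, and applying the positivity-improving resolvent of $A+mg(\cdot,u_\lambda)$ to the non-trivial non-negative function $u_\lambda-u_\mu$ upgrades this to strict pointwise inequality.

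\textbf{Endpoint asymptotics and main obstacle.} As $\lambda\downarrow\lambda_1(0)$, if $u_\lambda$ did not tend to zero, a weak$^*$ cluster point of $mg(\cdot,u_\lambda)\in L^\infty$ combined with Theorem~\ref{thm:ev-continuous} applied to $\lambda=\lambda_1(A+mg(\cdot,u_\lambda))\to\lambda_1(0)$ would force the cluster point to be zero, yielding $u_\lambda\to 0$ in $E$ and then in $L^\infty$ by ultra-contractivity. As $\lambda\uparrow\lambda^*(m)$, an assumed bound $\|u_\lambda\|_\infty\leq M$ along a subsequence would give $\lambda\leq\lambda_1(\|g(\cdot,M)\|_\infty\, m)<\lambda^*(m)$, a contradiction. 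The main obstacle throughout is the construction of the super-solution and in particular the comparison $\underline u\leq\bar u$: without boundary regularity one cannot appeal to the Hopf lemma to compare $\phi_0$ with $\phi_\gamma$ near $\partial\Omega$, and the Kato-inequality-based eigenvector comparison of Theorem~\ref{thm:ev-comparison} is precisely the abstract substitute that makes the whole existence scheme function for arbitrary $\partial\Omega$ and arbitrary degeneracy set $\{m=0\}$.
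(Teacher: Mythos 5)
Your overall architecture is the paper's: identify $\lambda=\lambda_1\bigl(mg(\cdot\,,u)\bigr)$ for necessity, build an ordered sub-/supersolution pair ordered via the Kato-based Theorem~\ref{thm:ev-comparison} and run a monotone (Tarski) iteration for existence, and get stability from strict monotonicity of $\lambda_1$. However, the supersolution step as written has a genuine gap. With the full weight $m$, the inequality $A(K\phi_\gamma)\geq\lambda K\phi_\gamma-mg(\cdot\,,K\phi_\gamma)K\phi_\gamma$ must hold almost everywhere on \emph{all} of $\Omega$, and near $\partial\Omega$ (Dirichlet case, say, where $\phi_\gamma$ can be arbitrarily small while $m$ is bounded below) the term $m\bigl(g(\cdot\,,K\phi_\gamma)-\gamma\bigr)$ is essentially $-\gamma m$ and cannot be absorbed by $\bigl(\lambda_1(\gamma m)-\lambda\bigr)$ once $\gamma$ is large; securing the inequality ``on compact subsets'' is not enough. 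The paper's Proposition~\ref{prop:supersol} therefore builds the supersolution from the truncated weight $m_\delta=1_{\Omega_\delta}m$: off $\Omega_\delta$ the inequality is trivial, on the compact set $\bar\Omega_\delta$ the eigenfunction is bounded below so (N5) applies, and, crucially, Theorem~\ref{thm:ev-continuous} is invoked to ensure $\lambda<\lambda_1(\gamma m_\delta)$ for small $\delta$ --- a step missing from your sketch. Your compactly supported $\tilde m\leq m$ appears only to order $\underline u\leq\bar u$; it must also carry the supersolution itself and the eigenvalue condition (and it is exactly this compact support that makes Theorem~\ref{thm:ev-comparison} applicable). A second, smaller gap: the claim that $(\omega I+A)^{-1}$ maps $L^p(\Omega)$ into $BC(\Omega)$ does not follow from the hypotheses, since no rate of ultracontractivity is assumed and the Laplace-transform integral need not converge in $L^\infty$; the paper instead iterates $0<u\leq(\omega+\lambda)(\omega I+A)^{-1}u$ and passes to $0<u\leq e^{\lambda t}T(t)u$ via the Euler formula, from which $u\in BC(\Omega)$ follows.

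For uniqueness your sweeping argument $t^*:=\sup\{t>0\colon tu_1\leq u_2\}$ presupposes that this set is nonempty and that the sweep can be continued, which classically rests on the strong maximum principle together with Hopf's boundary lemma to compare the two solutions up to $\partial\Omega$ --- precisely the tool that is unavailable on rough domains and that the whole paper is built to avoid. The paper's Proposition~\ref{prop:uniqueness} is purely spectral: subtracting the equations gives $\bigl(A+mg(\cdot\,,u)+mV\bigr)w=\lambda w$ for $w=u-v$, and if $w\neq0$ then Proposition~\ref{prop:A-pev} forces $\lambda\geq\lambda_1\bigl(mg(\cdot\,,u)+mV\bigr)>\lambda_1\bigl(mg(\cdot\,,u)\bigr)=\lambda$, a contradiction; you should use this (or supply a substitute for Hopf). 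Likewise, the implicit function theorem for the $C^1$-branch needs a space on which $u\mapsto mg(\cdot\,,u)u$ is continuously differentiable; $D(A)$ with the graph norm need not embed into $L^\infty(\Omega)$ here, so ``immediate'' is optimistic --- the paper instead extracts the derivative from the explicit identity \eqref{eq:solutions-diff} combined with Theorem~\ref{thm:ev-continuous}, and obtains the weak inequality $u_\mu\leq u_\lambda$ directly from that identity rather than from a sub-/supersolution comparison. Your stability argument and endpoint asymptotics are in the paper's spirit (the paper handles the limits by monotone convergence in the fixed-point equation rather than weak$^*$ cluster points), and your identification of Theorem~\ref{thm:ev-comparison} as the decisive new ingredient is exactly right.
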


While there is no solution of \eqref{eq:logeq} for
$\lambda>\lambda^*(m)$ there are solutions on $\Omega\setminus\Omega_0$
blowing up on $\partial\Omega_0$, see for instance
\cite{cirstea:02:eub,du:06:ost,lopez:16:mpe}.

We will prove the above theorem in a sequence of results. Before we do so we first make some observations.

\begin{remark}
  \label{rem:fixed-point-equation}
  Recall from Section~\ref{sec:strong-monotonicity} that $\omega I + A$ is invertible as an operator on $L^p(\Omega)$ for each $\omega>-\lambda_1(0)$. In that case we also know that $(\omega I+A)^{-1}\geq 0$. For $u\in L^p(\Omega)$ the function $mg(\cdot\,,u)$ is measurable on $\Omega$. Hence, $u\in D(A)$ is a solution of \eqref{eq:logeq} if and only if $mg(\cdot\,,u)u\in L^p(\Omega)$ and
  \begin{equation}
    \label{eq:fixed-pt-eq}
    u=F(u):=(\omega I+A)^{-1}\bigl(\lambda u+\omega u-mg(\cdot\,,u)u\bigr)
  \end{equation}
  for some $\omega>-\lambda_1(0)$. This observation transforms the original problem \eqref{eq:logeq} into a fixed point equation.
\end{remark}

We first show some regularity of positive solutions of \eqref{eq:logeq}, keeping the assumption from the start of the section.

\begin{proposition}[regularity of solutions]
  \label{prop:solution-bounded}
  Let $\lambda\in\mathbb R$ and let $u>0$ be a solution of \eqref{eq:logeq}. Then, $u\in BC(\Omega)$ and $u(x)>0$ for all $x\in\Omega$. Moreover,
  \begin{equation}
    \label{eq:solution-bound}
    0<u\leq e^{\lambda t}T(t)u
  \end{equation}
  for all $t>0$.
\end{proposition}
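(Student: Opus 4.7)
The plan is to rewrite \eqref{eq:logeq} as a linear eigenvalue equation $(A+V)u = \lambda u$ with the multiplier $V := m\,g(\cdot\,,u)$, and then invoke our results on positivity-improving semigroups applied to $A+V$ rather than $A$. The only obstacle is that we cannot a priori treat $V$ as a bounded multiplier, since we do not yet know that $u \in L^\infty(\Omega)$; this must be extracted from the bound \eqref{eq:solution-bound} itself.

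First I will establish \eqref{eq:solution-bound}. Since $u \in D(A)$, the product $Vu = \lambda u - Au$ lies in $E$, and it is nonnegative because $m \geq 0$, $u \geq 0$, and $g(\cdot\,,\xi) \geq 0$ for $\xi \geq 0$ by (N\ref{g:zero}) and (N\ref{g:increasing}). The orbit $t \mapsto T(t)u$ therefore lies in $C^1([0,\infty),E)$ and satisfies
\begin{equation*}
  \tfrac{d}{dt}T(t)u = -A\,T(t)u = T(t)(Vu) - \lambda T(t)u.
\end{equation*}
Variation of constants (against the integrating factor $e^{\lambda t}$) gives
\begin{equation*}
  T(t)u = e^{-\lambda t}u + \int_0^t e^{-\lambda(t-s)}T(s)(Vu)\,ds,
\end{equation*}
and since $T(s)$ is positive and $Vu \geq 0$ the integral is nonnegative, proving $u \leq e^{\lambda t}T(t)u$ for every $t>0$.

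By the standing assumption $T(t)E \subseteq BC(\Omega)$, this inequality together with $u \geq 0$ forces $u \in L^\infty(\Omega)$. Continuity of $g$ on compact sets then yields $V = m\,g(\cdot\,,u) \in L^\infty(\Omega)$. Proposition~\ref{prop:ev-monotone} and Theorem~\ref{thm:m-locally-smoothing} show that $-(A+V)$ generates a positive, irreducible, ultra-contractive and locally smoothing $C_0$-semigroup $(T_V(t))_{t\geq 0}$ on $E$, and holomorphicity is preserved under the bounded perturbation by $V$. Rewriting \eqref{eq:logeq} as $(A+V)u = \lambda u$ yields $T_V(t)u = e^{-\lambda t}u$, so Theorem~\ref{thm:positivity-improving}(a) applied with $u_0 := u > 0$ gives $T_V(t)u \in BC(\Omega)$ and $(T_V(t)u)(x) > 0$ for every $x \in \Omega$ and every $t > 0$. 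Multiplying by $e^{\lambda t}$ yields $u \in BC(\Omega)$ and $u(x) > 0$ for all $x \in \Omega$, as required.

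The delicate step is the first one, where \eqref{eq:solution-bound} must be established before $u$ is known to be bounded. The observation that unblocks it is simply that $u \in D(A)$ automatically places $Vu = \lambda u - Au$ into $E$, so that $s \mapsto T(s)(Vu)$ is an honest continuous, nonnegative $E$-valued integrand and the positivity of $(T(t))_{t\geq 0}$ can be transferred through the Bochner integral.
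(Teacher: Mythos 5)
Your proof is correct. The second half (boundedness of $u$ from \eqref{eq:solution-bound} and the standing assumption $T(t)E\subseteq BC(\Omega)$, then rewriting \eqref{eq:logeq} as $(A+V)u=\lambda u$ with $V=mg(\cdot\,,u)\in L^\infty(\Omega)$ and invoking Theorem~\ref{thm:m-locally-smoothing} and Theorem~\ref{thm:positivity-improving}) is exactly the paper's argument. Where you differ is in how you obtain the key inequality \eqref{eq:solution-bound}: the paper starts from the fixed-point formulation \eqref{eq:fixed-pt-eq}, which gives $0<u\leq(\lambda+\omega)(\omega I+A)^{-1}u$, iterates this resolvent inequality $n$ times, chooses $\omega=n/t$, and passes to the limit via the Euler formula $T(t)u=\lim_{n\to\infty}\bigl[\tfrac{n}{t}\bigl(\tfrac{n}{t}I+A\bigr)^{-1}\bigr]^{n}u$; you instead note that $Vu=\lambda u-Au\in E_+$ because $u$ solves \eqref{eq:logeq} (here $g(x,\xi)\geq 0$ for $\xi\geq 0$ by (N3)--(N4)), differentiate the orbit $t\mapsto T(t)u$ (legitimate since $u\in D(A)$), and use Duhamel's formula together with positivity of $T(s)$ to conclude $e^{\lambda t}T(t)u-u=\int_0^t e^{\lambda s}T(s)(Vu)\,ds\geq 0$. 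Both arguments use only positivity of the semigroup resp.\ resolvent; yours is slightly more direct in that it avoids the Euler approximation and the $n$-dependent choice of $\omega$, while the paper's stays entirely at the resolvent level and never differentiates the orbit. Your closing remark correctly identifies the one point that needs care, namely that $Vu\in E$ holds automatically from the definition of a solution, so no a priori boundedness of $u$ is needed; this plays the same role as the paper's observation that \eqref{eq:fixed-pt-eq} yields \eqref{fixed-pt-ineq} before boundedness is known.
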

\begin{proof}
  Let $0<u$ be a solution of \eqref{eq:logeq} and let $\omega > -\lambda_1(0)$. Then by \eqref{eq:fixed-pt-eq} we have
  \begin{equation}
    \label{fixed-pt-ineq}
    0<u\leq(\lambda+\omega)(\omega I+A)^{-1}u.
  \end{equation}
  Iterating this inequality we obtain
  \begin{equation*}
    0<u\leq(\omega+\lambda)^n(\omega I+A)^{-n}u
  \end{equation*}
  for all $n\in\mathbb N$. Let $t>0$ and set $\omega:=n/t$. Then, for $n$ large enough, it follows that
  \begin{equation*}
    0<u\leq\Bigl(1+\frac{\lambda t}{n}\Bigr)^n
    \Bigl[\frac{n}{t}\Bigl(\frac{t}{n} I+A\Bigr)^{-1}\Bigr]^nu.
  \end{equation*}
  Letting $n\to\infty$ we find \eqref{eq:solution-bound} for all $t>0$; see \cite[Corollary~III.5.5]{engel:00:ops}. By assumption $T(t)$ maps to $BC(\Omega)$ and thus $u\in L^\infty(\Omega)$. Hence $\tilde m:=mg(\cdot\,,u)\in L^\infty(\Omega)$ and $Au+\tilde mu=\lambda u$.  Theorem~\ref{thm:m-locally-smoothing} and Theorem~\ref{thm:positivity-improving} imply that $u\in BC(\Omega)$ with $u(x)>0$ for all $x\in\Omega$.
\end{proof}

We next establish a necessary condition for the existence of a positive solution.

\begin{proposition}[Necessary conditions for existence]
  \label{prop:exist-1}
  Suppose that $0<u\in D(A)$ is a positive solution of \eqref{eq:logeq} for some $\lambda\in\mathbb R$. Then $\lambda\in\bigl(\lambda_1(0),\lambda^*(m)\bigr)$ and
  \begin{equation}
    \label{eq:pev-solution}
    \lambda=\lambda_1\bigl(mg(\cdot\,,u)\bigr).
  \end{equation}
\end{proposition}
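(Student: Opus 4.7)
The strategy is to reinterpret \eqref{eq:logeq} as a linear eigenvalue problem for a perturbed operator. Setting $\tilde m := m g(\cdot\,,u)$, the equation rewrites as $(A + \tilde m)u = \lambda u$, so $u$ is a positive eigenvector of $A + \tilde m$ at eigenvalue $\lambda$. Once Proposition~\ref{prop:A-pev} can be applied to $A + \tilde m$, it delivers \eqref{eq:pev-solution} directly; locating $\lambda$ in the interval $\bigl(\lambda_1(0),\lambda^*(m)\bigr)$ is then a matter of comparing $\tilde m$ to $0$ and to a multiple of $m$ and appealing to the strict monotonicity of $\lambda_1$ from Proposition~\ref{prop:ev-monotone}.

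To justify the use of Proposition~\ref{prop:A-pev}, I first invoke Proposition~\ref{prop:solution-bounded} to obtain $u \in BC(\Omega)$. Since $g$ is continuous on the compact set $\bar\Omega \times [0,\|u\|_\infty]$, it is bounded there, so $g(\cdot\,,u) \in L^\infty(\Omega)$ and therefore $\tilde m \in L^\infty(\Omega)$. By Proposition~\ref{prop:ev-monotone}(ii), $-(A + \tilde m)$ then generates a positive, irreducible $C_0$-semigroup with compact resolvent, and Proposition~\ref{prop:A-pev} applies to give $\lambda = \lambda_1(A + \tilde m) = \lambda_1(\tilde m)$, which is \eqref{eq:pev-solution}.

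For the lower bound $\lambda > \lambda_1(0)$ I argue that $\tilde m \geq 0$ with $\tilde m$ not almost everywhere zero: $m$ is positive on a set of positive measure by (N2), and $g(x,u(x)) > 0$ for every $x \in \Omega$ follows from $g(x,0)=0$, the strict positivity of $\partial g/\partial\xi$ on $\Omega \times [0,\infty)$, and the fact that $u(x) > 0$ on $\Omega$ by Proposition~\ref{prop:solution-bounded}. The strict monotonicity in \eqref{eq:eigenvalue-comparison} then yields $\lambda = \lambda_1(\tilde m) > \lambda_1(0)$.

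The only non-routine step is the strict upper bound $\lambda < \lambda^*(m)$. I set $\gamma := \sup\{g(x,\xi) : (x,\xi) \in \bar\Omega \times [0,\|u\|_\infty]\}$, which is finite by continuity of $g$. Then $\tilde m \leq \gamma m$ pointwise, so monotonicity gives $\lambda = \lambda_1(\tilde m) \leq \lambda_1(\gamma m)$. To upgrade to strict inequality, I note that $\gamma m \leq (\gamma+1)m$ with strict inequality on the positive-measure set $\{m>0\}$, whence Proposition~\ref{prop:ev-monotone}(ii) gives $\lambda_1(\gamma m) < \lambda_1((\gamma+1)m) \leq \lambda^*(m)$ by the supremum defining \eqref{eq:lstar}. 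Chaining the inequalities yields $\lambda < \lambda^*(m)$ and completes the proof.
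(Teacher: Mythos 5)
Your proposal is correct and follows essentially the same route as the paper: boundedness and interior positivity of $u$ from Proposition~\ref{prop:solution-bounded}, recasting \eqref{eq:logeq} as the linear eigenvalue problem $(A+\tilde m)u=\lambda u$ with $\tilde m=mg(\cdot\,,u)\in L^\infty(\Omega)$, uniqueness of the principal eigenvalue via Proposition~\ref{prop:A-pev}, and then the monotonicity of $\lambda_1$ from Proposition~\ref{prop:ev-monotone} together with \eqref{eq:lstar} to locate $\lambda$ in $\bigl(\lambda_1(0),\lambda^*(m)\bigr)$. Your comparison of $\gamma m$ with $(\gamma+1)m$ is just a slightly more explicit way of exploiting the strict monotonicity of $\gamma\mapsto\lambda_1(\gamma m)$ that the paper uses implicitly.
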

\begin{proof}
  If $0<u\in D(A)$ is a solution of \eqref{eq:logeq}, then $u\in BC(\Omega)$ by Proposition~\ref{prop:solution-bounded}. Hence, looking at \eqref{eq:logeq} we see that $u>0$ is an eigenvector for the eigenvalue problem $Au+\tilde mu=\lambda u$ with $\tilde m:=mg(\cdot\,,u)\in L^\infty(\Omega)$. Hence $\lambda=\lambda_1(\tilde m)$ by the uniqueness of the principal eigenvalue from Proposition~\ref{prop:A-pev}.  This proves \eqref{eq:pev-solution}. As $u(x)>0$ for all $x\in\Omega$ and $m>0$, assumption (N\ref{g:increasing}) on $g$ implies that
  \begin{equation*}
    0<\tilde m=mg(\cdot\,,u)\leq \gamma m<\infty,
  \end{equation*}
  where $\gamma:=\|g(\cdot\,,u)\|_\infty$. Now $\lambda_1(0)<\lambda<\lambda^*(m)$ by Proposition~\ref{prop:ev-monotone} and \eqref{eq:lstar}.
\end{proof}

We next prove the uniqueness of the non-trivial solution. The argument is different from the commonly used one such as that in \cite[Theorem~5.1]{du:06:ost} or \cite[Theorem~1.7]{lopez:16:mpe}. We completely avoid the use of sub- and super-solutions and the Hopf boundary maximum principle and only work with spectral properties.

\begin{proposition}[Uniqueness of positive solutions]
  \label{prop:uniqueness}
  For every $\lambda\in\mathbb R$ the problem \eqref{eq:logeq} has at most one positive solution $0<u\in D(A)$.
\end{proposition}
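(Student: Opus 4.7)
The plan is to use Kato's inequality (Theorem \ref{thm:kato-ineq}) together with the strict positivity of the dual principal eigenfunction to show that two positive solutions must agree almost everywhere on $\{m>0\}$, and then invoke simplicity of the principal eigenvector of a single linear operator to upgrade this to equality on all of $\Omega$. The argument is purely spectral, in line with the comment preceding the statement that sub/super-solutions and the Hopf principle are to be avoided.

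Concretely, suppose $u_1,u_2>0$ both solve \eqref{eq:logeq} for a given $\lambda$. By Proposition \ref{prop:exist-1}, each $u_i$ is a positive eigenvector of $A+mg(\cdot\,,u_i)$ with eigenvalue $\lambda=\lambda_1(mg(\cdot\,,u_i))$, and both satisfy $u_i(x)>0$ for all $x\in\Omega$ by Proposition \ref{prop:solution-bounded}. Writing $v:=u_1-u_2\in D(A)$, a short manipulation gives
\[(A+mg(\cdot\,,u_1))v=\lambda v-mu_2\bigl(g(\cdot\,,u_1)-g(\cdot\,,u_2)\bigr).\]
By Proposition \ref{prop:ev-monotone}, $-(A+mg(\cdot\,,u_1))$ generates a positive, irreducible $C_0$-semigroup with compact resolvent, so by the strict-positivity argument in the proof of Theorem \ref{thm:spr-monotone} there exists a strictly positive dual principal eigenfunction $\varphi_1$ satisfying $(A+mg(\cdot\,,u_1))'\varphi_1=\lambda\varphi_1$. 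Applying Theorem \ref{thm:kato-ineq} to $A+mg(\cdot\,,u_1)$ with $u=v$ and $\varphi=\varphi_1$, the $\lambda\langle v^+,\varphi_1\rangle$ contributions cancel and we are left with
\[\bigl\langle 1_{\{v>0\}}mu_2(g(\cdot\,,u_1)-g(\cdot\,,u_2)),\varphi_1\bigr\rangle\leq 0.\]

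The integrand is non-negative by assumption (N\ref{g:increasing}), and is strictly positive on $\{v>0\}\cap\{m>0\}$ because $u_2(x)>0$ there and $\varphi_1$ is strictly positive. Consequently this set has measure zero, so $u_1\leq u_2$ almost everywhere on $\{m>0\}$; exchanging $u_1$ and $u_2$ gives $u_1=u_2$ almost everywhere on $\{m>0\}$. Setting $\tilde m:=mg(\cdot\,,u_1)=mg(\cdot\,,u_2)\in L^\infty(\Omega)$, both $u_1$ and $u_2$ become positive eigenvectors of the single linear operator $A+\tilde m$ for the eigenvalue $\lambda_1(\tilde m)=\lambda$. By simplicity of the principal eigenvector (Proposition \ref{prop:A-pev}) there is $\kappa>0$ with $u_1=\kappa u_2$, and inserting this back into \eqref{eq:logeq} yields $mu_2\bigl(g(\cdot\,,\kappa u_2)-g(\cdot\,,u_2)\bigr)=0$ almost everywhere; the strict monotonicity of $g(x,\cdot)$ at points where $m(x)>0$ and $u_2(x)>0$ forces $\kappa=1$, hence $u_1=u_2$.

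The main obstacle I foresee is that the Kato-based computation only sees the set $\{m>0\}$ and is insensitive to the possibly large degeneracy $\{m=0\}$. The remedy is the two-step structure above: equality on $\{m>0\}$ is enough to identify the two multiplication potentials $mg(\cdot\,,u_i)$ as a single function $\tilde m$ globally, which reduces the problem to uniqueness of the principal eigenvector of the linear operator $A+\tilde m$, and the remaining scalar freedom is eliminated by re-substituting into the original nonlinear equation. A secondary point is to verify $\varphi_1\in D((A+mg(\cdot\,,u_1))')$, which follows from the Krein-Rutman theorem applied to the compact resolvent of $A+mg(\cdot\,,u_1)$.
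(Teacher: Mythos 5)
Your argument is correct, but it follows a genuinely different route from the paper. The paper's proof is shorter and purely spectral: writing $w:=u-v$ and using the integral form of the mean value theorem it produces a potential $V:=\int_0^1\frac{\partial g}{\partial\xi}(\cdot\,,v+sw)\,ds\,v$ with $mV>0$, so that $w$, if nonzero, would make $\lambda$ an eigenvalue of $A+mg(\cdot\,,u)+mV$; combining $\lambda\geq\lambda_1\bigl(mg(\cdot\,,u)+mV\bigr)$ with the \emph{strict} monotonicity of the principal eigenvalue from Proposition~\ref{prop:ev-monotone} (whose strictness rests on Theorem~\ref{thm:spr-monotone}) and with \eqref{eq:pev-solution} gives $\lambda>\lambda$, a contradiction. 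You instead test the difference equation against the strictly positive dual principal eigenfunction via Kato's inequality (Theorem~\ref{thm:kato-ineq}), conclude $u_1=u_2$ a.e.\ on $\{m>0\}$, and then use simplicity of the principal eigenvector (Proposition~\ref{prop:A-pev}) of the single operator $A+\tilde m$ to dispose of the degeneracy set $\{m=0\}$. What each buys: the paper's version needs the equality case of Theorem~\ref{thm:spr-monotone} but no Kato inequality and no dual eigenfunction, and it is a one-step contradiction; yours avoids the strict eigenvalue monotonicity altogether (only simplicity, irreducibility, compactness and the Krein--Rutman dual eigenvector enter) at the price of repurposing the Kato machinery that the paper reserves for the eigenvector comparison in Section~\ref{sec:eigenvector-comparison}, plus the two-step structure to cross the degeneracy. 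Two small remarks: your final re-substitution to get $\kappa=1$ is correct but superfluous, since $u_1=\kappa u_2$ together with $u_1=u_2$ a.e.\ on the positive-measure set $\{m>0\}$ and $u_2>0$ there already forces $\kappa=1$; and your appeal to the strict positivity of $\varphi_1$ and to $\varphi_1\in D\bigl((A+mg(\cdot\,,u_1))'\bigr)$ is legitimate, being exactly the construction used in the proofs of Theorem~\ref{thm:spr-monotone} and Proposition~\ref{prop:A-pev} via the adjoint of the compact resolvent.
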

\begin{proof}
  Fix $\lambda\in\mathbb R$ and let $0<u,v\in D(A)$ be solutions of \eqref{eq:logeq}. Set $w:=u-v$. Then
  \begin{equation}
    \label{eq:logeq-unique}
    Aw=\lambda w-m\bigl(g(\cdot\,,u)u-g(\cdot\,,v)v\bigr)
    =\lambda w-mg(\cdot\,,u)w - mVw,
  \end{equation}
  where we have rewritten the nonlinear terms in the form
  \begin{equation*}
    g(\cdot\,,u)u-g(\cdot\,,v)v
    =g(\cdot\,,u)w+\bigl(g(\cdot\,,u)-g(\cdot\,,v)\bigr)v
    =g(\cdot\,,u)w+Vw
  \end{equation*}
  with
  \begin{equation*}
    V:=\int_0^1\frac{\partial g}{\partial\xi}(\cdot\,,v+sw)\,ds\,v.
  \end{equation*}
  By assumption (N\ref{g:increasing}) on $g$ and since $v(x)>0$ for all $x\in\Omega$ it follows from Proposition~\ref{prop:solution-bounded} that $V(x)>0$ for all $x\in\Omega)$. As $m>0$ we therefore have that $mV>0$ and thus by Proposition~\ref{prop:ev-monotone} and \eqref{eq:pev-solution}
  \begin{equation}
    \label{eq:ev-mV}
    \lambda_1\bigl(m(g(\cdot\,,u)+mV)\bigr)
    >\lambda_1\bigl(mg(\cdot\,,u)\bigr)
    =\lambda.
  \end{equation}
  If $w\neq 0$, then \eqref{eq:logeq-unique} implies that $\lambda\in\sigma\bigl(A+mg(\cdot\,,u)+mV\bigr)$. By Proposition~\ref{prop:A-pev} and \eqref{eq:ev-mV}
  \begin{equation*}
    \lambda
    \geq\lambda_1\bigl(mg(\cdot\,,u)+mV\bigr)
    >\lambda,
  \end{equation*}
  which is clearly not possible. Hence we must have $w=0$, that is, $u=v$.
\end{proof}

Our next aim is to prove the existence of a non-trivial positive solution of \eqref{eq:logeq}. We know from Proposition~\ref{prop:solution-bounded} that any such solution is bounded. Hence we will seek such a solution in a bounded subset of positive functions $B\subseteq L^\infty(\Omega)\subseteq L^p(\Omega)$. Given such a bounded set, by choosing $\omega>-\lambda_1(0)$ large enough, we can guarantee that the mapping $F\colon B\to L^p(\Omega)$ given by \eqref{eq:fixed-pt-eq} is increasing. Indeed, if $k:=\sup_{u\in B}\|u\|_\infty$, then we choose $\omega>-\lambda_1(0)$ such that such that
\begin{equation}
  \label{eq:restriction-positive}
  \lambda+\omega-m(x)g(x,\xi)-m(x)\frac{\partial g}{\partial\xi}(x,\xi)\xi\geq 0
\end{equation}
for all $x\in\bar\Omega$ and all $\xi\in[0,k]$.

In order to prove the existence of a solution we use the Monotone Fixed Point Theorem due to Tarski \cite[Theorem~1]{tarski:55:ltf}. For completeness we include a proof. We point out that this fixed point theorem does not rely on the continuity of the map.

\begin{theorem}[Tarski]
  \label{thm:tarski}
  Let $S$ be a partially ordered set and let $a,b\in S$ with $a\leq b$. Let $[a,b]:=\{u\in S\colon a\leq u\leq b\}$. Assume that every non-empty subset of $[a,b]$ has a supremum in $[a,b]$. Let $G\colon[a,b]\to S$ be increasing, that is, if $u\leq v$, then $G(u)\leq G(v)$. Assume that $a\leq G(a)$ and that $b\geq G(b)$. Then $G$ has a fixed point in $[a,b]$.
\end{theorem}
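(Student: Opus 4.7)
The plan is to follow the classical Knaster--Tarski argument, which constructs a fixed point as the supremum of all ``sub-fixed'' points. Concretely, I would consider
\begin{equation*}
  M:=\{u\in[a,b]\colon u\leq G(u)\}.
\end{equation*}
This set is non-empty because $a\in M$ by hypothesis, and it is contained in $[a,b]$, so by the assumption on suprema there exists $w:=\sup M\in[a,b]$.

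The next step is to show that $w$ is itself an element of $M$, i.e.\ that $w\leq G(w)$. For every $u\in M$ we have $u\leq w$, so by monotonicity of $G$, $u\leq G(u)\leq G(w)$. Thus $G(w)$ is an upper bound of $M$, and by the definition of supremum $w\leq G(w)$. In particular $w\in M$.

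It then remains to prove the reverse inequality $G(w)\leq w$. Here I apply $G$ once more: from $w\leq G(w)$ and monotonicity, $G(w)\leq G(G(w))$. To conclude that this means $G(w)\in M$, I have to check that $G(w)\in[a,b]$; since $a\leq w\leq b$ and $a\leq G(a)$, $G(b)\leq b$, monotonicity gives $a\leq G(a)\leq G(w)\leq G(b)\leq b$. Hence $G(w)\in M$, so $G(w)\leq\sup M=w$. Combined with the previous step, $G(w)=w$.

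The only place that requires genuine care is the verification that the supremum exists \emph{within} $[a,b]$ and that all intermediate elements stay in $[a,b]$, since $G$ is only defined there; the transfinite-induction flavour of ordinary fixed-point iteration is replaced, cleanly, by the order-completeness hypothesis, so no continuity or topological argument is needed. Everything else is a direct application of the monotonicity of $G$ and the defining property of a supremum.
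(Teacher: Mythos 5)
Your proposal is correct and follows essentially the same Knaster--Tarski argument as the paper: both take $M=\{u\in[a,b]\colon u\leq G(u)\}$, form $w=\sup M\in[a,b]$, and show $w\leq G(w)$ and then $G(w)\in M$ to conclude $G(w)=w$. Your explicit check that $G(w)\in[a,b]$ via $a\leq G(a)\leq G(w)\leq G(b)\leq b$ is a small extra care point, but the route is the same.
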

\begin{proof}
  Let $M:=\{u\in [a,b]\colon u\leq G(u)\}$. Then $a\in M$, so $M$ is non-empty. Hence $u^*:=\sup(M)\in[a,b]$ exists. Since $G$ is increasing on $[a,b]$ we have
  \begin{equation*}
    u\leq G(u)\leq G(u^*)\leq G(b)\leq b
  \end{equation*}
  for all $u\in M$. By taking a supremum on the left hand side $u^*\leq G(u^*)=:v$. As $G$ is increasing we therefore have $v=G(u^*)\leq G(v)$. Thus $v\in M$ and since $u^*=\sup(M)$ we have $v\leq u^*$. Hence $v=u^*$ and consequently $G(u^*)=u^*$.
\end{proof}

In the above theorem $a$ is called a \emph{sub-solution} and $b$ is called a \emph{super-solution} of $u=G(u)$.

\begin{remark}
  From the proof of the above theorem, the fixed point $u^*$ by construction is the largest possible fixed point of $G$ in $[a,b]$. If we assume that every subset of $[a,b]$ has an infimum in $[a,b]$, then by similar arguments $u_*:=\inf\{u\in[a,b]\colon u\geq G(u)\}$ is the smallest possible fixed point of $G$ in $[a,b]$. Moreover, $u_*\leq u^*$.
\end{remark}

Next we consider the mapping $F\colon L^p(\Omega)\to L^p(\Omega)$ given by \eqref{eq:fixed-pt-eq}. Recall from Remark~\ref{rem:fixed-point-equation} that $u$ is a solution of \eqref{eq:logeq} if and only if $u$ is a fixed point of $F$. In order to apply Tarski's Theorem to get the existence of such a fixed point we need to find a subsolution and a supersolution of $F$. Note that $0<u\in D(A)$ is a supersolution for $F$ if $Au\geq \lambda u-mg(\cdot\,,u)u$ and a subsolution if $Au\leq \lambda u-mg(\cdot\,,u)u$. The idea for the construction of a supersolution is similar to that in \cite[Proposition~3.2]{daners:18:gdg}, which in turn generalised constructions given in \cite{du:03:bbs,du:06:dlm}. Our approach works in an abstract context and does not require much regularity.

\begin{proposition}[Existence of supersolution]
  \label{prop:supersol}
  Let $\lambda<\lambda^*(m)$. For $\delta>0$ define
  $\Omega_\delta :=\{x\in\Omega\colon\dist(x,\partial\Omega)>\delta\}$
  and let $m_\delta:=1_{\Omega_\delta }m$. Then the following assertions are true.
  \begin{enumerate}[\upshape (i)]
  \item There exist $\delta>0$ and $\gamma>0$ such that $\lambda<\lambda_1(\gamma m_\delta)<\lambda^*(m)$.
  \item Let $\varphi$ be the principal eigenvector of $A+\gamma m1_{\Omega_\delta}$ corresponding to the principal eigenvalue $\lambda_1(\gamma m_\delta)$ with $\delta,\gamma>0$ as in   {\upshape (i)}. Then there exists $\kappa_0>0$ such that $\kappa\varphi$ is a positive supersolution of \eqref{eq:logeq} whenever $\kappa\geq\kappa_0$.
  \end{enumerate}
\end{proposition}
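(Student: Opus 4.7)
For part~(i), the plan is a two-step construction. Since $\lambda^*(m) = \sup_{\gamma>0} \lambda_1(\gamma m)$ and $\gamma \mapsto \lambda_1(\gamma m)$ is strictly increasing by Proposition~\ref{prop:ev-monotone}, I first pick $\gamma > 0$ with $\lambda_1(\gamma m) > \lambda$; the same strict monotonicity forces $\lambda_1(\gamma m) < \lambda_1(2\gamma m) \le \lambda^*(m)$, so the right-hand inequality will hold automatically. Second, as $\delta \downarrow 0$ the functions $m_\delta = 1_{\Omega_\delta} m$ converge pointwise on $\Omega$ to $m$ and are uniformly bounded by $\|m\|_\infty$, so $m_\delta \to m$ weak$^*$ in $L^\infty(\Omega)$. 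Theorem~\ref{thm:ev-continuous}(ii) then yields $\lambda_1(\gamma m_\delta) \to \lambda_1(\gamma m)$, and the inequality $\lambda < \lambda_1(\gamma m_\delta)$ will hold for all sufficiently small $\delta$, while $\lambda_1(\gamma m_\delta) \le \lambda_1(\gamma m) < \lambda^*(m)$ again follows from monotonicity.

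For part~(ii), the strategy is to verify the supersolution inequality pointwise. Starting from the eigenvalue equation $A\varphi = \lambda_1(\gamma m_\delta)\varphi - \gamma m_\delta \varphi$, applying $A$ to $\kappa\varphi$, and dividing by the positive function $\kappa\varphi$, the requirement $A(\kappa\varphi) \ge \lambda(\kappa\varphi) - m\, g(\cdot\,,\kappa\varphi)(\kappa\varphi)$ reduces to
\begin{equation*}
  \lambda_1(\gamma m_\delta) - \lambda \;\ge\; \gamma m_\delta(x) - m(x)\, g(x,\kappa\varphi(x))
\end{equation*}
almost everywhere on $\Omega$. On $\Omega \setminus \Omega_\delta$ we have $m_\delta = 0$, the right-hand side is nonpositive and the left-hand side is strictly positive by (i), so nothing needs to be checked. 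On $\Omega_\delta$, where $m_\delta = m$, the inequality is implied by $g(x,\kappa\varphi(x)) \ge \gamma$, and the remaining task is to choose $\kappa_0$ so that this bound holds uniformly on $\Omega_\delta$ for every $\kappa \ge \kappa_0$.

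This is where Theorems~\ref{thm:m-locally-smoothing} and~\ref{thm:positivity-improving} enter: the perturbed semigroup generated by $-(A + \gamma m_\delta)$ inherits local smoothing and ultra-contractivity, hence is positivity improving, so the principal eigenvector $\varphi$ lies in $BC(\Omega)$ and satisfies $\varphi(x) > 0$ for every $x \in \Omega$. Since $\Omega$ is bounded, $\overline{\Omega_\delta}$ is a compact subset of $\Omega$, and the continuous function $\varphi$ attains a positive minimum $c_\delta > 0$ there. Hypothesis (N\ref{g:unbounded}) then supplies $\xi_0 > 0$ with $g(x,\xi) \ge \gamma$ for all $x \in \overline{\Omega_\delta}$ and $\xi \ge \xi_0$; setting $\kappa_0 := \xi_0/c_\delta$ and invoking monotonicity (N\ref{g:increasing}) of $g$ in its second argument, $\kappa \ge \kappa_0$ gives $g(x,\kappa\varphi(x)) \ge g(x,\xi_0) \ge \gamma$ on $\Omega_\delta$, as required.

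The main obstacle I anticipate is exactly the passage from quasi-interior positivity of $\varphi$ in $L^p(\Omega)$ to a genuine pointwise lower bound $c_\delta > 0$ on the compact set $\overline{\Omega_\delta}$. In the classical smooth-boundary setting this step would be supplied by Hopf's boundary lemma or a Harnack inequality; here it is instead delivered by the locally smoothing, ultra-contractive, positivity-improving machinery, whose preservation under the bounded perturbation $\gamma m_\delta$ via Theorem~\ref{thm:m-locally-smoothing} is what makes the argument go through in the rough-domain context considered in the paper.
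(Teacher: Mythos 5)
Your proposal is correct and follows essentially the same route as the paper: part (i) combines the definition of $\lambda^*(m)$, the monotonicity from Proposition~\ref{prop:ev-monotone} and the weak$^*$ continuity of $\lambda_1$ from Theorem~\ref{thm:ev-continuous} applied to $m_\delta\to m$, and part (ii) reduces the supersolution inequality to $m\,g(\cdot\,,\kappa\varphi)\geq\gamma m_\delta$, which is trivial off $\Omega_\delta$ and on $\Omega_\delta$ follows from the uniform lower bound $\varphi\geq c>0$ on the compact set $\bar\Omega_\delta$ supplied by Theorems~\ref{thm:m-locally-smoothing} and~\ref{thm:positivity-improving} together with (N\ref{g:unbounded}). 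Your ``divide by $\kappa\varphi$'' reformulation is just a cosmetic variant of the paper's add-and-subtract computation, so nothing essential differs.
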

In the statement of the theorem $1_{\Omega_\delta}$ is the indicator function of $\Omega_\delta$ which takes the value one on $\Omega_\delta$ and zero otherwise.
\begin{proof}
  (i) Let $\lambda<\lambda^*(m)$. By definition of $\lambda^*(m)$ there exists $\gamma>0$ such that $\lambda<\lambda_1(\gamma m)<\lambda^*(m)$. Clearly $m_\delta\uparrow m$ pointwise on $\Omega$ and hence by continuity of the principal eigenvalue with respect to the weight (see Theorem~\ref{thm:ev-continuous}) there exists $\delta>0$ such that $\lambda<\lambda_1(\gamma m_\delta)\leq\lambda_1(\gamma   m)<\lambda^*(m)$.

  (ii) Let $0<\varphi$ be the principal eigenvector corresponding to $\lambda_1(\gamma m_\delta)$. By (i)
  \begin{equation*}
    \begin{split}
      A(\kappa\varphi) &=\lambda_1(\gamma m_\delta)\kappa\varphi-\gamma m_\delta\kappa\varphi\\ &=\lambda\kappa\varphi-mg(\cdot\,,\kappa\varphi)\kappa\varphi +\bigl(\lambda_1(\gamma m_\delta)-\lambda\bigr)\kappa\varphi +\bigl(mg(\cdot\,,\kappa\varphi)-\gamma m_\delta\bigr) \kappa\varphi\\ &>\lambda\kappa\varphi-mg(\cdot\,,\kappa\varphi)\kappa\varphi +\bigl(mg(\cdot\,,\kappa\varphi)-\gamma m_\delta\bigr) \kappa\varphi\\
    \end{split}
  \end{equation*}
  for all $\kappa>0$. In order for $\kappa\varphi$ to be a supersolution it remains to show that
  \begin{equation}
    \label{eq:kappa-phi}
    mg(\cdot\,,\kappa\varphi)-\gamma m_\delta\geq 0
  \end{equation}
  for all $\kappa$ sufficiently large. By construction, $m_\delta=0$ on $\Omega\setminus\Omega_\delta$ and hence
  \begin{equation*}
    mg(\kappa\varphi)-\gamma m_\delta
    =mg(\kappa\varphi)\geq 0
    \qquad\text{on $\Omega\setminus\Omega_\delta$.}
  \end{equation*}
  As $\bar\Omega_\delta\subseteq\Omega$ is compact and $\varphi\in BC(\Omega)$ it follows from Theorem~\ref{thm:positivity-improving} that there exists $c>0$ such that $\varphi(x)\geq c$ for all $x\in\bar\Omega_\delta$. Hence, by assumption (N\ref{g:unbounded}) on $g$
  \begin{equation*}
    \lim_{\kappa\to\infty}\bigl(\inf_{x\in\Omega_\delta}g(x,\kappa\varphi)\bigr)
    \geq\lim_{\kappa\to\infty}\bigl(\inf_{x\in\Omega_\delta}g(x,\kappa c)\bigr)
    =\infty.
  \end{equation*}
  We can therefore choose $\kappa_0>0$ such that \eqref{eq:kappa-phi} is valid on $\Omega_\delta$ for all $\kappa\geq\kappa_0$. Hence \eqref{eq:kappa-phi} is valid on $\Omega$ and thus $\kappa\varphi$ is a supersolution of \eqref{eq:logeq} for all $\kappa\geq\kappa_0$.
\end{proof}

Given $\lambda>\lambda_1(0)$ we find a subsolution of \eqref{eq:logeq} in the usual way. A subsolution is a function $u\in D(A)$ with $Au\leq \lambda u+mg(\cdot,u)u$. We include the short proof.

\begin{lemma}[existence of subsolution]
  \label{lem:subsolution}
  Let $\lambda>\lambda_1(0)$ and let $0<\psi$ be the principal eigenvector of $A$ corresponding to $\lambda_1(0)$. Then there exists $\varepsilon_0>0$ such that $\varepsilon\psi$ is a subsolution of \eqref{eq:logeq} for all $\varepsilon\in(0,\varepsilon_0]$.
\end{lemma}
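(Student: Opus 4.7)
The plan is to take advantage of the fact that for a linear eigenfunction, $A\psi = \lambda_1(0)\psi$, so plugging $\varepsilon\psi$ into the subsolution inequality
\begin{equation*}
  A(\varepsilon\psi) \leq \lambda\varepsilon\psi - m\,g(\cdot,\varepsilon\psi)\,\varepsilon\psi
\end{equation*}
reduces, after dividing by $\varepsilon\psi$ (which is positive), to the pointwise condition
\begin{equation*}
  m(x)\,g(x,\varepsilon\psi(x)) \leq \lambda - \lambda_1(0)
  \qquad\text{for a.e.\ $x\in\Omega$.}
\end{equation*}
Since $\lambda - \lambda_1(0) > 0$ by hypothesis, all that remains is to make the left hand side uniformly small by choosing $\varepsilon$ small.

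First I would record that, by Theorem~\ref{thm:positivity-improving} applied to $A$ itself, $\psi \in BC(\Omega)$ and $\psi(x) > 0$ for every $x\in\Omega$; in particular $k := \|\psi\|_\infty < \infty$. Then for any $\varepsilon \in (0,1]$ the range of $\varepsilon\psi$ is contained in the compact interval $[0,k]$, so by the continuity of $g$ on $\bar\Omega\times[0,k]$ together with the assumption $g(x,0)=0$ from (N\ref{g:zero}), we obtain
\begin{equation*}
  M(\varepsilon) := \sup_{x\in\bar\Omega}\,g\bigl(x,\varepsilon\psi(x)\bigr)
  \leq \sup_{x\in\bar\Omega,\ \xi\in[0,\varepsilon k]} g(x,\xi)
  \longrightarrow 0
  \quad\text{as $\varepsilon\downarrow 0$.}
\end{equation*}

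Therefore I pick $\varepsilon_0 \in (0,1]$ small enough that $\|m\|_\infty M(\varepsilon_0) \leq \lambda - \lambda_1(0)$. For $\varepsilon \in (0,\varepsilon_0]$ the monotonicity of $\xi\mapsto g(x,\xi)$ on $[0,\infty)$ from (N\ref{g:increasing}) gives $M(\varepsilon) \leq M(\varepsilon_0)$, so
\begin{equation*}
  m(x)\,g(x,\varepsilon\psi(x))
  \leq \|m\|_\infty M(\varepsilon_0) \leq \lambda - \lambda_1(0)
\end{equation*}
almost everywhere. Multiplying through by the nonnegative quantity $\varepsilon\psi$ and using $A\psi = \lambda_1(0)\psi$ yields $A(\varepsilon\psi) \leq \lambda\,\varepsilon\psi - m\,g(\cdot,\varepsilon\psi)\,\varepsilon\psi$, which is the definition of a subsolution.

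There is no genuine obstacle here; the only minor point is that the inequality needs to hold almost everywhere in $\Omega$, which is automatic from the uniform bound on $g(x,\varepsilon\psi(x))$ over $\bar\Omega$. Strict monotonicity of $g$ in $\xi$ was used to convert "sufficiently small $\varepsilon$" into a whole interval $(0,\varepsilon_0]$; alternatively one could simply bound $g(x,\varepsilon\psi(x))$ by $g(x,\varepsilon_0\psi(x))$ directly by the same monotonicity.
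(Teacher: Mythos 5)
Your proof is correct and follows essentially the same route as the paper: use $A\psi=\lambda_1(0)\psi$ to reduce the subsolution inequality to the pointwise condition $m\,g(\cdot,\varepsilon\psi)\leq\lambda-\lambda_1(0)$, then use $\psi\in BC(\Omega)$, the continuity of $g$ with $g(\cdot,0)=0$, and the monotonicity of $g$ in $\xi$ to make this hold for all $\varepsilon\in(0,\varepsilon_0]$. The only cosmetic difference is that you bound via $\|m\|_\infty\sup_{\bar\Omega\times[0,\varepsilon k]}g$ while the paper bounds via $m\,g(x,\varepsilon\|\psi\|_\infty)$, which is the same uniform-smallness argument.
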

\begin{proof}
  Given $\varepsilon>0$ we have
  \begin{equation*}
    A(\varepsilon\psi)
    =\lambda_1(0)\varepsilon\psi
    =\lambda\varepsilon\psi-mg(\varepsilon\psi)\varepsilon\psi
    -\bigl(\lambda-\lambda_1(0)-mg(\varepsilon\psi)\bigr)\varepsilon\psi.
  \end{equation*}
  For $\varepsilon\psi$ to be a subsolution we need that $\lambda-\lambda_1(0)-mg(\varepsilon\psi)\geq 0$. By assumption $g(x,\cdot)$ is strictly increasing, $g(\cdot,0)=0$ and $\lambda-\lambda_1(0)>0$. As $\psi\in BC(\Omega)$ and $g\in C(\bar\Omega\times[0,\infty))$ there exits $\varepsilon_0>0$ such that
  \begin{equation*}
    0\leq mg\bigl(x,\varepsilon\psi(x)\bigr)
    \leq mg\bigl(x,\varepsilon\|\psi\|_\infty\bigr)
    <\lambda-\lambda_1(0)
  \end{equation*}
  for all $\varepsilon\in (0,\varepsilon_0]$ and $x\in\Omega$ as required.
\end{proof}

The following existence theorem makes critical use one of the key new features from this paper, namely the eigenvector comparison Theorem~\ref{thm:ev-comparison}. In the absence of boundary regularity it guarantees that the sub- and supersolutions we constructed can be ordered.

\begin{proposition}[Existence of a positive solution]
  \label{prop:exist-2}
  For every $\lambda\in\bigl(\lambda_1(0),\lambda^*(m)\bigr)$ the problem \eqref{eq:logeq} has a positive solution.
\end{proposition}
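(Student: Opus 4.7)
The plan is to apply Tarski's fixed point theorem (Theorem~\ref{thm:tarski}) to the map $F$ defined in \eqref{eq:fixed-pt-eq} on a suitable order interval in $L^p(\Omega)$. Fix $\lambda\in\bigl(\lambda_1(0),\lambda^*(m)\bigr)$. I would first produce the two building blocks: Lemma~\ref{lem:subsolution} supplies $\varepsilon_0>0$ and $0\ll\psi\in D(A)$ (the principal eigenvector of $A$) such that $\varepsilon\psi$ is a subsolution of \eqref{eq:logeq} for every $\varepsilon\in(0,\varepsilon_0]$, and Proposition~\ref{prop:supersol} supplies $\delta,\gamma>0$, the corresponding principal eigenvector $0\ll\varphi\in D(A+\gamma m_\delta)$, and $\kappa_0>0$ so that $\kappa\varphi$ is a supersolution for every $\kappa\geq\kappa_0$.

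The main obstacle is to make the sub- and supersolution comparable so that the order interval on which $F$ acts is nonempty. Since $m_\delta=1_{\Omega_\delta}m$ has compact support in $\Omega$, Theorem~\ref{thm:ev-comparison} applies with the weight $\gamma m_\delta$ and yields a constant $c>0$ such that $\psi(x)\leq c\,\varphi(x)$ for all $x\in\Omega$. Choosing $\varepsilon\in(0,\varepsilon_0]$ small and then $\kappa\geq\kappa_0$ large enough that $\varepsilon c\leq\kappa$, I obtain
\begin{equation*}
  0<\varepsilon\psi\leq\kappa\varphi\quad\text{pointwise on }\Omega.
\end{equation*}
This is precisely the step where Kato's inequality substitutes for Hopf's boundary maximum principle, since we have no boundary regularity to guarantee the usual comparison via normal derivatives.

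Next I would set up $F$ so that it is monotone on $[\varepsilon\psi,\kappa\varphi]$. Since $\kappa\varphi\in BC(\Omega)$ by Theorem~\ref{thm:positivity-improving}, set $k:=\|\kappa\varphi\|_\infty$ and invoke \eqref{eq:restriction-positive} to choose $\omega>-\lambda_1(0)$ so large that the scalar function $\xi\mapsto(\lambda+\omega)\xi-m(x)g(x,\xi)\xi$ is non-decreasing on $[0,k]$ for every $x\in\bar\Omega$. Combined with the positivity of $(\omega I+A)^{-1}$ this makes $F$ increasing on $[\varepsilon\psi,\kappa\varphi]$. Rewriting the sub- and supersolution inequalities in resolvent form gives $\varepsilon\psi\leq F(\varepsilon\psi)$ and $F(\kappa\varphi)\leq\kappa\varphi$.

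Finally, order intervals in the Banach lattice $L^p(\Omega)$ are Dedekind complete, so every non-empty subset of $[\varepsilon\psi,\kappa\varphi]$ has a supremum in $[\varepsilon\psi,\kappa\varphi]$. Tarski's theorem then provides a fixed point $u\in[\varepsilon\psi,\kappa\varphi]$ of $F$, and by Remark~\ref{rem:fixed-point-equation} this $u$ belongs to $D(A)$ and solves \eqref{eq:logeq}. Since $u\geq\varepsilon\psi>0$, the solution is non-trivial and positive, completing the proof.
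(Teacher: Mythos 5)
Your proof is correct and follows essentially the same route as the paper: sub- and supersolutions from Lemma~\ref{lem:subsolution} and Proposition~\ref{prop:supersol}, ordered via the eigenvector comparison Theorem~\ref{thm:ev-comparison} applied to the compactly supported weight $\gamma m_\delta$, and then Tarski's fixed point theorem for the monotone map $F$ on the order interval. You even spell out two steps the paper leaves implicit (the translation of the sub/supersolution inequalities into $\varepsilon\psi\leq F(\varepsilon\psi)$, $F(\kappa\varphi)\leq\kappa\varphi$, and the correct bound $k=\|\kappa\varphi\|_\infty$ in \eqref{eq:restriction-positive}), so nothing further is needed.
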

\begin{proof}
  Let $\lambda\in\bigl(\lambda_1(0),\lambda^*(m)\bigr)$. Let $\psi$ be the principal eigenvector of $A$. By Lemma~\ref{lem:subsolution} there exists $\varepsilon>0$ such that $\varepsilon\psi$ is a subsolution of \eqref{eq:logeq}. Let $\varphi$ be the principal eigenvector of $A+\gamma m_\delta$ with $\gamma$ and $\delta>0$ as in Proposition~\ref{prop:supersol}. That proposition implies the existence of $\kappa_0$ such that $\kappa\psi$ is a supersolution of \eqref{eq:logeq} for all $\kappa\geq\kappa_0$. As $m_\delta$ has compact support in $\Omega$, Theorem~\ref{thm:ev-comparison} implies the existence of $\kappa>\kappa_0$ so that $\varepsilon\varphi<\kappa\psi$. Hence we have an ordered pair of sub- and supersolutions of \eqref{eq:logeq}. Solutions of \eqref{eq:logeq} correspond to fixed points of \eqref{eq:fixed-pt-eq}. As discussed before Tarski's Theorem the nonlinearity $F$ in \eqref{eq:fixed-pt-eq} is monotone on the order interval $[\varepsilon\varphi,\kappa\psi]$ if we choose $\omega>-\lambda_1(0)$ such that \eqref{eq:restriction-positive} holds for all $x\in\bar\Omega$ and all $\xi\in[0,\kappa\|\psi\|_\infty]$.  Now the existence of a solution follows from Tarski's fixed point Theorem~\ref{thm:tarski} since every non-empty set in $[\varepsilon\psi,\kappa\varphi]$ has a supremum in $L^p(\Omega)$, and that supremum lies in $[\varepsilon\psi,\kappa\varphi]$.
\end{proof}

\begin{remark}
  \label{rem:sub-solution-comparison}
  In the proof of Proposition~\ref{prop:exist-2} we have used a specific pair of ordered sub- and super-solutions to prove the existence of a solution $u_\lambda>0$ of \eqref{eq:logeq}. That specific pair can be replaced by any ordered pair of sub- and super-solutions $0<\underline u,\overline u\in E\cap L^\infty(\Omega)$. It will lead to the existence of a solution $w_\lambda\in[\underline u,\overline u]$ of \eqref{eq:logeq}. Proposition~\ref{prop:uniqueness} asserts that the positive solution is unique. In particular $w_\lambda = u_\lambda$ and hence $\underline u\leq u_\lambda\leq \overline u$.
\end{remark}

We finally prove that any non-trivial positive solution of \eqref{eq:logeq} is a linearly stable stationary solution of the parabolic equation
\begin{equation}
  \label{eq:logeq-parabolic}
  \begin{aligned}
    \frac{du}{dt}+Au & =\lambda u-g(\cdot\,,u)u &  & \text{for $t\geq 0$,} \\
    u(0)             & =u_0.                    &  &                       \\
  \end{aligned}
\end{equation}

\begin{proposition}[Stability of positive solutions]
  \label{prop:stability}
  Every non-trivial positive solution of \eqref{eq:logeq} is a linearly stable equilibrium solution of \eqref{eq:logeq-parabolic}.
\end{proposition}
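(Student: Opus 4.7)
The plan is to linearize the parabolic problem about the equilibrium $u_\lambda$, reduce linear stability to a strict inequality between principal eigenvalues, and then invoke the strong monotonicity already established in Proposition~\ref{prop:ev-monotone}. Writing $u=u_\lambda+v$ and expanding $g(x,u_\lambda+v)(u_\lambda+v)$ to first order yields the linearized equation
\begin{equation*}
  \frac{dv}{dt}+Lv=0,\qquad
  L:=A+m_1-\lambda,
\end{equation*}
where $m_1(x):=m(x)\bigl(g(x,u_\lambda(x))+\tfrac{\partial g}{\partial\xi}(x,u_\lambda(x))u_\lambda(x)\bigr)\in L^\infty(\Omega)$, since $u_\lambda\in BC(\Omega)$ by Proposition~\ref{prop:solution-bounded}. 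Linear stability then amounts to showing that $-L$ has negative spectral bound, i.e.\ $\lambda_1(L)>0$.

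The key step is the identification of $\lambda$ as a principal eigenvalue. By Proposition~\ref{prop:exist-1} we have $\lambda=\lambda_1\bigl(A+mg(\cdot,u_\lambda)\bigr)$. Hence
\begin{equation*}
  \lambda_1(L)
  =\lambda_1\bigl(A+m_1\bigr)-\lambda
  =\lambda_1\bigl(A+m_1\bigr)-\lambda_1\bigl(A+mg(\cdot,u_\lambda)\bigr),
\end{equation*}
so the task is to show strict inequality between these two principal eigenvalues. The weight difference is $m_1-mg(\cdot,u_\lambda)=m\tfrac{\partial g}{\partial\xi}(\cdot,u_\lambda)u_\lambda$. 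Assumptions (N\ref{g:increasing}) and (N2) ensure $m>0$ and $\tfrac{\partial g}{\partial\xi}(\cdot,u_\lambda)>0$ on $\Omega$, and Proposition~\ref{prop:solution-bounded} gives $u_\lambda(x)>0$ for every $x\in\Omega$. Thus $m_1\geq mg(\cdot,u_\lambda)$ with strict (non-a.e.-zero) inequality, and Proposition~\ref{prop:ev-monotone}(ii) yields
\begin{equation*}
  \lambda_1\bigl(A+m_1\bigr)>\lambda_1\bigl(A+mg(\cdot,u_\lambda)\bigr)=\lambda,
\end{equation*}
hence $\lambda_1(L)>0$.

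To conclude linear stability in the dynamical sense, I would use that $-L=-A-m_1+\lambda$ generates a holomorphic $C_0$-semigroup (bounded perturbation of the holomorphic generator $-A$) which inherits irreducibility and compact resolvent from $-A$ via Proposition~\ref{prop:ev-monotone}(ii). The second assertion of Proposition~\ref{prop:A-pev} then yields a spectral gap $\repart\mu\geq\lambda_1(L)+\delta$ for all $\mu\in\sigma(L)\setminus\{\lambda_1(L)\}$, and combined with $\lambda_1(L)>0$ this gives exponential decay of the linearized semigroup. The only minor obstacle is bookkeeping: one must verify that passage from $A$ to $A+m_1$ preserves the hypotheses of Proposition~\ref{prop:A-pev} (positivity, irreducibility, compact resolvent, holomorphy), which is exactly the content of Proposition~\ref{prop:ev-monotone}(ii) together with standard holomorphic perturbation theory.
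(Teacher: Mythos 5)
Your proposal is correct and takes essentially the same route as the paper: linearize about $u_\lambda$, identify $\lambda=\lambda_1\bigl(mg(\cdot\,,u_\lambda)\bigr)$ via \eqref{eq:pev-solution}, and apply the strict monotonicity of Proposition~\ref{prop:ev-monotone} to the positive perturbation $m\frac{\partial g}{\partial\xi}(\cdot\,,u_\lambda)u_\lambda$ to conclude that the principal eigenvalue of the linearization is positive. Your additional remarks on the spectral gap and exponential decay of the linearized semigroup go beyond the paper, which simply takes positivity of that principal eigenvalue as the meaning of linear stability, but they are consistent and do not alter the core argument.
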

\begin{proof}
  Let $u$ be a positive solution of \eqref{eq:logeq}. The linearization of \eqref{eq:logeq} about $u$ is
  \begin{equation*}
    Av+mg(\cdot\,,u)v-\lambda v+m\frac{\partial g}{\partial \xi}(\cdot\,,u)uv
    =0.
  \end{equation*}
  By \eqref{eq:pev-solution} we know that $\lambda_1\bigl(mg(\cdot\,,u)-\lambda\bigr)=0$. By assumption (N\ref{g:increasing}) and since $u(x)>0$ for all $x\in\Omega$ we have that $m\frac{\partial g}{\partial \xi}(\cdot\,,u)u>0$. Hence Proposition~\ref{prop:ev-monotone} implies that
  \begin{equation*}
    \lambda_1\Bigl(mg(\cdot\,,u)
    -\lambda+m\frac{\partial g}{\partial \xi}(\cdot\,,u)u\Bigr)
    >\lambda_1\bigl(mg(\cdot\,,u)-\lambda\bigr)
    =0,
  \end{equation*}
  which means that $u$ is linearly stable.
\end{proof}

\begin{proposition}[Monotonicity and differentiability]
  \label{prop:solution-increasing}
  Let $\lambda\in(\lambda_1(0),\lambda^*(m))$ and $u_\lambda>0$ the unique positive solution of \eqref{eq:logeq}. Then $[\lambda\to u_\lambda]\in   C^1\bigl((\lambda_1(0),\lambda^*(m)),E\bigr)$ is pointwise strictly increasing on $[\lambda_1(0),\lambda^*(m))$. Moreover, $u_\lambda\downarrow 0$ in $L^\infty(\Omega)\cap E$ as $\lambda\downarrow \lambda_1(0)$ and $\|u_\lambda\|_\infty\uparrow\infty$ as $\lambda\uparrow\lambda^*(m)$.
\end{proposition}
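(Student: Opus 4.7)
The three claims --- strict monotonicity, $C^1$-dependence, and the endpoint limits --- can each be handled by the linearisation technique from the uniqueness proof (Proposition~\ref{prop:uniqueness}) combined with Proposition~\ref{prop:ev-monotone} and the implicit function theorem.

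\textbf{Strict monotonicity.} Fix $\mu<\lambda$ in $(\lambda_1(0),\lambda^*(m))$, set $w:=u_\mu-u_\lambda\in D(A)$, and proceed as in the uniqueness proof: writing
\begin{equation*}
  g(\cdot,u_\mu)u_\mu-g(\cdot,u_\lambda)u_\lambda
  =g(\cdot,u_\mu)w+\tilde V w,\qquad
  \tilde V:=u_\lambda\int_0^1\frac{\partial g}{\partial\xi}(\cdot,u_\lambda+sw)\,ds,
\end{equation*}
one obtains
\begin{equation*}
  \bigl[A+mg(\cdot,u_\mu)+m\tilde V-\mu\bigr]w=(\mu-\lambda)u_\lambda.
\end{equation*}
Since $u_\lambda(x)>0$ on $\Omega$ and $\partial_\xi g>0$ by (N\ref{g:increasing}), $m\tilde V$ is strictly positive, so Proposition~\ref{prop:ev-monotone} together with $\lambda_1(mg(\cdot,u_\mu))=\mu$ from \eqref{eq:pev-solution} yields $\lambda_1(mg(\cdot,u_\mu)+m\tilde V)>\mu$. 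Hence $0$ lies in the resolvent set of the bracketed operator and the inverse is positivity improving pointwise by Theorems~\ref{thm:m-locally-smoothing} and~\ref{thm:positivity-improving}. Applying that inverse to the strictly negative right-hand side $(\mu-\lambda)u_\lambda$ produces $w(x)<0$ for every $x\in\Omega$, that is $u_\mu(x)<u_\lambda(x)$.

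\textbf{$C^1$-regularity.} I would apply the implicit function theorem to
\begin{equation*}
  F\colon\mathbb R\times D(A)\to E,\qquad F(\lambda,u):=Au-\lambda u+mg(\cdot,u)u,
\end{equation*}
with $D(A)$ carrying the graph norm. The Nemitskii map $u\mapsto mg(\cdot,u)u$ is $C^1$ in a neighbourhood of the bounded continuous function $u_\lambda$, justified by passing to the equivalent integral form \eqref{eq:fixed-pt-eq} and invoking ultra-contractivity so that $(\omega I+A)^{-1}$ (or a suitable iterate) maps $E$ into $L^\infty(\Omega)$. The partial Fr\'echet derivative at $(\lambda,u_\lambda)$ is
\begin{equation*}
  \partial_uF(\lambda,u_\lambda)v=Av-\lambda v+mg(\cdot,u_\lambda)v+m\frac{\partial g}{\partial\xi}(\cdot,u_\lambda)u_\lambda v,
\end{equation*}
which is exactly the stability linearisation of Proposition~\ref{prop:stability}, whose principal eigenvalue is strictly positive. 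Proposition~\ref{prop:A-pev} therefore puts $0$ in its resolvent set, so this derivative is a Banach-space isomorphism $D(A)\to E$, and the implicit function theorem produces a $C^1$ branch $\lambda\mapsto u_\lambda$ into $D(A)$, hence into $E$.

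\textbf{Endpoint limits.} The strict monotonicity gives a pointwise decreasing limit $u^*\geq 0$ as $\lambda\downarrow\lambda_1(0)$, dominated by $u_{\lambda_0}$ for any fixed $\lambda_0>\lambda_1(0)$. Lebesgue's dominated convergence in the fixed-point equation \eqref{eq:fixed-pt-eq} shows $u^*$ solves \eqref{eq:logeq} at $\lambda=\lambda_1(0)$; Proposition~\ref{prop:exist-1} forces $u^*\equiv 0$, whence $L^p$-convergence. Plugging this convergence back into \eqref{eq:fixed-pt-eq} and using the smoothing of $(\omega I+A)^{-1}$ (or an iterate) from $E$ into $L^\infty(\Omega)$ upgrades the convergence to $L^\infty(\Omega)$. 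For $\lambda\uparrow\lambda^*(m)$ I would argue by contradiction: if $\|u_{\lambda_n}\|_\infty$ stayed bounded along some sequence $\lambda_n\uparrow\lambda^*(m)$, then $mg(\cdot,u_{\lambda_n})\leq\gamma m$ for some uniform $\gamma$, and \eqref{eq:pev-solution} combined with Proposition~\ref{prop:ev-monotone} would give $\lambda_n=\lambda_1(mg(\cdot,u_{\lambda_n}))\leq\lambda_1(\gamma m)<\lambda^*(m)$, contradicting $\lambda_n\to\lambda^*(m)$.

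The main technical obstacle is keeping the Nemitskii calculus honest inside the implicit function theorem: one must confirm via the ultra-contractive and locally smoothing structure that $D(A)$ embeds (perhaps after iteration) into $L^\infty(\Omega)$ near $u_\lambda$, so that $u\mapsto mg(\cdot,u)u$ is genuinely $C^1$ from $D(A)$ into $E$. The same ultra-contractive bound underpins the $L^\infty$-upgrade at the left endpoint.
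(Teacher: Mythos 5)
Your strict monotonicity argument is exactly the paper's (the identity \eqref{eq:solutions-diff} with the roles of $\mu$ and $\lambda$ interchanged), and your blow-up argument as $\lambda\uparrow\lambda^*(m)$ is fine --- indeed slightly more direct than the paper's, since a uniform bound $\|u_{\lambda_n}\|_\infty\leq M$ immediately gives $\lambda_n=\lambda_1\bigl(mg(\cdot\,,u_{\lambda_n})\bigr)\leq\lambda_1(\gamma m)<\lambda^*(m)$. The genuine gap is the $C^1$ step. You propose the implicit function theorem for $F(\lambda,u)=Au-\lambda u+mg(\cdot\,,u)u$ on $\mathbb R\times D(A)$ and you flag yourself that this needs the Nemytskii map $u\mapsto mg(\cdot\,,u)u$ to be $C^1$ from a $D(A)$-neighbourhood of $u_\lambda$ into $E$. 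Under the standing hypotheses this cannot be repaired the way you suggest: ultra-contractivity here is only the qualitative statement $T(t)E\subseteq L^\infty(\Omega)$ for each $t>0$, with no decay rate for $\|T(t)\|_{\mathcal L(E,L^\infty)}$ as $t\downarrow 0$, so the Laplace-transform representation does not yield $(\omega I+A)^{-k}\in\mathcal L(E,L^\infty(\Omega))$ for any $k$, hence no embedding $D(A^k)\hookrightarrow L^\infty(\Omega)$. Even if such an embedding held for some $k\geq 2$, you could not set the IFT up on $D(A^k)$, because $u_\lambda$ need not lie in $D(A^2)$: $Au_\lambda=\lambda u_\lambda-mg(\cdot\,,u_\lambda)u_\lambda$, and multiplication by the merely bounded function $mg(\cdot\,,u_\lambda)$ does not preserve $D(A)$. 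Since a graph-norm neighbourhood of $u_\lambda$ in $D(A)$ contains unbounded functions and $g$ may grow arbitrarily fast, $F$ is not even well defined there, let alone $C^1$. The paper avoids all of this: it first proves continuity of $\lambda\mapsto u_\lambda$ by monotone/dominated convergence in the fixed point equation \eqref{eq:fixed-pt-eq}, and then reads off differentiability directly from the exact quotient identity $u_\mu-u_\lambda=(\mu-\lambda)\bigl(A+mg(\cdot\,,u_\mu)-\mu+mV_{\mu,\lambda}\bigr)^{-1}u_\lambda$, i.e.\ \eqref{eq:solutions-diff}, letting $\mu\to\lambda$ and using the resolvent-continuity Theorem~\ref{thm:ev-continuous}(i); this only requires the potentials to be uniformly bounded and convergent, which is available because all solutions lie in $BC(\Omega)$.

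A smaller instance of the same problem occurs at the left endpoint: your $L^\infty$-upgrade invokes smoothing of $(\omega I+A)^{-1}$ ``or an iterate'' from $E$ into $L^\infty(\Omega)$, which is unjustified for the same reason. The paper instead uses \eqref{eq:solution-bound}, $0<u_\lambda\leq e^{\lambda t}T(t)u_\lambda$ for one fixed $t>0$, together with $T(t)\in\mathcal L\bigl(E,L^\infty(\Omega)\bigr)$ by the closed graph theorem, so that $\|u_\lambda\|_\infty\leq C\|u_\lambda\|_E\to 0$; replacing your resolvent step by this single-operator bound completes that part of your argument.
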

\begin{proof}
  Suppose that $\lambda_1(0)<\lambda<\mu<\lambda^*(m)$. Set $w:=u_\mu-u_\lambda$. Subtracting the equations we see that
  \begin{equation*}
    \begin{split}
      Aw&=\mu u_\mu-\lambda u_\lambda -mg(\cdot\,,u_\mu)u_\mu+mg(\cdot\,,u_\lambda)u_\lambda\\ &=(\mu-\lambda)u_\lambda+\bigl(\mu-mg(\cdot\,,u_\mu)\bigr)(u_\mu-u_\lambda) -m\bigl(g(\cdot\,,u_\mu)-g(\cdot\,,u_\lambda)\bigr)u_\lambda\\ &=(\mu-\lambda)u_\lambda -\bigl(mg(\cdot\,,u_\mu)-\mu+mV_{\mu,\lambda}\bigr)w\\
    \end{split}
  \end{equation*}
  if we set
  \begin{equation*}
    V_{\mu,\lambda}
    :=\int_0^1\frac{\partial g}{\partial\xi}(\cdot\,,u_\lambda+sw)\,ds\,u_\lambda.
  \end{equation*}
  We deduce that
  \begin{equation}
    \label{eq:solutions-diff}
    \bigl(A+mg(\cdot\,,u_\mu)-\mu+mV_{\mu,\lambda}\bigr)(u_\mu-u_\lambda)
    =(\mu-\lambda)u_\lambda
  \end{equation}
  Arguing as in the proof of Proposition~\ref{prop:uniqueness} we see that $mV_{\mu,\lambda}>0$. Hence \eqref{eq:pev-solution} with $\lambda=\mu$ and Proposition~\ref{prop:ev-monotone} imply that $\lambda_1(mg(\cdot\,,u_\mu)-\mu+mV_{\mu,\lambda})>0$. As $u_\lambda>0$ it follows from \eqref{eq:solutions-diff} that $u_\mu(x)-u_\lambda(x)>0$ for all $x\in\Omega$ whenever $\mu-\lambda>0$. Hence $\lambda\mapsto u_\lambda(x)$ is strictly increasing for all $x\in\Omega$.

  We next prove the continuity of $\lambda\mapsto u_\lambda$. By the monotonicity $u_\mu\to v$ pointwise as $\mu\uparrow\lambda$ and hence in $E$ since $E$ has order continuous norm (or by the dominated convergence theorem). Moreover, $v\leq\|u_\lambda\|_\infty$. Taking $\omega>-\lambda_1(0)$ it follows from Remark~\ref{rem:fixed-point-equation} that $u_\mu=F(u_\mu)$.  Letting $\mu\to\lambda$ we deduce that $v=F(v)$ and thus $v=u_\lambda$.  As $u_\mu\leq u_{\lambda+\delta}$ if $\lambda<\mu<\lambda+\delta<\lambda^*(m)$, a similar argument applies to the right limit as $\mu\downarrow\lambda$, proving the continuity of $\lambda\mapsto u_\lambda$. To get convergence in $D(A)$ note that $mg(\cdot\,,u_\mu)$ is uniformly bounded for $\mu$ in a neighbourhood of $\lambda$ and that $mg(\cdot\,,u_\mu)\to mg(\cdot\,,u_\lambda)$ pointwise. Hence
  \begin{equation*}
    \lim_{\mu\to\lambda}Au_\mu
    =\lim_{\mu\to\lambda}\bigl(\mu-mg(\cdot\,,u_\mu)\bigr)u_\mu
    =\bigl(u_\lambda-mg(\cdot\,,u_\lambda)\bigr)u_\lambda
    =Au_\lambda
  \end{equation*}
  in $E$ and thus $u_\mu\to u_\lambda$ in $D(A)$ as $\mu\to\lambda$.

  We next investigate what happens in the limit cases. First consider the limit as $\lambda\downarrow\lambda_1(0)$. As in the proof of continuity $u_*(x):=\lim_{\lambda\downarrow\lambda_1(0)}u_\lambda(x)$ exists for all $x\in\Omega$. By the same argument as used to prove the continuity, $u_*\geq 0$ satisfies the equation $Au_*=\lambda_1(0)u_*-mg(\cdot\,, u_*)u_*$. Now it follows from Proposition~\ref{prop:exist-1} that $u_*=0$. It follows from Proposition~\ref{prop:solution-bounded} that
  \begin{equation*}
    0<u_\lambda\leq e^{t(\lambda_1(m)+1)}T(t)u_\lambda
  \end{equation*}
  for every $t>0$ and $\lambda_1(0)<\lambda<\min\{\lambda_1(0)+1,\lambda^*(m)\}$. By assumption $T(t)E\subseteq L^\infty(\Omega)$ and hence by the closed graph theorem $T(t)\in\mathcal L\bigl(E,L^\infty(\Omega)\bigr)$. Hence there exists a constant $C>0$ such that $\|u_\lambda\|_\infty\leq C\|u_\lambda\|_E$ and thus $\|u_\lambda\|_\infty\to 0$ as $\lambda\downarrow 0$.

  We proceed similarly if $\lambda\uparrow\lambda^*(m)$. We give a proof by contradiction assuming that $\lim_{\lambda\uparrow\lambda^*(m)}\|u_\lambda\|_\infty=M<\infty$. In that case $u^*(x):=\lim_{\lambda\uparrow\lambda_1(0)}u_\lambda(x)$ exists for all $x\in\Omega$. Again, the same argument as in case of continuity applies and $0<u^*\in E$ satisfies \eqref{eq:logeq} with $\lambda=\lambda^*(m)$. However, this is impossible by Proposition~\ref{prop:exist-1}.

  To prove the differentiability of $\lambda\mapsto u_\lambda$ note that due to the continuity
  \begin{equation}
    \label{eq:cont-potential}
    \lim_{\mu\to\lambda}\bigl(mg(\cdot\,,u_\mu)-\mu+mV_{\mu,\lambda}\bigr)
    =mg(\cdot\,,u_\lambda)-\lambda+m\frac{\partial
      g}{\partial\xi}(\cdot\,,u_\lambda)u_\lambda
  \end{equation}
  in $E$. By assumption (N\ref{g:increasing}) on $g$ and since $u(x)>0$ for all $x\in\Omega$ we have that
  \begin{equation*}
    m\frac{\partial g}{\partial\xi}(\cdot\,,u_\lambda)u_\lambda>0.
  \end{equation*}
  Hence, by \eqref{eq:pev-solution} and Proposition~\ref{prop:ev-monotone}
  \begin{equation*}
    \lambda_1\Bigl(mg(\cdot\,,u_\lambda)-\lambda+m\frac{\partial
      g}{\partial\xi}(\cdot\,,u_\lambda)u_\lambda\Bigr)>0
  \end{equation*}
  It follows from Theorem~\ref{thm:ev-continuous} that
  \begin{equation*}
    \lim_{\mu\to\lambda}\bigl(A+mg(\cdot\,,u_\mu)-\mu+mV_{\mu,\lambda}\bigr)^{-1}
    =\Bigl(A+mg(\cdot\,,u_\lambda)-\lambda+m\frac{\partial
      g}{\partial\xi}(\cdot\,,u_\lambda)u_\lambda\Bigr)^{-1}
  \end{equation*}
  in $\mathcal L(E)$. Rearranging \eqref{eq:solutions-diff} we see that for any $\lambda,\mu\in\bigl(\lambda_1(0),\lambda^*(m)\bigr)$
  \begin{equation*}
    u_\mu
    =u_\lambda
    +\left[\bigl(A+mg(\cdot\,,u_\mu)-\mu+mV_{\mu,\lambda}\bigr)^{-1}u_\lambda\right]
    (\mu-\lambda).
  \end{equation*}
  As $u_\lambda\in D(A)$ we deduce that
  \begin{equation*}
    \begin{split}
      \lim_{\mu\to\lambda}\bigl(A+mg(\cdot\,,u_\mu)
      &-\mu+mV_{\mu,\lambda}\bigr)^{-1}u_\lambda\\
      &=\Bigl(A+mg(\cdot\,,u_\lambda)-\lambda+m\frac{\partial
        g}{\partial\xi}(\cdot\,,u_\lambda)u_\lambda\Bigr)^{-1}u_\lambda\\
    \end{split}
  \end{equation*}
  in $D(A)$. Hence $\lambda\mapsto u_\lambda$ is differentiable with
  \begin{equation*}
    v_\lambda:=\frac{du_\lambda}{d\lambda}
    =\Bigl(A+mg(\cdot\,,u_\lambda)-\lambda+m\frac{\partial
      g}{\partial\xi}(\cdot\,,u_\lambda)u_\lambda\Bigr)^{-1}u_\lambda;
  \end{equation*}
  see \cite[Section~2]{arora:20:aaf}. In particular the derivative is the unique solution of the equation
  \begin{equation}
    \label{eq:log-eq-derivative}
    Av_\lambda+mg(\cdot\,,u_\lambda)v_\lambda-\lambda v_\lambda
    +m\frac{\partial g}{\partial\xi}(\cdot\,,u_\lambda)u_\lambda v_\lambda
    =u_\lambda
  \end{equation}
  in $E$ as expected.
\end{proof}

As a final remark we note that if we are prepared to work with solutions that are \emph{a priori} in $L^\infty(\Omega)$ rather than proving it in Proposition~\ref{prop:solution-bounded}, then we do not really need the ultra-contractivity of the semigroup generated by $-A$. The eigenvector comparison theorem in Section~\ref{sec:eigenvector-comparison} does not rely on that, it only relies on the local smoothing property. However, we do need to know that the eigenvectors used to construct to sub- and super-solutions in Proposition~\ref{prop:supersol} and Lemma~\ref{lem:subsolution} are in $L^\infty(\Omega)$. Then the arguments in the proof of Proposition~\ref{prop:exist-2} still work.  We did not set up the theory that way since, apart from the ultra-contractivity of the semigroup, we are not aware of any other criteria that guarantee that the principal eigenvectors are in $L^\infty(\Omega)$.

\section{The Laplacian with diverse boundary conditions}
\label{sec:examples-laplacian}
In this section we consider an open, bounded connected set $\Omega\subseteq\mathbb R^N$ and realisations of the Laplacian with Dirichlet-, Robin- and Neumann boundary conditions. We will show that the operator generates a $C_0$-semigroup which satisfies the requirements of Theorem~\ref{thm:main}: positivity, irreducibility, compactness, smoothing and ultra-contractivity. Only for the Neumann Laplacian we need some weak regularity conditions on the boundary of $\Omega$ to guarantee the ultra-contractivity.

\subsection{The Laplacian with Dirichlet boundary conditions}
\label{sec:dirichlet-laplacian}
Let $\Omega$ be open, bounded and connected. The \emph{Laplacian with Dirichlet boundary conditions} or briefly the \emph{Dirichlet Laplacian} is the operator $\Delta_D$ on $L^2(\Omega)$ defined by
\begin{equation}
  \label{eq:dirichlet-laplacian}
  \begin{aligned}
    D(\Delta_D) & =\{u\in H_0^1(\Omega)\colon\Delta u\in L^2(\Omega)\}, \\
    \Delta_Du   & :=\Delta u\qquad\text{for $u\in D(\Delta_D)$.}
  \end{aligned}
\end{equation}
It is a self-adjoint dissipative operator and so $\Delta_D$ generates a contractive $C_0$-semigroup $(T_D(t))_{t\geq 0}$ on $L^2(\Omega)$. This semigroup fulfils the hypotheses of Theorem~\ref{thm:main}. In fact, the following assertions are true.

\begin{theorem}[Dirichlet Laplacian]
  \label{thm:dirichlet-laplacian}
  The $C_0$-semigroup $(T_D(t))_{t\geq 0}$ is holomorphic, positive, irreducible, locally smoothing and ultra-contractive.
\end{theorem}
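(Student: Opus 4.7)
My strategy is to work first on $L^2(\Omega)$ via the symmetric, closed, positive form $\aaa(u,v):=\int_\Omega\nabla u\cdot\nabla v\,dx$ with form domain $V:=H_0^1(\Omega)$, verify each property there using form methods, and finally transfer to $L^p(\Omega)$ by interpolation. Since $-\Delta_D$ is self-adjoint and non-negative, the spectral theorem gives that $\Delta_D$ generates a bounded holomorphic $C_0$-semigroup on $L^2(\Omega)$. Positivity follows from the first Beurling-Deny criterion \cite[Theorem~2.6]{ouhabaz:05:ahe}: one verifies $u\in H_0^1(\Omega)\Rightarrow u^+\in H_0^1(\Omega)$ and $\aaa(u^+,u^-)=0$, both classical. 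For irreducibility I apply \cite[Theorem~2.10]{ouhabaz:05:ahe}, which reduces matters to showing that any measurable $M\subseteq\Omega$ for which $1_M u\in H_0^1(\Omega)$ holds for every $u\in H_0^1(\Omega)$ must be, up to a null set, clopen in $\Omega$; by connectedness of $\Omega$ this forces $M=\emptyset$ or $M=\Omega$.

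\textbf{Ultra-contractivity.} For $u\in H_0^1(\Omega)$ extended by zero to $\mathbb R^N$, Nash's inequality $\|u\|_2^{2+4/N}\leq C_N\|\nabla u\|_2^2\|u\|_1^{4/N}$ combined with the standard differential inequality for $t\mapsto\|T_D(t)u\|_2^2$ yields the on-diagonal bound $\|T_D(t)\|_{L^1\to L^\infty}\leq C t^{-N/2}$ for small $t>0$, as in \cite[Chapter~2]{davies:89:hks}. The Beurling-Deny criteria also show the semigroup is sub-Markovian, hence contractive on every $L^p(\Omega)$ with $1\leq p\leq\infty$; by Riesz-Thorin interpolation it extends consistently to a family of $C_0$-semigroups on $L^p(\Omega)$ for $1\leq p<\infty$ satisfying $T_D(t)L^p(\Omega)\subseteq L^\infty(\Omega)$ for $t>0$, and Stein interpolation transfers holomorphy to every such $L^p$-realization.

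\textbf{Locally smoothing.} For the inclusion $T_D(t)L^p(\Omega)\subseteq C(\Omega)$, interior parabolic regularity (de Giorgi-Nash-Moser) applied to the weak solution $u(t,\cdot)=T_D(t)f$ of $\partial_t u=\Delta u$ on $(0,\infty)\times\Omega$ yields $u(t,\cdot)\in C(\Omega)$ for every $t>0$. For the pointwise non-vanishing condition \eqref{eq:zero-regular}, I take $w=\psi_1$, the $L^2$-principal eigenfunction of $-\Delta_D$, so that $T_D(s)\psi_1=e^{-\lambda_1 s}\psi_1$ for all $s>0$. Interior elliptic regularity gives $\psi_1\in C(\Omega)$, and the Harnack inequality applied to the non-negative weak solution $\psi_1$ of $-\Delta\psi_1=\lambda_1\psi_1$ on the connected open set $\Omega$ forces $\psi_1(x)>0$ at every $x\in\Omega$, verifying \eqref{eq:zero-regular}.

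I expect this last step -- promoting almost-everywhere positivity of $\psi_1$ to strict positivity at every point of $\Omega$ -- to be the main subtlety, as it genuinely requires a Harnack-type argument rather than pure form methods. All other properties reduce to essentially standard material in \cite{ouhabaz:05:ahe} and \cite{davies:89:hks}.
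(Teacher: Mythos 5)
Your proposal is correct, but in two key places it takes a genuinely different route from the paper. For ultra-contractivity you run the Nash-inequality iteration, whereas the paper simply dominates $(T_D(t))_{t\geq 0}$ by the Gaussian semigroup on $\mathbb R^N$ via extension by zero, which gives $T_D(t)L^2(\Omega)\subseteq L^\infty(\Omega)$ in one line; both are standard, yours is more self-contained in spirit, the paper's is shorter. For interior continuity you invoke parabolic de Giorgi--Nash--Moser, while the paper bootstraps through holomorphy: $T_D(t)L^2\subseteq D(\Delta_D^k)\subseteq H^{2k}_{\loc}(\Omega)\subseteq C(\Omega)$ for $k>N/4$; again both work. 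The real divergence is in verifying \eqref{eq:zero-regular}: you take the global principal eigenfunction $\psi_1$ and use the Harnack inequality (plus connectedness) to get $\psi_1(x)>0$ everywhere, whereas the paper deliberately avoids Harnack-type arguments -- consistent with its stated aim of replacing maximum-principle machinery -- and instead compares $T_D(t)$ with the Dirichlet semigroup on a small cube $x_0+(-r,r)^N\subseteq\Omega$, whose principal eigenfunction is an explicit product of sines, so that the domination $0\leq S(t)u\leq T_D(t)\tilde u$ yields $[T_D(t)\tilde\varphi](x_0)\geq e^{-\lambda_0 t}\varphi(x_0)>0$ by elementary means. Your Harnack argument is valid (and note that connectedness of $\Omega$ is genuinely needed there, and that $\psi_1\geq 0$ should be justified, e.g.\ by the Rayleigh-quotient argument applied to $|\psi_1|$, with compactness of the resolvent coming from the compact embedding $H_0^1(\Omega)\hookrightarrow L^2(\Omega)$ on any bounded open set), but it relies on heavier local PDE input than the paper wants to use; the paper's cube comparison is the more elementary alternative. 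Finally, the $L^p$-interpolation discussion is harmless but not needed: the theorem as stated concerns the semigroup on $L^2(\Omega)$.
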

\begin{proof}
  Positivity follows from the Beurling-Deny criterion; see \cite[Section~1.3]{davies:89:hks} or \cite[Theorem~2.7]{ouhabaz:05:ahe}. An elegant criterion due to Ouhabaz implies irreducibility; see \cite[Theorem~2.9]{ouhabaz:05:ahe}. The semigroup is dominated by the Gaussian semigroup on $\mathbb R^N$, that is,
  \begin{equation*}
    |T_D(t)u|\leq G(t)|\tilde u|
  \end{equation*}
  for all $t>0$, where $\tilde u$ is the extension of $u$ to $\mathbb R^N$ by zero, see for instance \cite[Example~7.4.1(a)]{arendt:04:see}. The Gaussian semigroup given by
  \begin{equation*}[G(t)u](x):=(4\pi t)^{-N/2}\int_{\mathbb R^N}e^{-|y-x|^2/4t}u(y)\,dy
  \end{equation*}
  for all $x\in\mathbb R^N$.  Thus $T_D(t)L^2(\Omega)\subseteq L^\infty(\Omega)$ for all $t>0$. As the semigroup is holomorphic one has $T(t)L^2(\Omega)\subseteq D(\Delta_D^k)$ for all $k\in\mathbb N$ and all $t>0$. It follows from elliptic regularity that $D(\Delta_D^k)\subseteq H_{\loc}^{2k}(\Omega)$ for all $k\in\mathbb N$, see for instance \cite[Theorem~6.59]{arendt:22:pde}. By standard Sobolev embedding theorems we have $H_{\loc}^{2k}(\Omega)\subseteq C(\Omega)$ if $k>N/4$; see for instance \cite[Theorem~6.58]{arendt:22:pde}. This shows that $T(t)L^2(\Omega)\subseteq C(\Omega)$ for all $t>0$. In order to show that $(T_D(t))_{t\geq 0}$ is locally smoothing it remains to show that for each $x\in\Omega$ there exists $t>0$ and $0<u\in L^2(\Omega)$ such that $[T(t)u](x)>0$, that is, property \eqref{eq:zero-regular}.  We prove this property by comparison with a much easier case. Assume that $\omega$ is an open set with $\bar\omega\subseteq\Omega$. Let $(S(t))_{t\geq 0}$ denote the $C_0$-semigroup generated by the Dirichlet Laplacian on $L^2(\omega)$. Then
  \begin{equation}
    \label{eq:dirichlet-domination}
    0\leq S(t)u\leq T_D(t)\tilde u
  \end{equation}
  for all $0\leq u\in L^2(\omega)$, where $\tilde u$ is the extension of $u$ to $\Omega$ by zero. The proof of \eqref{eq:dirichlet-domination} is similar to that in \cite[Example~7.4.1(a)]{arendt:04:see} and can be omitted. This domination together with an explicit solution yields \eqref{eq:zero-regular} in the following way: Given $x_0=(x_{01},\dots,x_{0N})\in\Omega$ we choose a cube $\omega=x_0+(-r,r)^N$ with $r>0$ such that $\bar\omega\subseteq\Omega$. Set $\lambda_0:=\left(\pi/2r\right)^2$ and
  \begin{equation*}
    \varphi(x):=\prod_{k=1}^N\sin\left(\sqrt{\lambda_0}(x_k-x_{0k})\right)
  \end{equation*}
  for all $x=(x_1,\dots,x_N)\in\omega$. By an elementary calculation $S(t)\varphi=e^{-\lambda_0 t}\varphi$ for all $t\geq 0$. Clearly $\varphi(x_0)=1$ and thus by \eqref{eq:dirichlet-domination} we have $[T_D(t)\tilde\varphi](x_0)>0$ for all $t>0$.  As this applies to any choice of $x_0\in\Omega$ it follows that $(T_D(t))_{t\geq 0}$ satisfies \eqref{eq:zero-regular}.
\end{proof}

\subsection{The Laplacian with Robin boundary conditions}
\label{sec:robin-laplacian}
In this section we consider the Robin-Laplacian on a bounded, connected
and open set $\Omega\subseteq\mathbb R^N$ satisfying a very mild
condition at the boundary $\Gamma:=\partial\Omega$, namely
\begin{equation}
  \label{eq:Gamma-finite}
  \mathcal H^{N-1}(\Gamma)<\infty,
\end{equation}
where $\mathcal H^{N-1}$ is the $(N-1)$-dimensional Hausdorff measure. The Robin-Laplacian has been defined in \cite{daners:00:rbv} on arbitrary domains via the method of forms. The problem that occurs in this situation is that the natural form associated with the Robin-Laplacian is not closable in general, see \cite[Section~4]{arendt:03:lrb}. In \cite{daners:00:rbv} and later in \cite{arendt:03:lrb} this problem was solved by establishing a suitable closed form. Meanwhile, in \cite{arendt:12:sfd} it has been shown how a self-adjoint operator can be associated with any accretive, symmetric form even if it is not closable. We use this more recent approach here since it allows a description of the domain of the operator which realises in a precise way the Robin boundary condition
\begin{equation*}
  \frac{\partial u}{\partial\nu}+\beta u|_\Gamma=0,
\end{equation*}
where formally $\nu$ is the outer unit normal to $\Omega$. For the definition of the operator we need some preparation. The restriction $\sigma$ of $\mathcal H^{N-1}$ to the Borel sets of $\Gamma$ defines a Borel measure on $\Gamma$. Condition \eqref{eq:Gamma-finite} implies that $C(\Gamma)\subseteq L^2(\Gamma):=L^2(\Gamma,\sigma)$. Let $u\in H^1(\Omega)$. By
\begin{multline}
  \label{eq:trace-set}
  \Tr(u):=\bigl\{b\in L^2(\Gamma)\colon\text{There exist }u_n\in
  H^1(\Omega)\cap C(\bar\Omega)\\
  \text{with }u_n\to u\text{ in }H^1(\Omega)
  \text{ and }u_n|_\Gamma\to b$ in $L^2(\Gamma)\bigr\}
\end{multline}
we denote the set of all \emph{approximate traces} of $u$. If $\Omega$ has Lipschitz boundary, then for each $u\in H^1(\Omega)$ there exists exactly one $\tr(u)\in L^2(\Gamma)$ such that $\Tr(u)=\{\tr(u)\}$. In general $\Tr(u)$ may be empty or an infinite set. An extensive discussion and uniqueness criteria for the approximate trace are given in \cite{sauter:20:uat}. Next we define the normal derivative by means of Green's formula to be valid.

\begin{definition}[Normal derivative]
  \label{def:normal-derivative}
  Let $u\in H^1(\Omega)$ such that $\Delta u\in L^2(\Omega)$ in the sense of distributions. A function $b\in L^2(\Omega)$ is called the \emph{normal derivative of $u$} if
  \begin{equation}
    \label{eq:green-formula}
    \int_\Omega(\Delta u)v\,dx+\int_\Omega\nabla u\cdot\nabla
    v\,dx=\int_\Gamma bv\,d\sigma
  \end{equation}
  for all $v\in H^1(\Omega)\cap C(\bar\Omega)$. If such $b$ exists we write
  \begin{equation*}
    \partial_\nu u:=\frac{\partial u}{\partial\nu}:=b
  \end{equation*}
\end{definition}

Since $\sigma(\Gamma\cap B(z,r))>0$ for all $z\in\Gamma$ and $r>0$, the function $b$ in \eqref{eq:green-formula} is unique if it exists, so the above definition is justified.

Now let $\beta\in L^\infty(\Gamma)$ be such that $\beta(z)\geq\delta>0$ for all $z\in\Gamma$ for some constant $\delta>0$. We define the \emph{Laplacian $\Delta_\beta$ with Robin boundary conditions} or shortly the \emph{Robin Laplacian} as follows.
\begin{equation}
  \label{eq:robin-laplacian}
  \begin{aligned}
    D(\Delta_\beta) & :=\bigl\{u\in H^1(\Omega)\colon
    \Delta u\in L^2(\Omega),\exists u_\Gamma\in\Tr(u)\text{ with
    }\partial_\nu u+\beta u_\Gamma=0\bigr\},                           \\
    \Delta_\beta u  & :=\Delta u\qquad\text{for }u\in D(\Delta_\beta).
  \end{aligned}
\end{equation}
Since the normal derivative is unique and since $\beta(z)\neq 0$ for all
$z\in\Gamma$ there is at most one $u_\Gamma\in\Tr(u)$ such that
$\partial_\nu u+\beta u_\Gamma=0$. We have the following theorem.

\begin{theorem}[Robin Laplacian]
  \label{thm:robin-laplacian}
  The operator $\Delta_\beta$ is self-adjoint and generates a contractive $C_0$-semigroup $(T_\beta(t))_{t\geq 0}$ on $L^2(\Omega)$. This semigroup is holomorphic, positive, irreducible, ultra-contractive and locally smoothing.
\end{theorem}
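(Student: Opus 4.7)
The plan is to realise $-\Delta_\beta$ as the self-adjoint operator associated with a symmetric accretive form via the relaxation technique of \cite{arendt:12:sfd}, which handles forms that need not be closable on $L^2(\Omega)$. I would set $V:=H^1(\Omega)\cap C(\bar\Omega)$ and define
\begin{equation*}
  \aaa(u,v):=\int_\Omega\nabla u\cdot\nabla v\,dx+\int_\Gamma\beta\, u|_\Gamma\, v|_\Gamma\,d\sigma,
\end{equation*}
which is well defined on $V$ thanks to \eqref{eq:Gamma-finite} and the continuity of $u,v$ up to the boundary. Since $\aaa$ is symmetric, positive and densely defined in $L^2(\Omega)$, the construction of \cite{arendt:12:sfd} yields a self-adjoint non-positive operator $\hat A$ generating a contractive holomorphic $C_0$-semigroup $(T_\beta(t))_{t\geq 0}$. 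The main bookkeeping step is to identify $\hat A=\Delta_\beta$: one inclusion uses Green's formula \eqref{eq:green-formula} to show that every $u\in D(\Delta_\beta)$ lies in the form domain with $\hat Au=-\Delta u$, while the reverse inclusion extracts the approximate trace $u_\Gamma$ from \eqref{eq:trace-set} and the normal derivative $\partial_\nu u$ from the action of $\hat A$ tested against elements of $V$.

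For positivity I would invoke the first Beurling--Deny criterion, transferred through the relaxation: $V$ is stable under $u\mapsto u^+$ and $\aaa(u^+,u^-)=0$ since $\nabla u^+\cdot\nabla u^-=0$ almost everywhere and $u^+|_\Gamma\cdot u^-|_\Gamma=0$ pointwise on $\Gamma$. Irreducibility comes from \cite[Theorem~2.10]{ouhabaz:05:ahe}: invariance of an ideal $1_\omega L^2(\Omega)$ forces multiplication by $1_\omega$ to preserve $V\subseteq H^1(\Omega)$, which for connected $\Omega$ is possible only when $\omega$ is essentially empty or $\Omega$. Local smoothing then follows exactly as in the proof of Theorem~\ref{thm:dirichlet-laplacian}: holomorphy puts $T_\beta(t)L^2(\Omega)$ into $D(\hat A^k)\subseteq H^{2k}_{\loc}(\Omega)\subseteq C(\Omega)$ for $k>N/4$ by interior elliptic regularity and Sobolev embedding, and the pointwise positivity condition \eqref{eq:zero-regular} at each $x_0\in\Omega$ is obtained via the domination $S(t)\leq T_\beta(t)$, where $S(t)$ is the Dirichlet semigroup on a small cube $\omega$ with $\bar\omega\subseteq\Omega$ and $x_0\in\omega$, using the explicit product-of-sines eigenfunction.

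The main obstacle I expect is establishing ultra-contractivity. With $\mathcal H^{N-1}(\Gamma)<\infty$ and $\beta\geq\delta>0$, the strategy is to dominate $T_\beta(t)$ by the Gaussian heat semigroup $(G(t))_{t\geq 0}$ on $\mathbb R^N$ using the form-domination criteria of \cite[Section~2.2]{ouhabaz:05:ahe}. Concretely, one would like to show that for $0\leq u\in V$ the zero extension $\tilde u$ to $\mathbb R^N$ satisfies $|T_\beta(t)u|\leq G(t)\tilde u$, whence $T_\beta(t)L^2(\Omega)\subseteq L^\infty(\Omega)$ for $t>0$. The delicate point is that $\tilde u$ need not belong to $H^1(\mathbb R^N)$, so the boundary integral in $\aaa$ must absorb the defect produced by the jump across $\Gamma$ when comparing the two forms; this is precisely where $\beta\geq\delta>0$ together with \eqref{eq:Gamma-finite} play their decisive role. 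Once ultra-contractivity is available, Proposition~\ref{prop:bounded-compact} yields compactness of $T_\beta(t)$, and the semigroup then meets all the hypotheses required by Theorem~\ref{thm:main}.
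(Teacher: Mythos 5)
Your construction of $\Delta_\beta$ via the relaxation of the non-closable form on $H^1(\Omega)\cap C(\bar\Omega)$, the identification of the associated operator, and the local smoothing argument (interior elliptic regularity plus domination over a Dirichlet semigroup to get \eqref{eq:zero-regular}) all follow the paper. The genuine gap is your route to ultra-contractivity: the claimed domination $|T_\beta(t)u|\le G(t)\tilde u$ by the Gaussian semigroup is false in general, no matter how large $\beta\ge\delta>0$ is, so no boundary term can ``absorb the defect''. The obstruction is structural, not quantitative: the form-domination criterion would require the closed form domain of the Robin Laplacian, extended by zero, to sit in $H^1(\mathbb R^N)$ as an ideal, but $1_\Omega$ lies in that form domain while $\tilde 1_\Omega\notin H^1(\mathbb R^N)$. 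Concretely, on a half-space the Robin heat kernel is obtained by reflection and equals the Gaussian kernel plus a positive image term (minus a correction vanishing as $t\downarrow 0$), so it exceeds the Gaussian kernel near the boundary for small times; moreover, the correct $L^2\to L^\infty$ bound for Robin semigroups on rough domains is of order $t^{-\mu/4}$ with $\mu=2N$, i.e.\ $t^{-N/2}$, strictly worse than the $t^{-N/4}$ that Gaussian domination would yield. So the central analytic difficulty of the theorem is not addressed by your plan.

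What the paper does instead: it first shows $(T_\beta(t))_{t\geq 0}$ is submarkovian (Proposition~\ref{prop:robin-submarkovian}), then identifies the closed accretive symmetric form associated with $-\Delta_\beta$ as the closure of $\aaa_r$, the form whose boundary integral only involves the regular part $\Gamma_r$ of $\Gamma$; this uses the structure result $\Tr(0)=L^2(\Gamma,\Gamma_s)$ (Proposition~\ref{prop:trace}) and Lemma~\ref{lem:domain-trace}. It then transfers Maz'ya's inequality \eqref{eq:mazya-inequality}, together with $\beta\geq\delta>0$, to this closed form by approximation and Fatou, and concludes via the Davies characterisation in Theorem~\ref{thm:ultra-contractive}. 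Note that this identification step is exactly where your worry about the ``defect'' reappears: one only knows $\aaa_c\leq\aaa$, with possible strict inequality, so even the Maz'ya route requires a description of the closed form. A related, smaller issue in your sketch: the Beurling--Deny criterion and Ouhabaz's irreducibility criterion are formulated for closed forms, whereas $\aaa$ on $H^1(\Omega)\cap C(\bar\Omega)$ is in general not even closable and the closed form actually associated with the generator is the closure of $\aaa_r$, not $\aaa$; so these criteria cannot be invoked off the shelf. The paper circumvents this by proving positivity directly at the resolvent level with approximating sequences for the trace, and obtains irreducibility (and condition \eqref{eq:zero-regular}) from the domination $0\leq T_D(t)\leq T_\beta(t)$ of Proposition~\ref{prop:robin-domination}, a route you could also adopt.
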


In order to prove the ultra-contractivity we need the hypothesis that $\beta\geq\delta>0$, see the discussion on the Neumann Laplacian in Section~\ref{sec:neumann-laplacian} below, where $\beta=0$. Note however that we do not assume any regularity of the boundary $\Gamma$ of $\Omega$ besides the condition $\sigma(\Gamma)<\infty$. Even the latter condition is to keep the exposition simple and could be removed as done in \cite{arendt:03:lrb,daners:00:rbv}.

For the proof of Theorem~\ref{thm:robin-laplacian} we use the following generation theorem from \cite[Theorem~3.2 and Remark~3.5]{arendt:12:sfd}.

\begin{theorem}
  \label{thm:abstract-generation}
  Let $H$ be real Hilbert space, $D(\aaa)\subseteq H$ a dense subspace and let $\aaa\colon D(\aaa)\times D(\aaa)\to\mathbb R$ be bilinear. We assume that $\aaa$ is symmetric, that is, $\aaa(u,v)=\aaa(v,u)$ for all $u,v\in D(a)$ and accretive, that is, $\aaa(u,u)\geq 0$ for all $u\in D(\aaa)$.  Then there exists a unique self-adjoint operator $A$ on $H$ whose graph is given by the set of $(u,f)\in H\times H$ such that there exist $u_n\in D(\aaa)$ with $u_n\to u$ in $H$, $a(u_n-u_m)\to 0$ and $\aaa(u_n,v)\to\langle f,v\rangle$ for all $v\in D(\aaa)$ as $n,m\to\infty$ .
\end{theorem}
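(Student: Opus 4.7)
The approach is the $j$-elliptic form construction, which bypasses any need for closability of $\aaa$. The obstruction to the classical Friedrichs--Kato route is precisely that $\aaa$ is not assumed closable, so the metric completion of $D(\aaa)$ in the form norm cannot in general be identified with a subspace of $H$; one must instead admit a (possibly non-injective) bounded map back into $H$.

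\textbf{Step 1 (Form completion).} On $D(\aaa)$ define the inner product
\begin{equation*}
  (u,v)_V := \aaa(u,v) + \langle u,v\rangle_H,
\end{equation*}
which is positive definite by accretivity and bilinear-symmetric by hypothesis. Let $V$ denote the abstract Hilbert space completion. Since $\|u\|_H \leq \|u\|_V$, the inclusion $D(\aaa) \hookrightarrow H$ extends to a bounded linear map $j\colon V \to H$ of norm $\leq 1$, which need not be injective when $\aaa$ fails to be closable. The estimate $|\aaa(u,v)| \leq \sqrt{\aaa(u,u)}\sqrt{\aaa(v,v)} \leq \|u\|_V \|v\|_V$ (Cauchy--Schwarz for the positive semi-definite form $\aaa$) shows $\aaa$ extends continuously to a symmetric accretive form $\tilde\aaa$ on $V$, and by construction
\begin{equation*}
  \tilde\aaa(u,u) + \|j(u)\|_H^2 = \|u\|_V^2 \qquad \text{for every } u \in V.
\end{equation*}

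\textbf{Step 2 (Variational definition of $A$).} Declare $(x,f) \in \graph(A)$ iff there exists $u \in V$ with $j(u) = x$ and
\begin{equation*}
  \tilde\aaa(u,v) = \langle f, j(v)\rangle_H \qquad \text{for all } v \in V.
\end{equation*}
To see this prescribes a single-valued operator, suppose $u_1,u_2 \in V$ both represent $x$ with potentials $f_1,f_2$; then $w := u_1 - u_2$ satisfies $j(w) = 0$ and $\tilde\aaa(w,v) = \langle f_1-f_2,\, j(v)\rangle_H$. Testing at $v=w$ gives $\tilde\aaa(w,w) = 0$, whence the identity from Step 1 forces $\|w\|_V = 0$; thus $w = 0$ in $V$, the representative $u$ is unique, and consequently $f_1 = f_2$.

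\textbf{Step 3 (Self-adjointness).} Given $f \in H$, the functional $v \mapsto \langle f, j(v)\rangle_H$ is continuous on $V$, so by the Riesz representation theorem there is a unique $u \in V$ with $(u,v)_V = \langle f, j(v)\rangle_H$ for all $v \in V$. Expanding the left-hand side gives $\tilde\aaa(u,v) = \langle f - j(u),\, j(v)\rangle_H$, so $x := j(u) \in D(A)$ and $x + Ax = f$. This proves $I + A$ is surjective, and an orthogonality argument (if $y \perp D(A)$, solve the Riesz problem with datum $y$ to obtain $R(y) \in V$, then $\|R(y)\|_V^2 = \langle y, j(R(y))\rangle = 0$, forcing $y = 0$) shows $D(A)$ is dense in $H$. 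Symmetry of $A$ is immediate from symmetry of $\tilde\aaa$, and $\langle Ax, x\rangle_H = \tilde\aaa(u,u) \geq 0$; a densely defined symmetric positive operator with $I+A$ surjective is self-adjoint.

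\textbf{Step 4 (Graph description and uniqueness).} A sequence $(u_n) \subseteq D(\aaa)$ with $u_n \to u$ in $H$ and $\aaa(u_n-u_m) \to 0$ is $V$-Cauchy, hence converges to some $\tilde u \in V$ with $j(\tilde u) = u$; the condition $\aaa(u_n,v) \to \langle f,v\rangle_H$ for every $v \in D(\aaa)$ passes to the limit as the variational identity for $\tilde u$ on the dense subspace $D(\aaa) \subseteq V$, hence on all of $V$ by continuity. This places $(u,f) \in \graph(A)$. Conversely, any $(u,f) \in \graph(A)$ with representative $\tilde u \in V$ arises this way: approximate $\tilde u$ in $V$-norm by $u_n \in D(\aaa)$ and all three conditions are verified. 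Uniqueness of $A$ is automatic from the explicit graph characterisation. The main technical hurdle is the single-valuedness in Step 2: the non-injectivity of $j$ makes it a priori unclear that the prescribed relation is a graph, and the identity from Step 1 --- which couples coercivity with the definition of $j$ --- is precisely what resolves this.
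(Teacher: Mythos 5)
Your proof is correct, and it is essentially the argument of the source the paper itself relies on: the paper quotes this theorem from Arendt and ter Elst \cite[Theorem~3.2 and Remark~3.5]{arendt:12:sfd} without proof, and your construction (completion of $D(\aaa)$ in the form norm, the possibly non-injective contraction $j\colon V\to H$, the identity $\tilde\aaa(u,u)+\|j(u)\|_H^2=\|u\|_V^2$ replacing closability, and the variational definition of the graph) is precisely the $j$-elliptic form approach used there. The only points left implicit are the two places where density of $D(\aaa)$ in $H$ is invoked (to get $f_1=f_2$ in Step 2 and $y=0$ in Step 3), both immediate.
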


We call the operator $A$ given by the above theorem the \emph{operator associated with $\aaa$} and write $A\sim \aaa$. For simplicity we let $\aaa(u):=\aaa(u,u)$ for all $u\in D(\aaa)$.

\begin{remark}
  (a) The operator $A$ in Theorem~\ref{thm:abstract-generation} is single-valued.

  (b) The form $\aaa$ may not be \emph{closable}, that is, it might
  happen that there exist $u_n\in D(\aaa)$ such that $u_n\to 0$, $\aaa(u_n-u_m)\to 0$ as $n,m\to\infty$, but $\aaa(u_n)\not\to 0$ as $n\to\infty$. However, in that case there does not exist any $y\in H$ such that $\aaa(u_n,v)\to \langle y,v\rangle_H$ for all $v\in D(\aaa)$
\end{remark}

We will use Theorem~\ref{thm:abstract-generation} to prove the following proposition.

\begin{proposition}
  \label{prop:robin-laplacian}
  The operator $-\Delta_\beta$ is self-adjoint and monotone, that is, $\langle\Delta_\beta u,u\rangle_H\leq 0$ for all $u\in D(\Delta_\beta)$.
\end{proposition}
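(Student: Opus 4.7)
The strategy is to apply Theorem~\ref{thm:abstract-generation} to the natural symmetric Robin form. Define
\begin{equation*}
  \aaa(u,v) := \int_\Omega \nabla u \cdot \nabla v\, dx
  + \int_\Gamma \beta\, u|_\Gamma\, v|_\Gamma\, d\sigma
\end{equation*}
with $D(\aaa) := H^1(\Omega)\cap C(\bar\Omega)$. The space $D(\aaa)$ contains $C_c^\infty(\Omega)$ and is therefore dense in $L^2(\Omega)$; the form $\aaa$ is manifestly symmetric, and accretive since $\beta\geq 0$. Theorem~\ref{thm:abstract-generation} then produces a unique self-adjoint operator $A$ associated with $\aaa$. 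The plan is to identify $A=-\Delta_\beta$, from which monotonicity of $-\Delta_\beta$ follows by a limiting argument on the accretivity.

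For the inclusion $\graph(-\Delta_\beta)\subseteq\graph(A)$, take $u\in D(\Delta_\beta)$ with approximate trace $u_\Gamma\in\Tr(u)$ satisfying $\partial_\nu u+\beta u_\Gamma=0$. By the definition \eqref{eq:trace-set} of the approximate trace pick $u_n\in H^1(\Omega)\cap C(\bar\Omega)$ with $u_n\to u$ in $H^1(\Omega)$ and $u_n|_\Gamma\to u_\Gamma$ in $L^2(\Gamma)$. Then $\aaa(u_n-u_m)\to 0$, and for $v\in D(\aaa)$ Green's formula \eqref{eq:green-formula} gives
\begin{equation*}
  \aaa(u_n,v)\to\int_\Omega\nabla u\cdot\nabla v\,dx
  +\int_\Gamma\beta u_\Gamma v|_\Gamma\,d\sigma
  =\int_\Gamma(\partial_\nu u+\beta u_\Gamma)v|_\Gamma\,d\sigma
  -\int_\Omega(\Delta u)v\,dx
  =\langle -\Delta u,v\rangle,
\end{equation*}
so by Theorem~\ref{thm:abstract-generation} we obtain $u\in D(A)$ with $Au=-\Delta u$.

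The hardest step is the converse inclusion, which must recover both $H^1$-regularity of $u$ and the precise Robin boundary condition from the purely abstract description of $D(A)$. Let $u\in D(A)$ and choose $u_n\in D(\aaa)$ with $u_n\to u$ in $L^2(\Omega)$, $\aaa(u_n-u_m)\to 0$, and $\aaa(u_n,v)\to\langle Au,v\rangle$ for all $v\in D(\aaa)$. The uniform lower bound $\beta\geq\delta>0$ is crucial here: it forces $(u_n|_\Gamma)$ to be Cauchy in $L^2(\Gamma)$, while the gradient term of $\aaa$ makes $(\nabla u_n)$ Cauchy in $L^2(\Omega)^N$. Passing to the limit yields $u\in H^1(\Omega)$ and a boundary limit $b\in\Tr(u)$. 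Testing $\aaa(u_n,v)\to\langle Au,v\rangle$ first against $v\in C_c^\infty(\Omega)$ identifies $-\Delta u=Au$ in the distributional sense, so $\Delta u\in L^2(\Omega)$. Testing against arbitrary $v\in H^1(\Omega)\cap C(\bar\Omega)$ and subtracting this distributional identity yields
\begin{equation*}
  \int_\Omega(\Delta u)v\,dx+\int_\Omega\nabla u\cdot\nabla v\,dx
  =\int_\Gamma(-\beta b)v|_\Gamma\,d\sigma,
\end{equation*}
which by Definition~\ref{def:normal-derivative} means $\partial_\nu u=-\beta b$. Thus $u\in D(\Delta_\beta)$ with $\Delta_\beta u=-Au$, so $A=-\Delta_\beta$.

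For monotonicity, given $u\in D(A)$ and an approximating sequence $u_n$ as above, accretivity of $\aaa$ gives $\aaa(u_n)\geq 0$ and the Cauchy property $\aaa(u_n-u_m)\to 0$ together with the Cauchy--Schwarz inequality for $\aaa$ implies $\aaa(u_n)\to L\geq 0$. On the other hand, for fixed $m$, $\aaa(u_n,u_m)\to\langle Au,u_m\rangle$ as $n\to\infty$, and then $\langle Au,u_m\rangle\to\langle Au,u\rangle$ as $m\to\infty$; the estimate $|\aaa(u_n,u_n)-\aaa(u_n,u_m)|\leq\aaa(u_n)^{1/2}\aaa(u_n-u_m)^{1/2}$ shows these two limiting procedures agree, giving $\langle Au,u\rangle=L\geq 0$. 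This is precisely $\langle\Delta_\beta u,u\rangle\leq 0$.
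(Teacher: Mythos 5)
Your proof is correct and follows essentially the same route as the paper: the same symmetric accretive form on $H^1(\Omega)\cap C(\bar\Omega)$, an appeal to Theorem~\ref{thm:abstract-generation}, and the same two-inclusion identification $A=-\Delta_\beta$, using $\beta\geq\delta>0$ to get the Cauchy property of the traces and Green's formula (Definition~\ref{def:normal-derivative}) to recover the Robin boundary condition. The only difference is cosmetic: you verify the monotonicity $\langle Au,u\rangle\geq 0$ by an explicit limiting argument, whereas the paper reads it off directly from the cited generation theorem for accretive symmetric forms.
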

\begin{proof}
  Let $H=L^2(\Omega)$ and $D(\aaa)=H^1(\Omega)\cap C(\bar\Omega)$. Set
  \begin{equation}
    \label{eq:robin-form}
    \aaa(u,v)
    :=\int_\Omega \nabla u\cdot\nabla v\,dx+\int_\Gamma \beta uv\,d\sigma
  \end{equation}
  for all $u,v\in D(\aaa)$. Then $\aaa$ is bilinear, symmetric and accretive. Let $A\sim \aaa$. Then $A$ is self-adjoint and monotone. Thus $-A$ generates a contractive $C_0$-semigroup on $L^2(\Omega)$. We show that $A=-\Delta_\beta$.

  (a) We first prove that $A\subseteq-\Delta_\beta$.  Let $u\in D(A)$ with $Au=f$. Then there exist $u_n\in D(\aaa)$ such that $u_n\to u$ in $L^2(\Omega)$, $\aaa(u_n-u_m)\to 0$ as $n,m\to\infty$ and $\aaa(u_n,v)\to\int_\Omega fv\,dx$ as $n\to\infty$ for all $v\in D(A)$. It follows that $(u_n)$ is a Cauchy sequence in $H^1(\Omega)$ and $(u_n|_\Gamma)$ is a Cauchy sequence in $L^2(\Gamma)$, where the latter conclusion makes use of the fact that $\beta\geq \delta>0$. Thus $u\in H^1(\Omega)$ and $u_n\to u$ in $H^1(\Omega)$ as $n\to\infty$. Let
  \begin{equation*}
    u_\Gamma:=\lim_{n\to\infty}u_n|_\Gamma
  \end{equation*}
  in $L^2(\Gamma)$. Then $u_\Gamma\in\Tr(u)$ and for $v\in D(\aaa)$
  \begin{equation*}
    \begin{split}
      \int_\Omega fv\,dx&=\lim_{n\to\infty}
      \Bigl(\int_\Omega \nabla u_n\cdot\nabla v\,dx
      +\int_\Gamma \beta u_nv\,d\sigma\Bigr)\\
      &=\int_\Omega \nabla u\cdot\nabla v\,dx
      +\int_\Gamma \beta u_\Gamma v\,d\sigma.
    \end{split}
  \end{equation*}
  Choosing $v\in C_c^\infty(\Omega)$ we see in particular that $-\Delta_\beta=f$. Hence
  \begin{equation*}
    \int_\Omega \nabla u\cdot\nabla v\,dx+\int_\Omega(\Delta u)v\,dx
    =-\int_\Gamma \beta uv\,d\sigma.
  \end{equation*}
  for all $u\in D(\aaa)$. Thus $\partial_\nu u=-\beta u_\Gamma$ and thus $u\in D(\Delta_\beta)$ as claimed.

  (b) We next show that $-\Delta_\beta\subseteq A$. Let $u\in D(\Delta_\beta)$ and $-\Delta_\beta u=f$. Then $u\in H^1(\Omega)$, $\Delta u=f$ and there exists $u_\Gamma\in\Tr(u)$ such that $\partial_\nu u=-\beta u_\Gamma$. From the definition of $\Tr(u)$ we obtain $u_n\in D(\aaa)$ such that $u_n\to u$ in $H^1(\Omega)$ and $u_n|_{\Gamma}\to u$ in $L^2(\Gamma)$ as $n\to\infty$. Thus
  \begin{equation*}
    \aaa(u_n-u_m)
    =\int_\Omega |\nabla (u_n-u_m)|^2\,dx
    +\int_\Gamma \beta |u_n-u_m|^2\,d\sigma
    \to 0
  \end{equation*}
  as $n,m\to\infty$. Moreover, for $v\in D(\aaa)$
  \begin{multline*}
    \aaa(u_n,v)=\int_\Omega \nabla u_n\cdot\nabla v\,dx+\int_\Gamma \beta
    u_nv\,d\sigma\\
    \to \int_\Omega \nabla u\cdot\nabla v\,dx+\int_\Gamma \beta
    u_\Gamma v\,d\sigma
    =\int_\Omega fv\,dx
  \end{multline*}
  as $n\to\infty$ since $-\Delta u=f$ and $\partial_\nu u=\beta u_\Gamma$. Hence $u\in D(A)$.
\end{proof}

\begin{remark}
  The form $\aaa$ given by \eqref{eq:robin-form} is closable if and only if $\Tr(0)=\{0\}$, that is, if and only if each $u\in H^1(\Omega)$ has at most one approximate trace. A first example of non-uniqueness and hence non-closability of $\aaa$ was given in \cite[Examples~4.2 and~4.3]{arendt:03:lrb}, where $\Omega$ is an open and bounded subset of $\mathbb R^2$ which is not connected, and of an open, bounded and connected set in $\mathbb R^3$. Another example is given in \cite[Example~4.4]{arendt:11:dno}. Recently, a systematic study of the approximate trace appeared in \cite{sauter:20:uat}. There are some remarkable positive results; see \cite[Corollary~4.10 and~Theorem~5.3]{sauter:20:uat}:
  \begin{enumerate}[(a)]
  \item If $\Omega\subseteq\mathbb R^2$ is open, bounded and connected, then the approximate trace is unique.
  \item If $\Omega\subseteq\mathbb R^N$ ($N\geq 2$) is open, bounded and has a continuous boundary in the sense of graphs, then the approximate trace is unique. Having continuous boundary is equivalent to the \emph{segment condition} as shown in \cite[Theorem~5.4.4]{edmunds:18:std}.
  \end{enumerate}
\end{remark}

\begin{corollary}
  \label{cor:robin-laplacian}
  Let $u,f\in L^2(\Omega)$. Then the following assertions are equivalent.
  \begin{enumerate}[\upshape (i)]
  \item $u\in D(\Delta_\beta)$ and $-\Delta u=f$
  \item $u\in H^1(\Omega)$ and there exists a unique $u_\Gamma\in\Tr(u)$ such that
    \begin{equation}
      \label{eq:weak-robin}
      \int_\Omega \nabla u\cdot\nabla v\,dx+\int_\Gamma \beta
      u_\Gamma v\,d\sigma
      =\int_\Omega fv\,dx
    \end{equation}
    for all $v\in C(\bar\Omega)\cap H^1(\Omega)$.
  \item $u\in H^1(\Omega)$ and there exist $u_n\in H^1(\Omega)\cap C(\bar\Omega)$ such that $u_n\to u$ in $H^1(\Omega)$, $\lim_{n\to\infty}u_n|_{\Gamma}$ exist in $L^2(\Gamma)$ and $\aaa(u_n,v)\to\int_\Omega fv\,dx$ for all $v\in D(\aaa)$ as $n\to\infty$, where $\aaa$ is given by \eqref{eq:robin-form}.
  \end{enumerate}
  In that case $u_\Gamma=\lim_{n\to\infty}u_n$ in $L^2(\Gamma)$ for every sequence $(u_n)$ from (iii) and for each $u_\Gamma$ given by (ii).
\end{corollary}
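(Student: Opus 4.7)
The plan is to establish the three-way equivalence by showing (i)$\Leftrightarrow$(ii) directly from Definition~\ref{def:normal-derivative} and the construction of $\Delta_\beta$, and then (i)$\Leftrightarrow$(iii) by unpacking the abstract description of $A\sim\aaa$ from Theorem~\ref{thm:abstract-generation} together with the identification $A=-\Delta_\beta$ from Proposition~\ref{prop:robin-laplacian}. Throughout, the coercivity condition $\beta\geq\delta>0$ will be the technical glue that lets one pass between form-theoretic and pointwise formulations.

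For (i)$\Rightarrow$(ii), I would take $u\in D(\Delta_\beta)$ with $-\Delta u=f$. By the definition \eqref{eq:robin-laplacian} there is some $u_\Gamma\in\Tr(u)$ with $\partial_\nu u=-\beta u_\Gamma$. Substituting this into Green's formula \eqref{eq:green-formula} immediately gives \eqref{eq:weak-robin}. Uniqueness of $u_\Gamma$ follows because any two choices would differ by an $L^2(\Gamma)$-function orthogonal to every $v\in H^1(\Omega)\cap C(\bar\Omega)$ against the positive weight $\beta$, which (since such $v|_\Gamma$ is dense enough and $\beta\geq\delta>0$) forces equality. For (ii)$\Rightarrow$(i), I would first test \eqref{eq:weak-robin} against $v\in C_c^\infty(\Omega)$ to conclude $-\Delta u=f$ distributionally, hence $\Delta u\in L^2(\Omega)$. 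Subtracting this from \eqref{eq:weak-robin} produces exactly Green's identity \eqref{eq:green-formula} with $b=-\beta u_\Gamma$, so by Definition~\ref{def:normal-derivative} we have $\partial_\nu u=-\beta u_\Gamma$, i.e.\ $u\in D(\Delta_\beta)$.

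For (i)$\Leftrightarrow$(iii), I invoke Proposition~\ref{prop:robin-laplacian} which tells us $-\Delta_\beta=A\sim\aaa$. By Theorem~\ref{thm:abstract-generation}, $u\in D(A)$ with $Au=f$ iff there exist $u_n\in D(\aaa)=H^1(\Omega)\cap C(\bar\Omega)$ with $u_n\to u$ in $L^2(\Omega)$, $\aaa(u_n-u_m)\to 0$, and $\aaa(u_n,v)\to\int_\Omega fv\,dx$ for every $v\in D(\aaa)$. The key step is to observe that
\begin{equation*}
  \aaa(u_n-u_m)=\int_\Omega|\nabla(u_n-u_m)|^2\,dx+\int_\Gamma\beta|u_n-u_m|^2\,d\sigma
\end{equation*}
so $\aaa(u_n-u_m)\to 0$, combined with $u_n\to u$ in $L^2(\Omega)$ and the lower bound $\beta\geq\delta>0$, is equivalent to $(u_n)$ being Cauchy in $H^1(\Omega)$ and $(u_n|_\Gamma)$ being Cauchy in $L^2(\Gamma)$. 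This yields $u_n\to u$ in $H^1(\Omega)$ and the existence of $\lim u_n|_\Gamma$ in $L^2(\Gamma)$, giving (iii). The reverse implication is immediate since the same expansion shows $\aaa(u_n-u_m)\to 0$ from the $H^1$-Cauchyness and $L^2(\Gamma)$-convergence assumed in~(iii).

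The final assertion identifying $u_\Gamma=\lim u_n|_\Gamma$ follows from the definition \eqref{eq:trace-set} of $\Tr(u)$: any sequence $(u_n)$ from (iii) exhibits $\lim u_n|_\Gamma$ as an element of $\Tr(u)$, and by the uniqueness established in the proof of (i)$\Leftrightarrow$(ii) this limit must coincide with the unique $u_\Gamma$ appearing in~(ii). The main obstacle I anticipate is a clean justification that $\aaa(u_n-u_m)\to 0$ combined with $u_n\to u$ in $L^2(\Omega)$ (rather than in $H^1$) already forces convergence of $(u_n)$ in $H^1(\Omega)$; this uses that the symmetric bilinear form $(u,v)\mapsto\int_\Omega\nabla u\cdot\nabla v\,dx+\int_\Gamma\beta uv\,d\sigma+\int_\Omega uv\,dx$ dominates the $H^1$-inner product, which hinges crucially on $\beta\geq\delta>0$.
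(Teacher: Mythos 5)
Your overall route is essentially the paper's: the equivalence (i)$\iff$(ii) is read off from Definition~\ref{def:normal-derivative} and the definition of $D(\Delta_\beta)$, and your passage between (i) and (iii) via Theorem~\ref{thm:abstract-generation} and the identification $A=-\Delta_\beta$ is exactly the computation carried out in parts (a) and (b) of the proof of Proposition~\ref{prop:robin-laplacian}, which is what the paper's proof cites; in particular your key observation that $\aaa(u_n-u_m)\to 0$ together with $u_n\to u$ in $L^2(\Omega)$ and $\beta\geq\delta>0$ upgrades to Cauchyness in $H^1(\Omega)$ and of the traces in $L^2(\Gamma)$ is precisely the argument used there.

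The one place where your argument, as written, does not close is the final identification $u_\Gamma=\lim_{n\to\infty}u_n|_\Gamma$. You argue that the limit trace $b:=\lim u_n|_\Gamma$ lies in $\Tr(u)$ and then invoke the uniqueness from (ii). But the uniqueness in (ii) is uniqueness among those elements of $\Tr(u)$ that satisfy \eqref{eq:weak-robin}, and $\Tr(u)$ may be an infinite set (this non-uniqueness of approximate traces is the whole point of the construction), so membership in $\Tr(u)$ plus uniqueness does not by itself identify $b$ with $u_\Gamma$. You must additionally verify that $b$ satisfies \eqref{eq:weak-robin}; this is immediate by passing to the limit in $\aaa(u_n,v)=\int_\Omega\nabla u_n\cdot\nabla v\,dx+\int_\Gamma\beta u_n v\,d\sigma$, using $u_n\to u$ in $H^1(\Omega)$, $u_n|_\Gamma\to b$ in $L^2(\Gamma)$ and the hypothesis in (iii) that $\aaa(u_n,v)\to\int_\Omega fv\,dx$, which is exactly the limit computation in part (a) of the proof of Proposition~\ref{prop:robin-laplacian}. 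With that one line added, your proof is complete. (Your uniqueness argument for $u_\Gamma$ in (ii) is fine: it rests on the same fact the paper uses for uniqueness of the normal derivative, namely that a function in $L^2(\Gamma)$ integrating to zero against all $v\in H^1(\Omega)\cap C(\bar\Omega)$ vanishes, together with $\beta\geq\delta>0$.)
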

\begin{proof}
  (i) $\iff$ (ii): This follows from the definition of $D(\Delta_\beta)$.

  (iii) $\implies$ (ii):  This is part (a) of the proof of Proposition~\ref{prop:robin-laplacian}.

  (ii) $\implies$ (iii): This follows from part (b) in the proof of Proposition~\ref{prop:robin-laplacian}.
\end{proof}

In a series of propositions we now prove the properties of the $C_0$-semigroup $(T_\beta(t))_{t\geq 0}$ generated by $\Delta_\beta$ as stated in Theorem~\ref{thm:robin-laplacian}.

\begin{proposition}[Positivity]
  \label{prop:robin-sg-positive}
  We have $T_\beta(t)\geq 0$ for all $t\geq 0$.
\end{proposition}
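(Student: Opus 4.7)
The plan is to verify positivity at the level of the resolvent and deduce positivity of $(T_\beta(t))_{t\geq 0}$ from the exponential formula $T_\beta(t)=\lim_{n\to\infty}(I-(t/n)\Delta_\beta)^{-n}$. Concretely, fixing $\lambda>0$ and $0\leq f\in L^2(\Omega)$, and setting $u:=(\lambda I-\Delta_\beta)^{-1}f$ (which exists since $-\Delta_\beta$ is self-adjoint and accretive by Proposition~\ref{prop:robin-laplacian}), I would show that $u^-=0$.

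By Corollary~\ref{cor:robin-laplacian} there is $u_\Gamma\in\Tr(u)$ with
\begin{equation*}
\lambda\int_\Omega uv\,dx+\int_\Omega\nabla u\cdot\nabla v\,dx+\int_\Gamma\beta u_\Gamma v\,d\sigma=\int_\Omega fv\,dx
\end{equation*}
for every $v\in H^1(\Omega)\cap C(\bar\Omega)$. The natural test function is $u^-$, but $u^-$ need not be continuous on $\bar\Omega$. I would therefore pick a sequence $u_n\in H^1(\Omega)\cap C(\bar\Omega)$ with $u_n\to u$ in $H^1(\Omega)$ and $u_n|_\Gamma\to u_\Gamma$ in $L^2(\Gamma)$, as supplied by the definition of $\Tr(u)$, and test with $v=u_n^-\in H^1(\Omega)\cap C(\bar\Omega)$. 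Passing to the limit by continuity of $w\mapsto w^-$ on $H^1(\Omega)$ and on $L^2(\Gamma)$, together with the identities $uu^-=-(u^-)^2$, $\nabla u\cdot\nabla u^-=-|\nabla u^-|^2$ almost everywhere, and $u_\Gamma u_\Gamma^-=-(u_\Gamma^-)^2$, turns the weak equation into
\begin{equation*}
-\lambda\|u^-\|_{L^2(\Omega)}^2-\|\nabla u^-\|_{L^2(\Omega)}^2-\int_\Gamma\beta(u_\Gamma^-)^2\,d\sigma=\int_\Omega fu^-\,dx.
\end{equation*}
The right side is non-negative since $f,u^-\geq 0$, while every term on the left is non-positive since $\lambda>0$ and $\beta\geq\delta>0$; hence both sides vanish and, in particular, $u^-=0$.

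The main subtlety is the limit in the boundary term, which relies on the identity $u_n^-|_\Gamma=(u_n|_\Gamma)^-$ (valid because each $u_n$ is continuous on $\bar\Omega$) combined with continuity of the lattice operation on $L^2(\Gamma)$. This is precisely the point where the description of $u_\Gamma$ via approximating sequences, rather than via a trace operator, fits the argument seamlessly, and is why no regularity of $\partial\Omega$ beyond $\mathcal H^{N-1}(\Gamma)<\infty$ enters the proof.
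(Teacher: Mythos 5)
Your proposal is correct and follows essentially the same route as the paper: the paper also reduces positivity of $T_\beta(t)$ to positivity of $(\lambda I-\Delta_\beta)^{-1}$, invokes Corollary~\ref{cor:robin-laplacian} for the weak formulation, and tests against the lattice part of an approximating sequence $u_n\in H^1(\Omega)\cap C(\bar\Omega)$ from $\Tr(u)$ before passing to the limit (the paper works with $f\leq 0$ and $u_n^+$ rather than $f\geq 0$ and $u_n^-$, which is only a sign convention). The handling of the boundary term via $u_n^\pm|_\Gamma=(u_n|_\Gamma)^\pm$ and continuity of the lattice operations is exactly the paper's device as well.
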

\begin{proof}
  The statement is equivalent to $(\lambda I-\Delta_\beta)^{-1}\geq 0$ for all $\lambda>0$. Let $\lambda>0$, $f\in L^2(\Omega)$ and set $u:=(\lambda I-\Delta_\beta)^{-1}f$. Assuming that $f\leq 0$ we have to show that $u\leq 0$. Since $u\in D(\Delta_\beta)$ and $\lambda   u-\Delta_\beta u=f$, by Corollary~\ref{cor:robin-laplacian} there exists a unique $u_\Gamma\in L^2(\Gamma)$ such that
  \begin{equation}
    \label{eq:robin-weak-resolvent}
    \lambda\int_\Omega uv\,dx+\int_\Omega\nabla u\cdot\nabla
    v\,dx+\int_\Gamma\beta u_\Gamma v\,d\sigma
    =\int_\Omega fv\,dx
  \end{equation}
  for all $v\in H^1(\Omega)\cap C(\bar\Omega)$. Since $u_\Gamma\in\Tr(u)$ there exist $u_n\in H^1(\Omega)\cap C(\bar\Omega)$ such that $u_n\to u$ in $H^1(\Omega)$ and $u_n|_\Gamma\to u_\Gamma$ in $L^2(\Gamma)$. Then also  $u_n^+\in H^1(\Omega)\cap C(\bar\Omega)$, $u_n^+\to u^+$ in $H^1(\Omega)$ and $u_n^+|_\Gamma\to u_\Gamma^+$ in $L^2(\Gamma)$. Letting $v=u_n^+$ in \eqref{eq:robin-weak-resolvent} gives
  \begin{equation*}
    \lambda\int_\Omega uu_n^+\,dx+\int_\Omega\nabla u\cdot\nabla u_n^+\,dx
    +\int_\Gamma\beta uu_n^+\,d\sigma
    \leq 0
  \end{equation*}
  for all $n\in\mathbb N$. Sending $n\to\infty$ and using that $\nabla   u_n^+=1_{\{u_n>0\}}\nabla u_n$ we obtain
  \begin{equation*}
    \lambda\int_\Omega |u^+|^2\,dx+\int_\Omega|\nabla u^+|^2\,dx
    +\int_\Gamma\beta |u^+|^2\,d\sigma
    \leq 0.
  \end{equation*}
  As $\lambda,\beta>0$ this implies that $u^+=0$, that is, $u\leq 0$.
\end{proof}

Some properties of the Robin Laplacian can be obtained from the Dirichlet Laplacian $\Delta_D$ given in Subsection~\ref{sec:dirichlet-laplacian}. Denote the $C_0$-semigroup generated by $\Delta_D$ on $L^2(\Omega)$ by $(T_D(t))_{t\geq 0}$.

\begin{proposition}[Domination]
  \label{prop:robin-domination}
  One has $0\leq T_D(t)\leq T_\beta(t)$ for all $t>0$.
\end{proposition}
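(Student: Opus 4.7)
The plan is to reduce the domination of the semigroups to the domination of their resolvents. Since both $(T_D(t))_{t \geq 0}$ and $(T_\beta(t))_{t \geq 0}$ are positive $C_0$-semigroups (by Theorem~\ref{thm:dirichlet-laplacian} and Proposition~\ref{prop:robin-sg-positive}, respectively), the Laplace transform representation of the resolvent shows that $0 \leq T_D(t) \leq T_\beta(t)$ for all $t \geq 0$ if and only if $0 \leq (\lambda I - \Delta_D)^{-1} \leq (\lambda I - \Delta_\beta)^{-1}$ for all $\lambda > 0$.

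Fix $\lambda > 0$ and $0 \leq f \in L^2(\Omega)$. Set $u := (\lambda I - \Delta_D)^{-1} f$ and $v := (\lambda I - \Delta_\beta)^{-1} f$. Both are nonnegative by positivity of the respective semigroups. The Dirichlet solution $u \in H_0^1(\Omega)$ satisfies
\begin{equation*}
  \lambda \int_\Omega u \varphi \, dx + \int_\Omega \nabla u \cdot \nabla \varphi \, dx = \int_\Omega f \varphi \, dx
\end{equation*}
for all $\varphi \in H_0^1(\Omega)$. For the Robin solution $v \in H^1(\Omega)$, Corollary~\ref{cor:robin-laplacian} gives an analogous identity for $\varphi \in H^1(\Omega) \cap C(\bar\Omega)$ with an additional boundary term $\int_\Gamma \beta v_\Gamma \varphi \, d\sigma$. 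Restricting to $\varphi \in C_c^\infty(\Omega)$ the boundary term vanishes, and since $-\Delta v = f - \lambda v \in L^2(\Omega)$ in $\mathcal{D}'(\Omega)$, a density argument extends the identity to all $\varphi \in H_0^1(\Omega)$:
\begin{equation*}
  \lambda \int_\Omega v \varphi \, dx + \int_\Omega \nabla v \cdot \nabla \varphi \, dx = \int_\Omega f \varphi \, dx.
\end{equation*}

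Subtracting and setting $w := u - v \in H^1(\Omega)$ yields
\begin{equation*}
  \lambda \int_\Omega w \varphi \, dx + \int_\Omega \nabla w \cdot \nabla \varphi \, dx = 0
  \qquad \text{for all } \varphi \in H_0^1(\Omega).
\end{equation*}
The aim is to use $\varphi := w^+$ as a test function, which requires $w^+ \in H_0^1(\Omega)$. Here the key point is that $v \geq 0$ forces $0 \leq w^+ \leq u$ pointwise a.e., and since $u \in H_0^1(\Omega)$, the \emph{ideal property} of $H_0^1(\Omega)$ inside $H^1(\Omega)$ delivers $w^+ \in H_0^1(\Omega)$. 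With this test function the standard chain rules $w \cdot w^+ = (w^+)^2$ and $\nabla w \cdot \nabla w^+ = |\nabla w^+|^2$ give
\begin{equation*}
  \lambda \|w^+\|_{L^2(\Omega)}^2 + \|\nabla w^+\|_{L^2(\Omega)}^2 = 0,
\end{equation*}
hence $w^+ = 0$, i.e.\ $u \leq v$ as required.

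The main technical obstacle is the ideal property used above: if $\phi \in H^1(\Omega)$ satisfies $0 \leq \phi \leq \psi$ for some $\psi \in H_0^1(\Omega)$, then $\phi \in H_0^1(\Omega)$. This is a classical but nontrivial lattice fact (one route is to approximate $\psi$ by $\psi_n \in C_c^\infty(\Omega)$ and pass to the truncations $\phi \wedge \psi_n$ which have compact support in $\Omega$, and take a limit in $H^1$). No regularity of $\Gamma$ is needed for this argument, which fits with the philosophy of the paper.
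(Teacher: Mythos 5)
Your proof is correct, and while the overall skeleton coincides with the paper's (reduce to domination of resolvents, then test the difference of the two weak formulations with the positive part of $u_D-u_\beta$ and conclude by an energy identity), the technical implementation is genuinely different. The paper stays entirely inside the approximate-trace framework of Corollary~\ref{cor:robin-laplacian}: it keeps the boundary term $\int_\Gamma\beta u_\Gamma v\,d\sigma$ in the subtracted identity \eqref{eq:resolvent-difference} and builds admissible test functions $v_n=(w_n-u_n)^+\in H^1(\Omega)\cap C_0(\Omega)$ out of the approximating sequences for the Dirichlet solution and for the trace, so that $v_n|_\Gamma=0$ kills the boundary term before passing to the limit. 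You instead discard the boundary behaviour of the Robin solution altogether: you use only that $v\geq 0$ (Proposition~\ref{prop:robin-sg-positive}) and that $\lambda v-\Delta v=f$ with $\Delta v\in L^2(\Omega)$ in the distributional sense, extend the interior identity from $C_c^\infty(\Omega)$ to $H_0^1(\Omega)$ by density, and then legitimise $w^+=(u-v)^+$ as a test function via the order-ideal property of $H_0^1(\Omega)$ in $H^1(\Omega)$ (if $\phi\in H^1(\Omega)$, $\psi\in H_0^1(\Omega)$ and $0\leq\phi\leq\psi$, then $\phi\in H_0^1(\Omega)$). That lemma is classical and valid on arbitrary open sets, and your truncation sketch of it is sound, resting on the $H^1$-continuity of the lattice operations, a fact the paper itself uses (for instance in the proof of Proposition~\ref{prop:robin-sg-positive}); so there is no gap. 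What each approach buys: the paper's argument avoids invoking the ideal lemma and uses nothing beyond the machinery already set up for the non-closable Robin form, whereas yours is shorter at the level of the Robin problem, makes no use of approximate traces at all, and in fact proves the stronger statement that $(T_D(t))_{t\geq 0}$ is dominated by any positive semigroup whose generator acts as a restriction of the distributional Laplacian --- at the price of importing the (classical but nontrivial) ideal property of $H_0^1(\Omega)$.
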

\begin{proof}
  We know already that $T_D(t)\geq 0$. In order to show that $T_D(t)\leq T_\beta(t)$ for all $t>0$ it suffices to show that
  \begin{equation*}
    (\lambda I-\Delta_D)^{-1}\leq (\lambda I-\Delta_\beta)^{-1}
  \end{equation*}
  whenever $\lambda>0$. Let $0\leq f\in L^2(\Omega)$ and set $u:=(\lambda I-\Delta_\beta)^{-1}f$ and $w:=(\lambda I-\Delta_D)^{-1}f$. We know from Subsection~\ref{sec:dirichlet-laplacian} that $w\geq 0$. We also know from Proposition~\ref{prop:robin-sg-positive} that $u\geq 0$. We have to show that $w\leq u$. By definition $w\in H_0^1(\Omega)$ and
  \begin{equation}
    \label{eq:dirichlet-weak-resolvent}
    \lambda\int_\Omega wv\,dx+\int_\Omega\nabla w\cdot\nabla v
    =\int_\Omega fv\,dx
  \end{equation}
  for all $v\in H_0^1(\Omega)$. Moreover, $u\in H^1(\Omega)$ and there exists a unique $u_\Gamma\in\Tr(u)$ such that \eqref{eq:robin-weak-resolvent} holds for all $v\in H^1(\Omega)\cap C(\bar\Omega)$. Subtracting \eqref{eq:dirichlet-weak-resolvent} from \eqref{eq:robin-weak-resolvent} we obtain
  \begin{equation}
    \label{eq:resolvent-difference}
    \lambda\int_\Omega (w-u)v\,dx+\int_\Omega\nabla (w-u)\cdot\nabla v\,dx
    -\int_\Gamma\beta u_\Gamma v\,d\sigma
    =0
  \end{equation}
  for all $v\in H_0^1(\Omega)\cap C(\bar\Omega)$. Since $w\in H_0^1(\Omega)$ there exist $w_n\in C_c^\infty(\Omega)$ such that $w_n\to w$ in $H_0^1(\Omega)$. Then $w_n^+\in H^1(\Omega)\cap C_0(\Omega)$ and $w_n^+\to w^+$ in $H^1(\Omega)$ as $n\to\infty$. As $w\geq 0$ we have $w=w^+$. Since $u_\Gamma\in\Tr(u)$ there exist $u_n\in H^1(\Omega)\cap C(\bar\Omega)$ such that $u_n\to u$ in $H^1(\Omega)$ and $u_n|_\Gamma\to u_\Gamma$ in $L^2(\Gamma)$ as $n\to\infty$. Since $u\geq 0$, replacing $u_n$ by $u_n^+$ we may assume without loss of generality that $u_n\geq 0$. Then $v_n:=(w_n-u_n)^+\in H^1(\Omega)\cap C_0(\Omega)$ and $v_n\to (w-u)^+$ in $H^1(\Omega)$. Putting $v=v_n$ in \eqref{eq:resolvent-difference} and sending $n\to\infty$ yields
  \begin{equation}
    \label{eq:resolvent-difference-limit}
    \lambda\int_\Omega |(w-u)^+|^2\,dx+\int_\Omega|\nabla (w-u)^+|^2\,dx
    =0,
  \end{equation}
  where we used that $v_n|_\Gamma=0$, $(w-u)(w-u)^+=|(w-u)^+|^2$ and $\nabla(w-u)^+=1_{\{w>u\}}\nabla(w-u)$ so that $\nabla(w-u)\cdot\nabla(w-u)^+=|\nabla(w-u)^+|^2$. Hence \eqref{eq:resolvent-difference-limit} shows that $(w-u)^+=0$, that is, $w\leq u$ as claimed.
\end{proof}

\begin{proposition}
  \label{prop:robin-locally-smoothing}
  The semigroup $(T_\beta(t))_{t\geq 0}$ is locally smoothing.
\end{proposition}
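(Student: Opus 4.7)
The plan is to verify the two parts of Definition~\ref{def:locally-smoothing}, namely interior continuity $T_\beta(t)L^2(\Omega)\subseteq C(\Omega)$ for all $t>0$, and the non-vanishing condition \eqref{eq:zero-regular} at every point $x\in\Omega$.

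For the continuity part, I would argue exactly as in the Dirichlet case in the proof of Theorem~\ref{thm:dirichlet-laplacian}. Since $\Delta_\beta$ is self-adjoint by Proposition~\ref{prop:robin-laplacian}, the semigroup $(T_\beta(t))_{t\geq 0}$ is holomorphic, and therefore $T_\beta(t)L^2(\Omega)\subseteq D(\Delta_\beta^k)$ for every $k\in\mathbb N$ and every $t>0$. If $u\in D(\Delta_\beta^k)$, then in the distributional sense $\Delta^j u=\Delta_\beta^j u\in L^2(\Omega)$ for $j=0,1,\dots,k$, so interior elliptic regularity (bootstrapping, exactly as cited for $\Delta_D$) gives $u\in H^{2k}_{\loc}(\Omega)$. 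Choosing $k>N/4$ and applying the Sobolev embedding yields $u\in C(\Omega)$. Crucially, this step is purely interior and makes no use of any boundary regularity or of the Robin condition, so it goes through with no modification.

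For the non-vanishing condition, the natural strategy is to reduce to the Dirichlet case via domination. By Proposition~\ref{prop:robin-domination} we have $0\leq T_D(t)\leq T_\beta(t)$ for every $t>0$. By Theorem~\ref{thm:dirichlet-laplacian} the Dirichlet semigroup is locally smoothing, so for each given $x\in\Omega$ we can pick $0<w\in L^2(\Omega)$ and $s>0$ with $[T_D(s)w](x)>0$. Both $T_D(s)w$ and $T_\beta(s)w$ are continuous on $\Omega$ by the previous paragraph, so the a.e.\ inequality $T_D(s)w\leq T_\beta(s)w$ holds pointwise on $\Omega$, giving
\begin{equation*}
  [T_\beta(s)w](x)\geq [T_D(s)w](x)>0,
\end{equation*}
which is \eqref{eq:zero-regular}.

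The only potential obstacle I see is that the interior regularity argument needs the identification of $\Delta_\beta^j u$ with the distributional Laplacian iterated, but this is immediate from the definition of $\Delta_\beta$ in \eqref{eq:robin-laplacian}, where the action of $\Delta_\beta$ is literally $\Delta$ in the distributional sense. Consequently neither step depends on the boundary behaviour of $\Omega$ beyond the hypothesis $\mathcal H^{N-1}(\Gamma)<\infty$ that is already built into the construction of $\Delta_\beta$, and the result follows.
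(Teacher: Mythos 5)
Your proof is correct and follows essentially the same route as the paper: interior continuity via holomorphy, the inclusion $T_\beta(t)L^2(\Omega)\subseteq D(\Delta_\beta^k)$ and the interior elliptic regularity plus Sobolev embedding argument already used for the Dirichlet Laplacian, and the non-vanishing condition \eqref{eq:zero-regular} by domination $0\leq T_D(t)\leq T_\beta(t)$ from Proposition~\ref{prop:robin-domination}. Your extra remarks (taking $w>0$, and upgrading the a.e.\ inequality to a pointwise one via continuity) are fine and consistent with the paper's argument.
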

\begin{proof}
  (a) We know already that $(T_\beta(t))_{t\geq 0}$ is positive and holomorphic. Hence $T_\beta(t)L^2(\Omega)\subseteq D((\Delta_\beta)^k)$ for all $k\in\mathbb N$. For $u\in D((\Delta_\beta)^k)$ we have $\Delta^k u\in L^2(\Omega)$ for all $k\in\mathbb N$. Now apply the same arguments as in the proof of Theorem~\ref{thm:dirichlet-laplacian} to see that $T(t)L^2(\Omega)\subseteq C(\Omega)$ for all $t>0$.

  (b) We know from Subsection~\ref{sec:dirichlet-laplacian} that $(T_D(t))_{t\geq 0}$ satisfies \eqref{eq:zero-regular}. Thus, given $x\in \Omega$ there exists $w\in L^2(\Omega)$ and $t>0$ such that $(T_D(t)w)(x)\neq 0$. By Proposition~\ref{prop:robin-domination}
  \begin{equation*}
    (T_\beta(t)|w|)(x)
    \geq(T_\beta(t)|w|)(x)
    \geq\bigl|(T_D(t)w)(x)\bigr|
    >0
  \end{equation*}
  and thus also $(T_\beta(t))_{t\geq 0}$ satisfies \eqref{eq:zero-regular}.
\end{proof}

We next show that the semigroup $(T_\beta(t))_{t\geq 0}$ is \emph{submarkovian}, that is, $T_\beta(t)1_\Omega\leq 1_\Omega$ for all $t\geq 0$.

\begin{proposition}
  \label{prop:robin-submarkovian}
  The semigroup $(T_\beta(t))_{t\geq 0}$ is submarkovian.
\end{proposition}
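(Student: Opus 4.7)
\medskip

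The plan is to use the resolvent characterisation: since $(T_\beta(t))_{t\geq 0}$ is a positive contraction semigroup, $T_\beta(t)\mathbf 1_\Omega\leq \mathbf 1_\Omega$ for all $t\geq 0$ if and only if $(\lambda I-\Delta_\beta)^{-1}(\lambda\mathbf 1_\Omega)\leq \mathbf 1_\Omega$ for every $\lambda>0$. So I fix $\lambda>0$, set $u:=(\lambda I-\Delta_\beta)^{-1}(\lambda\mathbf 1_\Omega)$, and aim to show $u\leq\mathbf 1_\Omega$. By Proposition~\ref{prop:robin-sg-positive} we already know $u\geq 0$, so it suffices to show $(u-1)^+=0$.

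By Corollary~\ref{cor:robin-laplacian} there exists a unique $u_\Gamma\in\Tr(u)$ such that
\begin{equation*}
  \lambda\int_\Omega uv\,dx+\int_\Omega\nabla u\cdot\nabla v\,dx
  +\int_\Gamma \beta u_\Gamma v\,d\sigma
  =\lambda\int_\Omega v\,dx
\end{equation*}
for all $v\in H^1(\Omega)\cap C(\bar\Omega)$. By the definition of $\Tr(u)$, choose $u_n\in H^1(\Omega)\cap C(\bar\Omega)$ with $u_n\to u$ in $H^1(\Omega)$ and $u_n|_\Gamma\to u_\Gamma$ in $L^2(\Gamma)$. The test functions I will use are $v_n:=(u_n-1)^+\in H^1(\Omega)\cap C(\bar\Omega)$. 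Substituting $v=v_n$ and rewriting $\lambda\int_\Omega uv_n\,dx-\lambda\int_\Omega v_n\,dx=\lambda\int_\Omega(u-1)v_n\,dx$ yields
\begin{equation*}
  \lambda\int_\Omega(u-1)v_n\,dx+\int_\Omega\nabla u\cdot\nabla v_n\,dx
  +\int_\Gamma \beta u_\Gamma v_n\,d\sigma=0.
\end{equation*}

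Next I pass to the limit $n\to\infty$. Since $u_n\to u$ in $H^1(\Omega)$, standard truncation estimates give $v_n\to (u-1)^+$ in $H^1(\Omega)$, so the first two integrals converge to $\lambda\int_\Omega((u-1)^+)^2\,dx$ and $\int_\Omega|\nabla(u-1)^+|^2\,dx$ respectively (using $\nabla(u-1)^+=\mathbf 1_{\{u>1\}}\nabla u$). For the boundary term I use $u_n|_\Gamma\to u_\Gamma$ in $L^2(\Gamma)$, which implies $v_n|_\Gamma=(u_n|_\Gamma-1)^+\to (u_\Gamma-1)^+$ in $L^2(\Gamma)$; combined with the algebraic identity $u_\Gamma(u_\Gamma-1)^+=((u_\Gamma-1)^+)^2+(u_\Gamma-1)^+\geq 0$ (valid because $u_\Gamma\geq 0$ wherever $u_\Gamma>1$), the third integral tends to a non-negative quantity. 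The limit identity thus reads
\begin{equation*}
  \lambda\int_\Omega((u-1)^+)^2\,dx+\int_\Omega|\nabla(u-1)^+|^2\,dx
  +\int_\Gamma\beta\bigl[((u_\Gamma-1)^+)^2+(u_\Gamma-1)^+\bigr]d\sigma=0,
\end{equation*}
and since each summand on the left is non-negative and $\lambda>0$, we conclude $(u-1)^+=0$.

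The main technical point is justifying the convergence of the boundary term: although an arbitrary $w\in H^1(\Omega)$ need not possess a unique approximate trace, the sequence $(u_n)$ was specifically chosen so that $u_n|_\Gamma\to u_\Gamma$ in $L^2(\Gamma)$, and the pointwise Lipschitz map $s\mapsto (s-1)^+$ is a contraction on $L^2(\Gamma)$, so convergence of $v_n|_\Gamma\to (u_\Gamma-1)^+$ is automatic. The lower bound $\beta\geq\delta>0$ is not used in this particular step, but it is what allowed the $L^2(\Gamma)$-control of traces in Corollary~\ref{cor:robin-laplacian} that we are invoking. Non-negativity of $u_\Gamma$ on $\{u_\Gamma>1\}$ is trivial, so no further positivity analysis of $u_\Gamma$ is needed.
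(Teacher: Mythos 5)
Your proposal is correct and follows the paper's strategy in all essentials: reduce the statement to the resolvent estimate $\lambda(\lambda I-\Delta_\beta)^{-1}\mathbf{1}_\Omega\leq\mathbf{1}_\Omega$ (the paper makes this reduction explicit via the Euler formula $T_\beta(t)=\lim_{n\to\infty}\bigl(\tfrac{n}{t}(\tfrac{n}{t}I-\Delta_\beta)^{-1}\bigr)^n$ together with positivity of the resolvent, which is worth citing rather than just asserting the equivalence), then test the weak Robin identity from Corollary~\ref{cor:robin-laplacian} with $v_n=(u_n-1)^+$ for an approximating sequence realising $u_\Gamma$, and pass to the limit. The one genuine difference is the treatment of the boundary term. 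The paper first argues that $u_\Gamma\geq 0$ (via a non-negative approximating sequence, using $u\geq 0$ from Proposition~\ref{prop:robin-sg-positive}) and then discards $\int_\Gamma\beta u_\Gamma v_n\,d\sigma\geq 0$ before letting $n\to\infty$; since the approximate trace is not unique, making that positivity of the \emph{specific} $u_\Gamma$ from the Robin condition precise takes a little care (replacing $u_n$ by $u_n^+$ a priori only shows $u_\Gamma^+\in\Tr(u)$). You instead keep the boundary term, use that $s\mapsto(s-1)^+$ is a contraction to get $v_n|_\Gamma\to(u_\Gamma-1)^+$ in $L^2(\Gamma)$, and invoke the purely algebraic identity $u_\Gamma(u_\Gamma-1)^+=\bigl((u_\Gamma-1)^+\bigr)^2+(u_\Gamma-1)^+\geq 0$, so no positivity information on $u_\Gamma$ is needed at all, only $\beta\geq 0$. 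That is a clean simplification; what the paper's variant buys in exchange is that it never has to identify the limit of the boundary integral. Both arguments also share the standard fact that truncation is continuous on $H^1(\Omega)$, so your convergence of the interior terms is on the same footing as the paper's.
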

\begin{proof}
  Let $\lambda>0$ and set $u:=(\lambda I-\Delta_\beta)^{-1}1_\Omega$. We know from Proposition~\ref{prop:robin-sg-positive} that $u\geq 0$. If we can show that $\lambda u\leq 1_\Omega$, then $\bigl(\lambda(\lambda I-\Delta_\beta)^{-1}\bigr)^n1_\Omega\leq 1_\Omega$ for all $n\in\mathbb N$. Thus, setting $\lambda=n/t$ and taking limits,
  \begin{equation*}
    T_\beta(t)=\lim_{n\to\infty}
    \Bigl(\frac{n}{t}\Bigl(\frac{n}{t}-\Delta_\beta\Bigr)^{-1}\Bigr)^n1_\Omega\leq
    1_\Omega
  \end{equation*}
  for all $t>0$. It remains to show that $\lambda u\leq 1_\Omega$. By definition of $\Delta_\beta$ there exists $u_\Gamma\in\Tr(u)$ such that
  \begin{equation*}
    \lambda\int_\Omega uv\,dx+\int_\Omega\nabla u\nabla
    v\,dx+\int_\Gamma u_\Gamma v\,d\sigma
    =\int_\Omega 1 v\,dx
  \end{equation*}
  and thus
  \begin{equation}
    \label{eq:robin-torsion}
    \int_\Omega (\lambda u-1)v\,dx+\int_\Omega\nabla u\nabla
    v\,dx+\int_\Gamma u_\Gamma v\,d\sigma
    =0
  \end{equation}
  for all $v\in H^1(\Omega)\cap C(\bar\Omega)$. Since $u_\Gamma\in\Tr(u)$ there exist $0\leq u_n\in H^1(\Omega)\cap C(\bar\Omega)$ such that $u_n\to u$ in $H^1(\Omega)$ and $u_n|_\Gamma\to u_\Gamma$ in $L^2(\Gamma)$ as explained in the proof of Proposition~\ref{prop:robin-locally-smoothing}. In particular $u_\Gamma\geq 0$. Moreover, $(\lambda u-1)^+=\lim_{n\to\infty}v_n$ in $H^1(\Omega)$ if we set $v_n:=(\lambda u_n-1)^{+}\in H^1(\Omega)\cap C(\bar\Omega)$. Applying \eqref{eq:robin-torsion} to $v=v_n$ and using $u_\Gamma\geq 0$ we obtain
  \begin{equation*}
    \int_\Omega (\lambda u-1)v_n\,dx
    +\int_\Omega\nabla u\nabla v_n\,dx
    \leq 0
  \end{equation*}
  for all $n\in\mathbb N$. Sending $n\to\infty$ yields
  \begin{equation*}
    \int_\Omega |(\lambda u-1)^+|^2\,dx
    +\int_{[\lambda u\geq 1]}|\nabla u|^2\,dx
    \leq 0.
  \end{equation*}
  Here we use that $\nabla(\lambda u-1)^+=\lambda 1_{\{\lambda u>1\}}\nabla u$ and thus
  \begin{equation*}
    \int_\Omega \nabla u\nabla(\lambda u-1)^+=\lambda\int_{[\lambda
          u>1]}|\nabla u|^2\geq 0
  \end{equation*}
  Hence $(\lambda u-1)^+=0$, that is, $\lambda u\leq 1_\Omega$ as claimed.
\end{proof}

We finally show that $(T_\beta(t))_{t\geq 0}$ is ultra-contractive. This has been discovered in \cite{daners:00:hke} as a consequence of Mazya's inequality. See also \cite{arendt:03:lrb} for an alternative proof based on the notion of relative capacity. The aim of this exposition is to provide a more elementary proof.

A \emph{submarkovian symmetric semigroup} on $L^2(\Omega)$ is a $C_0$-semigroup $(S(t))_{t\geq 0}$ such that $S(t)=S(t)^*$, $S(t)\geq 0$ and $S(t)1_\Omega\leq 1_\Omega$ for all $t>0$. We note that \cite{davies:89:hks} uses the term ``markovian'' instead of submarkovian. Denote by $-A$ the generator of this semigroup on $L^2(\Omega)$. There exists a unique closed, accretive symmetric form $\aaa_c$ on $L^2(\Omega)$ such that $A$ is associated with $\aaa_c$. Ultra-contractivity with polynomial decay can be characterised in terms of this form $\aaa_c$ as shown in \cite[Theorem~2.4.2]{davies:89:hks}. Note that in \cite{davies:89:hks} a positive form is the same as an accretive, symmetric form, a notion we prefer here to avoid confusion with positivity in the lattice sense as we use it throughout, and also since we restrict ourselves to the real case. In fact, only in the complex case $\aaa(u)\in[0,\infty)$ for all $u\in D(\aaa)$ implies that $\aaa$ is symmetric.

\begin{theorem}[ultra-contractive semigroups]
  \label{thm:ultra-contractive}
  Let $\mu>2$. Then the following assertions are equivalent.
  \begin{enumerate}[\upshape (i)]
  \item There exists $c>0$ such that $\|S(t)u\|_\infty\leq ct^{-\mu/4}\|u\|_2$ for all $u\in L^2(\Omega)$ and all $t>0$.
  \item There exists $c>0$ such that $\|u\|_{2\mu/(\mu-2)}^2\leq   c\aaa_c(u)$ for all $u\in D(\aaa_c)$.
  \end{enumerate}
\end{theorem}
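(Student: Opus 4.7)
The equivalence is essentially Varopoulos' theorem, namely \cite[Theorem~2.4.2]{davies:89:hks}. My plan is to prove (ii)$\Rightarrow$(i) by Nash's method, and the harder direction (i)$\Rightarrow$(ii) via the subordination formula for $A^{-1/2}$ combined with a weak-type interpolation argument.

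For (ii)$\Rightarrow$(i), I will first derive Nash's inequality from the stated Sobolev inequality. Writing $1/2 = \theta + (1-\theta)/q$ with $q = 2\mu/(\mu-2)$ gives $\theta = 2/(\mu+2)$, and H\"older interpolation produces $\|u\|_2 \leq \|u\|_1^{2/(\mu+2)}\|u\|_q^{\mu/(\mu+2)}$. Combining with (ii) yields
\begin{equation*}
  \|u\|_2^{2+4/\mu} \leq C\|u\|_1^{4/\mu}\,\aaa_c(u).
\end{equation*}
For $0\leq u_0 \in L^1(\Omega) \cap L^2(\Omega)$ with $\|u_0\|_1 \leq 1$, set $\phi(t) := \|S(t)u_0\|_2^2$. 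Submarkovianity (together with $\Omega$ having finite measure and positivity of $S(t)$) gives $\|S(t)u_0\|_1 \leq 1$ for all $t \geq 0$ via the duality $\langle S(t)u_0,1_\Omega\rangle = \langle u_0, S(t)1_\Omega\rangle \leq \langle u_0,1_\Omega\rangle$, while symmetry and the functional calculus give $\phi'(t) = -2\aaa_c(S(t)u_0)$. Substituting Nash's inequality produces the differential inequality $\phi'(t) \leq -C\phi(t)^{1+2/\mu}$, which integrates to $\phi(t) \leq C't^{-\mu/2}$. Hence $\|S(t)\|_{L^1\to L^2} \leq ct^{-\mu/4}$, and by self-adjointness and duality $\|S(t)\|_{L^2 \to L^\infty} = \|S(t)\|_{L^1\to L^2} \leq ct^{-\mu/4}$, which is (i).

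For (i)$\Rightarrow$(ii), after a harmless spectral shift so that $A$ is strictly positive, I will invoke the subordination formula $A^{-1/2} = \pi^{-1/2}\int_0^\infty t^{-1/2}S(t)\,dt$. For $f\in L^2$ with $\|f\|_2=1$ and any $s>0$, split the integral at $t=s$ into $A^{-1/2}f = I_1(s) + I_2(s)$. The bound $\|S(t)\|_{L^2\to L^2}\leq 1$ gives $\|I_1(s)\|_2 \leq C s^{1/2}$, and (i) gives $\|I_2(s)\|_\infty \leq C' s^{(2-\mu)/4}$; the latter requires $\mu > 2$ for convergence at infinity. Choosing $s=s(\alpha)$ so that $\|I_2(s)\|_\infty \leq \alpha$ makes $\{|I_2(s)|>\alpha\}$ empty, and Chebyshev applied to $I_1(s)$ yields $|\{|A^{-1/2}f|>2\alpha\}| \leq C''\alpha^{-q}$, i.e.\ the weak-type estimate $A^{-1/2}:L^2\to L^{q,\infty}$.

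The main obstacle will be upgrading this weak-type bound to the strong-type bound $A^{-1/2}:L^2\to L^q$ demanded by (ii). I plan to resolve it by rerunning the same splitting argument for pairs $L^p \to L^{p^*}$ with $1/p^* = 1/p - 1/\mu$ across an open interval of $p$'s around $2$; the auxiliary $L^p\to L^\infty$ bounds needed along the way follow from (i) by duality (self-adjointness) and the semigroup property. Marcinkiewicz interpolation then promotes the family of weak-type estimates to the strong-type estimate at $(p,p^*)=(2,q)$, which is exactly equivalent via $\aaa_c(u) = \|A^{1/2}u\|_2^2$ to the Sobolev-form inequality in (ii).
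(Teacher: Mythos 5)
The paper does not actually prove this theorem: it is quoted from Davies \cite[Theorem~2.4.2]{davies:89:hks}, so there is no internal argument to compare against. Your outline is precisely the classical Nash/Varopoulos proof of that cited result, and it is essentially correct. In the direction (ii)$\Rightarrow$(i) the exponent $\theta=2/(\mu+2)$, the resulting Nash inequality, the $L^1$-bound $\|S(t)u_0\|_1\leq\|u_0\|_1$ obtained by duality from submarkovianity (legitimate here since $\Omega$ is bounded, so $1_\Omega\in L^2(\Omega)$), and the integration of $\phi'\leq -C\phi^{1+2/\mu}$ all check out; only note the trivial case where $\phi$ vanishes, and extend from $0\leq u_0$ with $\|u_0\|_1\leq 1$ to general $u_0$ by positivity and homogeneity before dualising to get the $L^2\to L^\infty$ bound.

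Two points in (i)$\Rightarrow$(ii) deserve to be written out rather than called harmless. First, the spectral shift replaces $\aaa_c(u)$ by $\aaa_c(u)+\varepsilon\|u\|_2^2$, which is not the inequality claimed in (ii); the argument only closes because (i) is assumed for \emph{all} $t>0$, so that $e^{-\varepsilon t}S(t)$ satisfies (i) with the same constant, every constant produced by the splitting and Marcinkiewicz steps depends only on $c$ and $\mu$ and not on $\varepsilon$, and one finally lets $\varepsilon\downarrow 0$, using $D(\aaa_c)=D(A^{1/2})$ and $\aaa_c(u)=\|A^{1/2}u\|_2^2$. Second, for the off-diagonal weak-type bounds at exponents $p\neq 2$ you need $\|S(t)\|_{L^p\to L^p}\leq 1$ (from submarkovianity and symmetry via Riesz--Thorin) and $\|S(t)\|_{L^p\to L^\infty}\leq C t^{-\mu/(2p)}$ (interpolating the $L^1\to L^\infty$ bound, which follows from (i) by duality and the semigroup law, against the $L^\infty$-contraction); the tail integral in the splitting then converges only for $p<\mu$, so the open interval of $p$'s around $2$ must be taken inside $(1,\mu)$, which is possible exactly because $\mu>2$. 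With these details supplied, your proof is a complete, self-contained substitute for the citation, which is arguably a gain since the paper otherwise leans on an external reference at this point.
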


One form of Mazya's inequality is given by
\begin{equation}
  \label{eq:mazya-inequality}
  \|u\|_{2N/(N-1)}^2\leq c\Bigl(\int_\Omega|\nabla u|^2\,dx+\int_\Gamma|u^2|\,d\sigma\Bigr)
\end{equation}
for all $u\in D(\aaa)=H^1(\Omega)\cap C(\bar\Omega)$ and some $c>0$ only depending on $N$ and the measure of $\Omega$, see \cite[Corollary~4.11.1/2]{mazya:85:ssp}. Note that $2N/(N-1)=2\mu/(\mu-2)$ if we set $\mu:=2N$. If $N=1$ we can choose any $\mu>2$ arbitrary since in this case any connected open set is an interval and hence smooth. Since $\beta\geq \delta>0$ it follows that there exists $c_1>0$ such that
\begin{equation}
  \label{eq:mazya-inequality-form}
  \|u\|_{2\mu/(\mu-2)}^2\leq c_1\aaa(u)
\end{equation}
for all $u\in D(\aaa)$. In general, the form with domain $D(\aaa)$ is not closed, and not even closable as mentioned before. We have to show \eqref{eq:mazya-inequality-form} with $\aaa$ replaced by $\aaa_c$. The inequality is not obvious since $\aaa_c(u)\leq \aaa(u)$ for all $u\in H^1(\Omega)\cap C(\bar\Omega)$. If $\aaa$ is not closed we even have $\aaa_c(u)<\aaa(u)$ for some $u\in H^1(\Omega)\cap C(\bar\Omega)$. For this reason we now give a description of $\aaa_C$.

If $S\subseteq\Gamma$ is a Borel set we let
\begin{equation*}
  L^2(\Gamma,S):=\{w\in L^2(\Gamma)\colon\text{$w=0$ $\sigma$-almost
    everywhere on $\Gamma\setminus S$}\}.
\end{equation*}

\begin{proposition}
  \label{prop:trace}
  There exists a Borel set $\Gamma_s\subseteq\Gamma$ such that $\Tr(0)=L^2(\Gamma,\Gamma_s)$, where $0\in H^1(\Omega)$ is the constant function with value zero.
\end{proposition}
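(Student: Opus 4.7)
The plan is to show that $T := \Tr(0)$ is a closed ideal in the Banach lattice $L^2(\Gamma)$, and then invoke the classical structure theorem identifying closed ideals of $L^2$ on a finite measure space with subspaces of the form $L^2(\Gamma,S)$ for some Borel set $S$; see for instance \cite[Section~III.1]{schaefer:74:blp} or Chapter~1 of \cite{meyer:91:bla}.

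First I would verify that $T$ is a closed linear subspace. Linearity is immediate from adding and scaling the approximating sequences. For closedness, given $b_k \in T$ with $b_k \to b$ in $L^2(\Gamma)$, a diagonal selection $v_k \in H^1(\Omega)\cap C(\bar\Omega)$ with $\|v_k\|_{H^1(\Omega)}<2^{-k}$ and $\|v_k|_\Gamma - b_k\|_{L^2(\Gamma)}<2^{-k}$ witnesses $b\in T$.

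The core of the argument is to show that $T$ is stable under multiplication by any $\psi\in L^\infty(\Gamma)$. I would first prove this for $\psi$ equal to the restriction of a Lipschitz function $\tilde\varphi\in W^{1,\infty}(\Omega)\cap C(\bar\Omega)$: if $u_n$ witnesses $b\in T$, then $\tilde\varphi u_n\in H^1(\Omega)\cap C(\bar\Omega)$ by the product rule, $\tilde\varphi u_n\to 0$ in $H^1(\Omega)$ since $\tilde\varphi$ and $\nabla\tilde\varphi$ are bounded, and its boundary values converge to $\tilde\varphi|_\Gamma\, b$ in $L^2(\Gamma)$. Since $\Gamma$ is a compact subset of $\mathbb R^N$, Lipschitz functions on $\Gamma$ extend to Lipschitz functions on $\bar\Omega$ by McShane's theorem, and they are dense in $C(\Gamma)$ by Stone--Weierstrass, hence dense in $L^2(\Gamma)$ as $\sigma(\Gamma)<\infty$. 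Given $\psi\in L^\infty(\Gamma)$, I would approximate it by Lipschitz functions $\phi_j$ in $L^2(\Gamma)$, truncating so that $\|\phi_j\|_\infty\leq \|\psi\|_\infty + 1$. Then $\phi_j b\in T$ by the Lipschitz case, $\phi_j b\to \psi b$ in $L^2(\Gamma)$ by dominated convergence with dominating function $(\|\psi\|_\infty+1)|b|$, and closedness of $T$ gives $\psi b\in T$. The ideal property follows at once: if $|c|\leq |b|$ with $b\in T$, set $\psi:=c/b$ on $\{b\neq 0\}$ and $\psi:=0$ elsewhere; then $\|\psi\|_\infty\leq 1$ and $c=\psi b\in T$.

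The structure theorem then produces a Borel set $\Gamma_s$ with $T=L^2(\Gamma,\Gamma_s)$; explicitly, using separability of $L^2(\Gamma)$ one may take $\Gamma_s=\bigcup_k\{b_k\neq 0\}$ for any countable sequence $(b_k)$ that is dense in $T$. The main obstacle is the passage from Lipschitz multipliers (needed to preserve membership in $H^1(\Omega)$ through the product rule) to arbitrary $L^\infty$ multipliers, and it is resolved by the truncation-and-approximation argument above combined with the closedness of $T$.
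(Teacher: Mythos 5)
Your argument is essentially the paper's: you show that $\Tr(0)$ is a closed lattice ideal of $L^2(\Gamma)$ and then invoke the structure theorem for closed ideals of $L^2$ from \cite[Section~III.1]{schaefer:74:blp}. The only difference is that the paper quotes the $L^\infty(\Gamma)$-invariance $L^\infty(\Gamma)\Tr(0)\subseteq\Tr(0)$ from \cite[Lemma~3.4]{daners:00:rbv}, whereas you prove it directly via Lipschitz multipliers, McShane extension, truncation and closedness; this is correct (only note that to get $\phi_j b\to\psi b$ in $L^2(\Gamma)$ you should first pass to a subsequence of $(\phi_j)$ converging $\sigma$-almost everywhere before applying dominated convergence, which is harmless since $\Tr(0)$ is closed).
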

\begin{proof}
  It follows from the definition that $\Tr(0)$ is a closed subspace of $L^2(\Gamma)$. By \cite[Lemma~3.4]{daners:00:rbv} one has $L^\infty(\Gamma)\Tr(0)\subseteq \Tr(0)$. Thus $\Tr(0)$ is a closed lattice ideal of $L^2(\Gamma)$. Now the claim follows from \cite[Section~III.1, Example~2]{schaefer:74:blp}.
\end{proof}

We call $\Gamma_s$ in the above proposition the \emph{singular part} of $\Gamma$ and $\Gamma_r:=\Gamma\setminus\Gamma_s$ the \emph{regular part}. Next we show that the traces occurring in the definition of $\Delta_\beta$ always live on the regular part.

\begin{lemma}
  \label{lem:domain-trace}
  Let $u\in D(\Delta_\beta)$, that is, $u\in H^1(\Omega)$, $\Delta u\in
    L^2(\Omega)$ and there exists a unique $u_\Gamma\in\Tr(u)$ such that
  $\partial_\nu u+\beta u=0$. Then $u_\Gamma\in L^2(\Gamma,\Gamma_r)$.
\end{lemma}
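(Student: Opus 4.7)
The central idea is to first establish that the normal derivative $\partial_\nu u$ itself already lives in $L^2(\Gamma,\Gamma_r)$, i.e.\ vanishes $\sigma$-a.e.\ on the singular part $\Gamma_s$. Once this is known, the boundary condition $\partial_\nu u = -\beta u_\Gamma$ combined with the positivity $\beta\geq\delta>0$ forces $u_\Gamma$ to vanish on $\Gamma_s$ as well, which is precisely the claim.

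To prove the first step, I would exploit the defining property of $\Gamma_s$ through Proposition~\ref{prop:trace}: since $\Tr(0)=L^2(\Gamma,\Gamma_s)$, any $w\in L^2(\Gamma,\Gamma_s)$ can be obtained as the $L^2(\Gamma)$-limit of a sequence of boundary values $w_n:=v_n|_\Gamma$ where $v_n\in H^1(\Omega)\cap C(\bar\Omega)$ satisfies $v_n\to 0$ in $H^1(\Omega)$. Testing Green's formula \eqref{eq:green-formula} for $u\in D(\Delta_\beta)$ with such $v_n$, I get
\begin{equation*}
  \int_\Omega(\Delta u)v_n\,dx+\int_\Omega\nabla u\cdot\nabla v_n\,dx
  =\int_\Gamma(\partial_\nu u)w_n\,d\sigma.
\end{equation*}
The left-hand side tends to $0$ as $n\to\infty$ because $v_n\to 0$ in $L^2(\Omega)$ and $\nabla v_n\to 0$ in $L^2(\Omega)^N$, while the right-hand side converges to $\int_\Gamma(\partial_\nu u)w\,d\sigma$. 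Hence $\int_\Gamma(\partial_\nu u)w\,d\sigma=0$ for every $w\in L^2(\Gamma,\Gamma_s)$, which means that $\partial_\nu u$ vanishes $\sigma$-a.e.\ on $\Gamma_s$, i.e.\ $\partial_\nu u\in L^2(\Gamma,\Gamma_r)$.

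For the second step, I use the boundary condition $\partial_\nu u+\beta u_\Gamma=0$ in $L^2(\Gamma)$, which gives $\beta u_\Gamma=-\partial_\nu u\in L^2(\Gamma,\Gamma_r)$. Since $\beta(z)\geq\delta>0$ for $\sigma$-a.e.\ $z\in\Gamma$, dividing by $\beta$ shows that $u_\Gamma$ vanishes $\sigma$-a.e.\ on $\Gamma_s$, which is the assertion $u_\Gamma\in L^2(\Gamma,\Gamma_r)$.

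The main obstacle is the first step: one has to recognise that the non-closability of the form, which is what creates the singular part $\Gamma_s$ in the first place, provides exactly the test functions needed to probe Green's formula and conclude that the normal derivative cannot see $\Gamma_s$. This is a clean instance of the duality between the non-uniqueness of approximate traces (encoded in $\Gamma_s$) and the rigidity of the normal derivative. Once the idea is in place, the rest is a short functional-analytic passage to the limit.
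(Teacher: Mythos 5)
Your proposal is correct and follows essentially the same route as the paper: both test Green's formula with a sequence $v_n\in H^1(\Omega)\cap C(\bar\Omega)$, $v_n\to 0$ in $H^1(\Omega)$, whose traces approximate a given $w\in L^2(\Gamma,\Gamma_s)=\Tr(0)$, and pass to the limit. The only cosmetic difference is that the paper substitutes $\partial_\nu u=-\beta u_\Gamma$ into the boundary term before taking the limit, whereas you first record the intermediate fact that $\partial_\nu u$ vanishes on $\Gamma_s$ and then divide by $\beta\geq\delta>0$; the underlying computation is identical.
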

\begin{proof}
  We want to show that $\int_\Gamma u_\Gamma w\,d\sigma = 0$ or equivalently
  \begin{equation}
    \label{eq:trace-integral}
    \int_\Gamma \beta u_\Gamma w\,d\sigma = 0
  \end{equation}
  for all $w\in L^2(\Gamma,\Gamma_s)$. Let $w\in L^2(\Gamma,\Gamma_s)$. By Proposition~\ref{prop:trace} we know that $w\in\Tr(0)$ and thus there exist $v_n\in H^1(\Omega)\cap C(\bar\Omega)$ such that $\|w-v_n\|_{L^2(\Gamma)}<1/n$ and $\|v_n\|_{H^1}<1/n$ for all $n\in\mathbb N$. We have
  \begin{equation*}
    \int_\Omega(\Delta u)v_n\,dx+\int_\Omega\nabla u\cdot\nabla v_n\,dx
    =-\int_\Gamma\beta u_\Gamma v_n\,d\sigma
  \end{equation*}
  for all $n\in\mathbb N$. Sending $n\to\infty$ we obtain \eqref{eq:trace-integral}.
\end{proof}

With the help of the Borel set $\Gamma_r$ we define the form $\aaa_r$ on $L^2(\Omega)$ by $D(\aaa_r)=H^1(\Omega)\cap C(\bar\Omega)$ and
\begin{equation}
  \label{eq:robin-form-regular}
  \aaa_r(u,v):=
  \int_\Omega\nabla u\nabla v\,dx+\int_{\Gamma_r}\beta uv\,d\sigma
\end{equation}
for all $u,v\in D(\aaa_r)$. The form $\aaa_r$ is obviously accretive and symmetric.

\begin{proposition}
  \label{prop:robin-characterisation}
  The form $\aaa_r$ defined above is closable and its closure $\bar\aaa_r$ is the unique closed accretive and symmetric form such that $-\Delta_\beta\sim\bar\aaa_r$.
\end{proposition}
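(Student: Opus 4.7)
The plan is threefold: identify the operator produced by applying Theorem~\ref{thm:abstract-generation} to $\aaa_r$ with $-\Delta_\beta$; show that $\aaa_r$ agrees with the canonical closed form $\aaa_c$ of $-\Delta_\beta$ on $H^1(\Omega)\cap C(\bar\Omega)$, so that $\aaa_r$ is closable; and use uniqueness of the closed form of a positive self-adjoint operator to pin down the closure.

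First, I would show that the operator $A_r$ associated with $\aaa_r$ via Theorem~\ref{thm:abstract-generation} coincides with $-\Delta_\beta$. For the inclusion $-\Delta_\beta\subseteq A_r$, take $u\in D(\Delta_\beta)$ with $-\Delta u=f$ and $u_\Gamma\in\Tr(u)$ satisfying $\partial_\nu u+\beta u_\Gamma=0$. Lemma~\ref{lem:domain-trace} places $u_\Gamma\in L^2(\Gamma,\Gamma_r)$, so the boundary integral in Green's formula satisfies $\int_\Gamma\beta u_\Gamma v\,d\sigma=\int_{\Gamma_r}\beta u_\Gamma v\,d\sigma$ for every $v\in H^1(\Omega)\cap C(\bar\Omega)$. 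Picking $u_n\in H^1(\Omega)\cap C(\bar\Omega)$ from the definition of $\Tr(u)$ with $u_n\to u$ in $H^1(\Omega)$ and $u_n|_\Gamma\to u_\Gamma$ in $L^2(\Gamma)$, the same integration-by-parts used in part~(b) of the proof of Proposition~\ref{prop:robin-laplacian} now yields $\aaa_r(u_n-u_m)\to 0$ and $\aaa_r(u_n,v)\to\langle f,v\rangle$ for every $v\in D(\aaa_r)$, hence $u\in D(A_r)$ with $A_r u=f$. Since $A_r$ and $-\Delta_\beta$ are both self-adjoint, the inclusion upgrades to equality by taking adjoints.

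Second, I would invoke the spectral theorem to get a unique closed accretive symmetric form $\aaa_c$ with $-\Delta_\beta\sim\aaa_c$, given by $D(\aaa_c)=D((-\Delta_\beta)^{1/2})$ and $\aaa_c(u,v)=\langle(-\Delta_\beta)^{1/2}u,(-\Delta_\beta)^{1/2}v\rangle$. To show that $\aaa_r$ is closable with $\bar\aaa_r=\aaa_c$, I would argue that $D(\aaa_r)\subseteq D(\aaa_c)$ with $\aaa_c=\aaa_r$ on $D(\aaa_r)$ (making $\aaa_r$ closable as the restriction of a closed form) and that $D(\aaa_r)$ is dense in $D(\aaa_c)$ in the $\aaa_c$-norm. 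Both assertions follow from the Green's-formula calculation of Step~1: for $u\in D(-\Delta_\beta)$ approximated by $u_n\in H^1(\Omega)\cap C(\bar\Omega)$ as above,
\begin{equation*}
\aaa_r(u_n)\to\|\nabla u\|_{L^2}^2+\int_{\Gamma_r}\beta u_\Gamma^2\,d\sigma=\langle -\Delta_\beta u,u\rangle=\aaa_c(u,u),
\end{equation*}
so $u$ lies in the $\aaa_c$-closure of $D(\aaa_r)$ with matching form value, and density of $D(-\Delta_\beta)$ in $D(\aaa_c)$ extends this to all of $D(\aaa_c)$. Uniqueness of $\bar\aaa_r$ among closed accretive symmetric forms associated with $-\Delta_\beta$ is the standard spectral-theoretic fact.

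The hard part is the identification $\aaa_c|_{D(\aaa_r)}=\aaa_r$ together with the $\aaa_c$-density of $D(\aaa_r)$ in $D(\aaa_c)$. The crucial input is the support property of $u_\Gamma$ in Lemma~\ref{lem:domain-trace}: without it, Green's formula would produce a boundary integral over all of $\Gamma$, and the $\Gamma_s$-contribution that $\aaa_r$ cannot see would break the matching between $\aaa_r$ and $\aaa_c$.
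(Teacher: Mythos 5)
Your first step---identifying the operator associated with $\aaa_r$ via Theorem~\ref{thm:abstract-generation} with $-\Delta_\beta$ by means of Lemma~\ref{lem:domain-trace} and the approximating sequences from Corollary~\ref{cor:robin-laplacian}---is correct and is essentially the second half of the paper's proof. The gap is in your closability argument. Knowing that the associated operator is $-\Delta_\beta$ cannot by itself give closability: the full form $\aaa$ from \eqref{eq:robin-form} is also associated with $-\Delta_\beta$ (Proposition~\ref{prop:robin-laplacian}), yet it is in general not closable, and the paper explicitly notes that one may have $\aaa_c(u)<\aaa(u)$ for some $u\in H^1(\Omega)\cap C(\bar\Omega)$. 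Hence the statement you rely on---$D(\aaa_r)\subseteq D(\aaa_c)$ with $\aaa_c=\aaa_r$ on $D(\aaa_r)$, so that $\aaa_r$ is ``a restriction of a closed form''---is precisely the nontrivial content of the proposition and cannot be taken as a stepping stone. Your supporting computation only treats $u\in D(\Delta_\beta)$ and only shows convergence of the scalar values $\aaa_r(u_n)$ to $\aaa_c(u,u)$; it does not show that the approximants $u_n\in H^1(\Omega)\cap C(\bar\Omega)$ lie in $D(\aaa_c)$, nor that they converge to $u$ in the form norm, so the conclusion that ``$u$ lies in the $\aaa_c$-closure of $D(\aaa_r)$ with matching form value'' does not follow---and the phrase itself presupposes $D(\aaa_r)\subseteq D(\aaa_c)$, which is what was to be proved. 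The argument is circular at this point.

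What is actually needed, and what the paper does, is a direct verification of closability: if $u_n\in H^1(\Omega)\cap C(\bar\Omega)$, $u_n\to 0$ in $L^2(\Omega)$ and $\aaa_r(u_n-u_m)\to 0$, then $\aaa_r(u_n)\to 0$. Since $\beta\geq\delta>0$, the traces $u_n|_{\Gamma_r}$ converge in $L^2(\Gamma_r)$ to some $b$, and one must show $b=0$. This is where Proposition~\ref{prop:trace}, i.e.\ the identification $\Tr(0)=L^2(\Gamma,\Gamma_s)$, enters: one corrects $u_n$ by functions $w_n\in H^1(\Omega)\cap C(\bar\Omega)$ with $\|w_n\|_{H^1}<1/n$ whose boundary values approximate $u_n1_{\Gamma_s}$, so that $v_n:=u_n-w_n\to 0$ in $H^1(\Omega)$ while $v_n|_\Gamma$ converges in $L^2(\Gamma)$ to the extension of $b$ by zero on $\Gamma_s$; that limit then lies in $\Tr(0)\cap L^2(\Gamma,\Gamma_r)=\{0\}$, giving $b=0$. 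Lemma~\ref{lem:domain-trace}, which you single out as the crucial input, handles the identification of the operator but not this step; the structural description of $\Tr(0)$ as a closed lattice ideal supported on $\Gamma_s$ is the missing ingredient. Once closability is established, your Step~1 together with the standard uniqueness of the closed accretive symmetric form of a self-adjoint operator completes the proof exactly as in the paper.
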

\begin{proof}
  We first show that $\aaa_r$ is closable. Let $u_n\in H^1(\Omega)\cap C(\bar\Omega)$ such that $u_n\to 0$ in $L^2(\Omega)$ and $\aaa_r(u_n-u_m) \to 0$ as $n,m\to\infty$. Then $(u_n)$ is a Cauchy sequence in $H^1(\Omega)$ and $u_n\to 0$ in $H^1(\Omega)$. Moreover, as $\beta\geq\delta>0$, we also have that $u_n|_{\Gamma_r}\to b$ in $L^2(\Gamma)$ for some $b\in L^2(\Gamma,\Gamma_r)$. Since by Proposition~\ref{prop:trace} $u_n1_{\Gamma_s}\in L^2(\Gamma,\Gamma_s)=\Tr(0)$, there exists $w_n\in H^1(\Omega)\cap C(\bar\Omega)$ such that $\|w_n\|_{H^1}<1/n$ and $\|u_n1_{\Gamma_s}-w_n\|_{L^2(\Gamma)}<1/n$ for all $n\in\mathbb N$. Thus $v_n:=u_n-w_n\in H^1(\Omega)\cap C(\bar\Omega)$ and $v_n\to 0$ in $H^1(\Omega)$ as $n\to\infty$. Let $\tilde b\in L^2(\Gamma)$ with $\tilde b(z)=b(z)$ for $z\in\Gamma_r$ and $\tilde b(z)=0$ for $z\in\Gamma_s$. Using that $\Gamma=\Gamma_r\cup\Gamma_s$ is a disjoint union, we deduce that
  \begin{equation*}
    \begin{split}
      \|v_n-\tilde b\|_{L^2(\Gamma)}
      &=\|u_n-\tilde b-w_n\|_{L^2(\Gamma_r)}+\|u_n-w_n\|_{L^2(\Gamma_s)}\\
      &\leq\|u_n-\tilde b\|_{L^2(\Gamma_r)}+\|w_n\|_{L^2(\Gamma_r)}+\|u_n-w_n\|_{L^2(\Gamma_s)}\\
      &=\|u_n-\tilde b\|_{L^2(\Gamma_r)}+\|u_n1_{\Gamma_s}-w_n\|_{L^2(\Gamma_s)}
      \to 0
    \end{split}
  \end{equation*}
  as $n\to\infty$. Therefore $\tilde b\in\Tr(0)\cap L^2(\Gamma_r)=L^2(\Gamma_s)\cap   L^2(\Gamma_r)=\{0\}$. This shows that $u_n|_{\Gamma_r}\to 0$ in $L^2(\Gamma_r)$ and so $\aaa_r$ is closable.

  We next show that $-\Delta\sim\aaa_r$. Let $A$ be the operator on $L^2(\Omega)$ associated with $\aaa_r$. We show that $-\Delta_\beta\subseteq A$. Since $I+A\colon D(A)\to L^2(\Omega)$ and $I-\Delta_\beta\colon D(\Delta_\beta)\to L^2(\Omega)$ are both injective this implies that $-\Delta_\beta=A$.

  Let $u\in D(\Delta_\beta)$ and $-\Delta_\beta u=f$. Then by Corollary~\ref{cor:robin-laplacian}  we know that $u\in H^1(\Omega)$, $u_\Gamma\in L^2(\Gamma)$ and there exist $u_n\in H^1(\Omega)\cap C(\bar\Omega)$ such that $u_n\to u$ in $H^1(\Omega)$, $u_n|_{\Gamma}\to u_\Gamma$ and
  \begin{equation*}
    \int_\Omega \nabla u_n\cdot\nabla v\,dx+\int_{\Gamma} \beta u_nv\,d\sigma
    \to\int_\Omega fv\,dx
  \end{equation*}
  as $n\to\infty$ for all $v\in H^1(\Omega)\cap C(\bar\Omega)$. By Lemma~\ref{lem:domain-trace} one has $u_{\Gamma}\in   L^2(\Gamma,\Gamma_r)$. Thus
  \begin{equation*}
    \aaa_r(u_n,v)\to\int_\Omega fv\,dx
  \end{equation*}
  as $n\to\infty$ for all $v\in H^1(\Omega)\cap C(\bar\Omega)$. Hence, by definition of $A$ we have $u\in D(A)$ and $Au=f$.
\end{proof}

We can finally prove the ultra-contractivity of $(T_\beta(t))_{t\geq 0}$.

\begin{proposition}
  Let $(T_\beta(t))_{t\geq 0}$ be the $C_0$-semigroup generated by $-\Delta$. Let $\mu=2N$ if $N\geq 2$ and $\mu\in(2,\infty)$ arbitrary for $N=1$. Then there exists a constant $c>0$ such that
  \begin{equation*}
    \|T_\beta(t)u\|_\infty\leq ct^{-\mu/4}\|u\|_2,
  \end{equation*}
  for all $u\in L^2(\Omega)$ and all $t>0$.
\end{proposition}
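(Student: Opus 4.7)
The plan is to reduce the ultra-contractivity to the abstract criterion of Theorem~\ref{thm:ultra-contractive}. The semigroup is symmetric (since $\Delta_\beta$ is self-adjoint) and submarkovian by Proposition~\ref{prop:robin-submarkovian}, so the theorem applies with $\aaa_c=\bar\aaa_r$, the closed form from Proposition~\ref{prop:robin-characterisation}. What needs to be verified is the Sobolev-type inequality
\begin{equation*}
  \|u\|_{2\mu/(\mu-2)}^2\leq c\,\bar\aaa_r(u)
  \qquad\text{for all }u\in D(\bar\aaa_r).
\end{equation*}
The main issue is that Mazya's inequality \eqref{eq:mazya-inequality-form} controls the $L^{2\mu/(\mu-2)}$-norm by $\aaa(u)$ (integrating $\beta u^2$ over the whole boundary $\Gamma$), whereas $\aaa_r$ only integrates over the regular part $\Gamma_r$. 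I need to discard the singular boundary contribution.

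First I would prove the inequality on $D(\aaa_r)=H^1(\Omega)\cap C(\bar\Omega)$. Fix $u\in D(\aaa_r)$. Since $u|_{\Gamma_s}\in L^2(\Gamma,\Gamma_s)=\Tr(0)$ by Proposition~\ref{prop:trace}, choose $w_n\in H^1(\Omega)\cap C(\bar\Omega)$ with $\|w_n\|_{H^1}\to 0$ and $\|w_n|_\Gamma-u|_{\Gamma_s}\|_{L^2(\Gamma)}\to 0$. Set $v_n:=u-w_n\in D(\aaa_r)$. Then $v_n\to u$ in $H^1(\Omega)$, hence in $L^{2\mu/(\mu-2)}(\Omega)$ by the Sobolev embedding underpinning \eqref{eq:mazya-inequality}, and $v_n|_\Gamma\to u\,1_{\Gamma_r}$ in $L^2(\Gamma)$. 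Applying \eqref{eq:mazya-inequality-form} to $v_n$ and passing to the limit yields
\begin{equation*}
  \|u\|_{2\mu/(\mu-2)}^2\leq c_1\Bigl(\int_\Omega|\nabla u|^2\,dx+\int_{\Gamma_r}\beta u^2\,d\sigma\Bigr)
  =c_1\aaa_r(u).
\end{equation*}

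Next I would extend the inequality to the closure. Let $u\in D(\bar\aaa_r)$ and pick $u_n\in D(\aaa_r)$ with $u_n\to u$ in $L^2(\Omega)$ and $\aaa_r(u_n-u_m)\to 0$. By the inequality just established, $(u_n)$ is Cauchy in $L^{2\mu/(\mu-2)}(\Omega)$; passing to a subsequence to identify the limit pointwise a.e.\ with $u$, we obtain $u\in L^{2\mu/(\mu-2)}(\Omega)$ and $u_n\to u$ in $L^{2\mu/(\mu-2)}(\Omega)$. Since $\aaa_r(u_n)\to\bar\aaa_r(u)$, the inequality passes to the limit with the same constant $c_1$.

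For $N\geq 2$ take $\mu=2N$ so that $2\mu/(\mu-2)=2N/(N-1)$ matches \eqref{eq:mazya-inequality}. For $N=1$ the connected bounded domain $\Omega$ is an interval, hence smooth, and \eqref{eq:mazya-inequality} holds with $2N/(N-1)$ replaced by any exponent in $(2,\infty)$, so the argument gives the bound for every $\mu>2$. In both cases, Theorem~\ref{thm:ultra-contractive} now yields the estimate $\|T_\beta(t)u\|_\infty\leq ct^{-\mu/4}\|u\|_2$. The only delicate step is the cancellation of the singular trace, which rests on the characterisation $\Tr(0)=L^2(\Gamma,\Gamma_s)$ from Proposition~\ref{prop:trace}; everything else is a routine density/limit passage.
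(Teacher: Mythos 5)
Your overall strategy is exactly the paper's: use Proposition~\ref{prop:trace} to subtract correctors $w_n$ with $\|w_n\|_{H^1}\to 0$ and $w_n|_\Gamma\to u1_{\Gamma_s}$, apply Mazya's inequality \eqref{eq:mazya-inequality-form} to $v_n=u-w_n$, pass to the limit to get $\|u\|_{2\mu/(\mu-2)}^2\leq c_1\aaa_r(u)$ on $D(\aaa_r)$, extend to the closure, and conclude via Theorem~\ref{thm:ultra-contractive} and Proposition~\ref{prop:robin-characterisation}. Your explicit extension to $D(\bar\aaa_r)$ is in fact a step the paper leaves implicit, and it is correct as you present it.

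There is, however, one step whose justification as written would fail: the claim that $v_n\to u$ in $H^1(\Omega)$ implies $v_n\to u$ in $L^{2\mu/(\mu-2)}(\Omega)$ ``by the Sobolev embedding underpinning \eqref{eq:mazya-inequality}''. On an arbitrary bounded domain there is no embedding $H^1(\Omega)\hookrightarrow L^{2\mu/(\mu-2)}(\Omega)$; the boundary integral in Mazya's inequality is there precisely because such an embedding may fail on rough domains (this is also why the Neumann Laplacian needs extra hypotheses on $\Omega$ for ultra-contractivity, cf.\ Section~\ref{sec:neumann-laplacian}). Note that you cannot simply apply \eqref{eq:mazya-inequality-form} to $w_n$ either, since $w_n|_\Gamma$ converges to $u1_{\Gamma_s}$, not to $0$. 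The gap is easily repaired in two ways: either argue as the paper does, passing to a subsequence with $w_n\to 0$ almost everywhere in $\Omega$ and using Fatou's lemma (lower semicontinuity of the $L^{2\mu/(\mu-2)}$-norm) on the left-hand side while the right-hand side converges to $\aaa_r(u)$ by the $H^1$- and $L^2(\Gamma)$-convergences; or apply \eqref{eq:mazya-inequality-form} to the differences $w_n-w_m$, whose gradients tend to $0$ in $L^2(\Omega)$ and whose traces tend to $0$ in $L^2(\Gamma)$, to see that $(v_n)$ is Cauchy in $L^{2\mu/(\mu-2)}(\Omega)$ with limit $u$. With either repair your argument is complete and coincides with the paper's proof.
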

\begin{proof}
  By \eqref{eq:mazya-inequality}
  \begin{equation*}
    \|u\|_{2\mu/(\mu-2)}^2\leq c_1\aaa(u)
    =c_1\Bigl(\int_\Omega|\nabla u|^2\,dx + \int_\Gamma|u|^2\,d\sigma\Bigr)
  \end{equation*}
  for all $u\in H^1(\Omega)\cap C(\bar\Omega)$. Given $u\in H^1(\Omega)\cap C(\bar\Omega)$ there exist $w_n\in H^1(\Omega)\cap C(\bar\Omega)$ such that $w_n\to 0$ in $H^1(\Omega)$ and $w_n|_\Gamma\to u1_{\Gamma_r}$ in $L^2(\Gamma)$ and $\sigma$-almost everywhere. Then $u_n:=u-w_n\in H^1(\Omega)\cap C(\bar\Omega)$ with $u_n\to u$ in $H^1(\Omega)$ and almost everywhere. Moreover $u_n|_{\Gamma}\to u1_{\Gamma_r}$ in $L^2(\Gamma)$.

  Applying \eqref{eq:mazya-inequality-form} to $u_n$ we obtain by Fatou's Lemma
  \begin{multline*}
    \|u\|_{2\mu/(\mu-2)}^2
    \leq c_1\liminf_{n\to\infty}\Bigl(\int_\Omega|\nabla u_n|^2\,dx
    + \int_\Gamma|u_n|^2\,d\sigma\Bigr)\\
    =c_1\Bigl(\int_\Omega|\nabla u|^2\,dx
    + \int_{\Gamma_r}|u_n|^2\,d\sigma\Bigr)
    =c_1\aaa_r(u).
  \end{multline*}
  Now the claim follows from Theorem~\ref{thm:ultra-contractive}.
\end{proof}
This finally completes the proof of Theorem~\ref{thm:robin-laplacian}.

\subsection{The Laplacian with Neumann boundary conditions}
\label{sec:neumann-laplacian}
Let again $\Omega\subseteq\mathbb R^N$ be an open, bounded and connected set. The \emph{Laplacian with Neumann boundary conditions} or briefly the \emph{Neumann Laplacian} is the operator $\Delta_N$ on $L^2(\Omega)$ defined by
\begin{equation}
  \label{eq:neumann-laplacian}
  \begin{aligned}
    D(\Delta_N) & =\{u\in H^1(\Omega)\colon\Delta u\in
    L^2(\Omega),\partial_\nu u=0\},                              \\
    \Delta_Nu   & :=\Delta u\qquad\text{for $u\in D(\Delta_N)$.}
  \end{aligned}
\end{equation}
Here the outer normal derivative $\partial_\nu$ is to be interpreted in a
generalised sense.  For $u\in H^1(\Omega)$ such that
$\Delta u\in L^2(\Omega)$ we say that $\partial_\nu u=0$ if
\begin{equation*}
  \int_\Omega(\Delta u)v\,dx+\int_\Omega\nabla u\cdot\nabla v\,dx=0
  \text{ for all }v\in H^1(\Omega).
\end{equation*}
This is consistent with Definition~\ref{def:normal-derivative} if $\mathcal   H^{N-1}(\Gamma)<\infty$ and $H^1(\Omega)\cap C(\bar\Omega)$ is dense in $H^1(\Omega)$. This is for instance the case if $\Omega$ has continuous boundary.  It is easy to see that $-\Delta_N$  is associated with the closed, accretive and symmetric form on $L^2(\Omega)$ given by
\begin{equation}
  \label{eq:neumann-form}
  \aaa(u,v):=\int_{\Omega}\nabla u\cdot\nabla v\,dx
\end{equation}
for all $u,v\in D(\aaa):=H^1(\Omega)$. Thus $\Delta$ is self-adjoint and dissipative and hence generates a contractive $C_0$-semigroup $(T_N(t))_{t\geq 0}$ on $L^2(\Omega)$. This semigroup is positive and irreducible since in particular
\begin{equation}
  \label{eq:neumann-domination}
  0\leq T_D(t)\leq T_N(t)
\end{equation}
for all $t>0$ and $(T_D(t))_{t\geq 0}$ is irreducible. The proof of \eqref{eq:neumann-domination} is very similar to that of \eqref{eq:dirichlet-domination}. Again, the local smoothing property can be proved in the same way as in Theorem~\ref{thm:dirichlet-laplacian}. Elliptic regularity implies that $T(t)L^2(\Omega)\subseteq C(\Omega)$ and the domination property \eqref{eq:neumann-domination} implies \eqref{eq:zero-regular}.

The last property we need is the ultra-contractivity. This cannot be expected for general $\Omega$, so we need to require some regularity of the boundary. We cannot directly apply Theorem~\ref{thm:ultra-contractive}, but we can apply it to the form
\begin{equation*}
  \aaa_1(u,v):=\int_\Omega\nabla u\cdot\nabla v\,dx+\int_\Omega uv\,dx
\end{equation*}
on $D(\aaa_1)=H^1(\Omega)$. This is the closed form associated with $I-\Delta_N$. Moreover, $\aaa_1(u)=\|u\|_{H^1}^2$. The corresponding semigroup is given by $e^{-t}T_N(t)$ for all $t>0$. If $\Omega$ allows an embedding $H^1(\Omega)\subseteq L^p(\Omega)$ for some $p>2$, then there exists a constant $c>0$ such that
\begin{equation*}
  \|u\|_{2\mu/(\mu-2)}^2\leq c\aaa_1(u)
\end{equation*}
for all $u\in D(\aaa_1)$. Indeed, a simple computation shows that $\mu=2p/(p-2)>2$. It follows that there exists $c_1>0$ such that
\begin{equation*}
  \|T_N(t)u\|_\infty\leq c_1e^{t}t^{-\mu}\|u\|_2
\end{equation*}
for all $u\in L^2(\Omega)$, which implies that $(T_N(t))_{t\geq 0}$ is ultra-contractive.

If $\Omega$ is a Lipschitz domain, then the standard Sobolev inequality applies and thus $(T_N(t))_{t\geq 0}$ is ultra-contractive by the above reasoning. The same remains true if $\Omega$ satisfies a uniform interior cone condition, see \cite[Theorem~4.4 and~5.2]{daners:00:hke}. Other examples include domains with outward pointing cusps of polynomial order such as discussed in \cite[Theorem~5.35]{adams:75:ssp} with $p>N$ depending on the sharpness of the cusp. We summarise our discussion in the following theorem.

\begin{theorem}[Neumann Laplacian]
  Assume that $\Omega\subseteq\mathbb R^N$ is open, bounded, connected. Then the $C_0$-semigroup $(T_N(t))_{t\geq 0}$ generated by the Neumann Laplacian is positive, irreducible, holomorphic, and locally smoothing.  If $\Omega$ allows an embedding $H^1(\Omega)\subseteq L^p(\Omega)$ for some $p>2$, then $(T_N(t))_{t\geq 0}$ is also ultra-contractive.
\end{theorem}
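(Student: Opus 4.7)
The plan is to verify each listed property in turn, using the tools the paper has set up and drawing several arguments in direct analogy with the Dirichlet and Robin cases. For self-adjointness and holomorphy, I would note that the bilinear form $\aaa(u,v):=\int_\Omega \nabla u\cdot\nabla v\,dx$ with $D(\aaa)=H^1(\Omega)$ is symmetric, accretive, and closed because $\aaa(u)+\|u\|_2^2=\|u\|_{H^1}^2$. By Theorem~\ref{thm:abstract-generation} (or classical form theory) $-\Delta_N$ is the self-adjoint operator associated with $\aaa$, so it generates a contractive holomorphic $C_0$-semigroup on $L^2(\Omega)$. Positivity of $T_N(t)$ follows from the Beurling--Deny criterion applied to $\aaa$, since $u\in H^1(\Omega)$ implies $u^+\in H^1(\Omega)$ with $\nabla u^+=1_{\{u>0\}}\nabla u$.

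For irreducibility and local smoothing, I would transfer structural properties from the Dirichlet Laplacian. The domination $0\leq T_D(t)\leq T_N(t)$ is proved by the same resolvent calculation used in Proposition~\ref{prop:robin-domination}: given $0\leq f\in L^2(\Omega)$ and $\lambda>0$, set $u:=(\lambda I-\Delta_N)^{-1}f$ and $w:=(\lambda I-\Delta_D)^{-1}f$, test the weak form of $(\lambda I-\Delta_N)u-(\lambda I-\Delta_D)w=0$ with $(w-u)^+\in H_0^1(\Omega)$, and use $\int_\Omega\nabla(w-u)\cdot\nabla(w-u)^+\,dx=\int_\Omega|\nabla(w-u)^+|^2\,dx$. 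Irreducibility of $T_N(t)$ is then immediate from irreducibility of $T_D(t)$ established in Theorem~\ref{thm:dirichlet-laplacian}. For local smoothing, holomorphy gives $T_N(t)L^2(\Omega)\subseteq D(\Delta_N^k)$ for every $k\in\mathbb N$; interior elliptic regularity yields $D(\Delta_N^k)\subseteq H^{2k}_{\loc}(\Omega)$, and Sobolev embedding gives $H^{2k}_{\loc}(\Omega)\subseteq C(\Omega)$ for $k>N/4$. Condition \eqref{eq:zero-regular} then follows from $[T_N(t)|w|](x)\geq [T_D(t)|w|](x)>0$, with $w$ chosen as in the Dirichlet case.

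The main obstacle is ultra-contractivity, because Theorem~\ref{thm:ultra-contractive} cannot be applied to $\aaa$ directly: the constants lie in the kernel of $\aaa$, so no bound of the form $\|u\|_{2\mu/(\mu-2)}^2\leq c\,\aaa(u)$ can possibly hold. The fix is to shift: I would work with the closed accretive symmetric form $\aaa_1(u,v):=\aaa(u,v)+\langle u,v\rangle_{L^2}$ associated with $I-\Delta_N$, whose semigroup is $e^{-t}T_N(t)$ and whose form norm equals the $H^1$-norm. Under the hypothesis $H^1(\Omega)\hookrightarrow L^p(\Omega)$ for some $p>2$, a direct computation gives $\mu:=2p/(p-2)>2$ and $\|u\|_{2\mu/(\mu-2)}^2=\|u\|_p^2\leq c\,\aaa_1(u)$ for every $u\in H^1(\Omega)$. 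Theorem~\ref{thm:ultra-contractive} then yields $\|e^{-t}T_N(t)u\|_\infty\leq c\,t^{-\mu/4}\|u\|_2$, whence $T_N(t)L^2(\Omega)\subseteq L^\infty(\Omega)$ for all $t>0$; that is, $(T_N(t))_{t\geq 0}$ is ultra-contractive in the sense of Definition~\ref{def:ultra-contractive}.
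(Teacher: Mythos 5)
Your proposal is correct and follows essentially the same route as the paper: self-adjointness and holomorphy from the closed form $\int_\Omega\nabla u\cdot\nabla v\,dx$ on $H^1(\Omega)$, irreducibility, \eqref{eq:zero-regular} and the interior smoothing via domination $0\leq T_D(t)\leq T_N(t)$ together with the same elliptic-regularity argument as in the Dirichlet case, and ultra-contractivity by applying Theorem~\ref{thm:ultra-contractive} to the shifted form $\aaa_1$ associated with $I-\Delta_N$ under the embedding $H^1(\Omega)\subseteq L^p(\Omega)$, $p>2$. The only cosmetic differences (proving positivity directly by Beurling--Deny rather than reading it off the domination, and modelling the domination proof on Proposition~\ref{prop:robin-domination}) do not change the argument.
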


\subsection{The semilinear problem for the Laplacian}
\label{sec:semilinear-laplacian}
Let $\Omega\subseteq\mathbb R^N$ be bounded, open and connected. Let $g$ be a function satisfying the conditions of Section~\ref{sec:logistic}. Let $A$ be one of the three operators
\begin{enumerate}[(a)]
\item $A=-\Delta_D$;
\item $A=-\Delta_\beta$, where $\beta\in L^\infty(\Gamma)$,
  $\beta\geq\delta$ for some constant $\delta>0$,
  $\Gamma=\partial\Omega$, and where we assume that
  $\mathcal H^{N-1}(\Gamma)<\infty$;
\item $A=-\Delta_N$ under assumptions that guarantee that
  $H^1(\Omega)\subseteq L^p(\Omega)$ for some $p>2$ (in particular if
  $\Omega$ satisfies a uniform interior cone condition).
\end{enumerate}
We let $0<m\in L^\infty(\Omega)$ and that $\lambda\in\mathbb R$ satisfies the conditions of Theorem~\ref{thm:main}. Then the problem \eqref{eq:logeq} has a unique non-trivial positive solution.

When dealing with semilinear problems one is often interested in \emph{weak solutions}. Let $-A$ be one of the realisations of the Laplace operators considered above. As we have seen, this operator is associated with a closed symmetric and accretive form $\aaa$ with some domain $D(\aaa)$. Let $V=D(\aaa)$ with norm
\begin{equation*}
  \|u\|_V:=\bigl(\|u\|_{L^2(\Omega)}^2+\aaa(u)\bigr)^{1/2}.
\end{equation*}
Then $V$ is a Hilbert space with $V\hookrightarrow H$ which is dense in $H$.  Let $\tilde A\in\mathcal L(V,V')$ be defined by $\langle\tilde Au,v\rangle_{V',V}:=\aaa(u,v)$ for all $u,v\in V$. Then $(\omega I+\tilde A)^{-1}\in\mathcal L(V',V)$ for all $\omega>0$. We call $u\in V$ a \emph{weak solution} of \eqref{eq:logeq} if $mg(\cdot\,,u)u\in V'$ and
\begin{equation*}
  \aaa(u,v)=\lambda\int_\Omega uv\,dx-\int_\Omega mg(\cdot\,,u)uv\,dx
\end{equation*}
for all $v\in D(\aaa)$, or equivalently
\begin{equation*}
  \tilde Au=\lambda u-mg(\cdot\,,u)u\in V'.
\end{equation*}
If we choose $\omega>0$, then the fixed point equation \eqref{eq:fixed-pt-eq} is still valid with $A$ replaced by $\tilde A$ and hence
\begin{equation*}
  u
  =(\lambda+\omega)(\omega I+A)^{-1}u
  -(\omega I+\tilde A)^{-1}\bigl(mg(\cdot\,, u)u\bigr)
  \leq(\lambda+\omega)(\omega I+A)^{-1}u,
\end{equation*}
showing that \eqref{fixed-pt-ineq} still applies. Thus the proof of Proposition~\ref{prop:solution-bounded} shows that any weak solution $u$ is in $L^\infty(\Omega)$. As a consequence, $mg(\cdot\,,u)u\in L^2(\Omega)$, which in turn implies that $u\in D(A)$ by \eqref{eq:fixed-pt-eq} and thus $u$ is a solution of \eqref{eq:logeq} in the sense of Section~\ref{sec:logistic}. Hence all results in Section~\ref{sec:logistic} apply to weak solutions.

\section{Elliptic boundary value problems in divergence form}
\label{sec:elliptic-divergence-form}
In this section we briefly outline how the abstract results in this paper apply to more general elliptic operators, not necessarily self-adjoint. This includes the remarks in Subsection~\ref{sec:semilinear-laplacian} on weak solutions to the abstract logistic equation.

Suppose that $\Omega\subseteq\mathbb R^N$ is a bounded domain and that $\mathcal A$ is an elliptic operator in divergence form given by
\begin{equation*}
  \mathcal Au:=-\sum_{k=1}^N\frac{\partial}{\partial
    x_k}\Bigl(\sum_{j=1}^Na_{jk}\frac{\partial u}{\partial
    x_j}+a_ku\Bigr)+\sum_{k=1}^Nb_k\frac{\partial u}{\partial x_k}+cu
\end{equation*}
with real valued $a_{jk}, a_k, b_k, c\in L^\infty(\Omega)$ for $1\leq j,k\leq N$. Furthermore assume that there exists $\alpha>0$ such that
\begin{equation*}
  \alpha|\xi|^2
  \leq \sum_{k=1}^N\sum_{j=1}^Na_{kj}(x)\xi_j\xi_k
\end{equation*}
for all $\xi=(\xi_1,\dots,\xi_N)\in\mathbb R^N$ and almost all $x\in\Omega$. We further assume that the boundary operator is given by
\begin{equation*}
  \mathcal Bu=
  \begin{cases}
    u|_{\Gamma_0}                & \text{on $\Gamma_0$ (Dirichlet)}        \\
    \sum_{k=1}^N\Bigl(\sum_{j=1}^Na_{jk}\frac{\partial u}{\partial
    x_j}+a_ku\Bigr)\nu_k+\beta u & \text{on $\Gamma_1$ (Neumann or Robin)}
  \end{cases}
\end{equation*}
where $\partial\Omega=\Gamma_0\cup\Gamma_1$ is a disjoint union,
$\nu=(\nu_1,\dots,\nu_N)$ the outer unit normal and
$\beta\in L^\infty(\partial\Omega,\mathbb R)$. These boundary conditions
have to be interpreted in a generalised sense via the from associated with $(\mathcal A,\mathcal B)$. That form is given by
\begin{equation*}
  \aaa(u,v):=
  \int_{\Omega}\sum_{k=1}^N\Bigl(\sum_{j=1}^Na_{jk}\frac{\partial u}{\partial
    x_j}+a_ku\Bigr)\overline{\frac{\partial v}{\partial
      x_k}}+\Bigl(\sum_{k=1}^Nb_k\frac{\partial u}{\partial
    x_k}+cu\Bigr)\overline v\,dx
  +\int_{\Gamma_1}\beta u\overline v\,d\sigma
\end{equation*}
for all $u,v$ in a suitable closed subspace of $V$ of $H^1(\Omega)$ such that $H_{0}^1(\Omega)\subseteq V$ depending on the regularity of $\Omega$ and the boundary conditions. Here, $d\sigma$ is the $(N-1)$-dimensional Hausdorff measure restricted to $\Gamma_1$. We will give examples below that guarantee that $\aaa\colon V\times V\to\mathbb R$ is continuous and elliptic, that is, there exists $\alpha_0>0$, $\omega\in\mathbb R$ and $c>0$ with
\begin{equation}
  \label{eq:elliptic-forms}
  \alpha_0\|u\|_V^2\leq\aaa(u,u)+\omega\|u\|_2^2
  \quad\text{and}\quad
  |\aaa(u,v)|\leq c\|u\|_V\|v\|_V
\end{equation}
for all $u,v\in V$. We define $\tilde A\in\mathcal L(V,V')$ by
\begin{equation*}
  \langle\tilde Au,v\rangle_{V',V}:=\aaa(u,v)
\end{equation*}
for all $u,v\in V$ and let $A$ be the part of $\tilde A$ in $L^2(\Omega)$, that is,
\begin{equation*}
  D(A):=\{u\in V\colon \tilde Au\in L^2(\Omega)\}\qquad
  Au:=\tilde Au.
\end{equation*}
Then $-A$ generates a positive irreducible holomorphic $C_0$-semigroup $(T(t))_{t\geq 0}$ on $L^2(\Omega)$; see for instance \cite[Section~1.4, Theorem~2.7 and Corollary~2.11]{ouhabaz:05:ahe}. We recall a connection of the abstract theory with the usual theory of (local) weak solutions to parabolic equations as for instance defined in \cite{aronson:68:nsl,aronson:67:lbs}.

\begin{proposition}
  \label{prop:form-generation}
  Under the above assumptions the following assertions hold.
  \begin{enumerate}[\upshape (i)]
  \item For every $u_0\in L^2(\Omega)$, the function $u(t):=T(t)u_0$ is a weak solution of
    \begin{equation}
      \label{eq:local-weak-solution}
      \begin{aligned}
        \frac{\partial u}{\partial t}+\mathcal Au & =0
                                                  &      & \text{in $\Omega\times (0,\infty)$,} \\
        u(\cdot\,,0)                              & =u_0
                                                  &      & \text{in $\Omega$.}
      \end{aligned}
    \end{equation}
  \item $[T(t)\boldsymbol 1](x)\to 1$ as $t\downarrow 0$ for all
    $x\in\Omega$.
  \end{enumerate}
\end{proposition}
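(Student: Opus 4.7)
Part~(i) follows from abstract holomorphic semigroup theory combined with the definition of $A$ as the $L^2$-part of $\tilde A$; part~(ii) follows from part~(i) by applying interior parabolic regularity to the weak solution $u(x,t):=[T(t)\boldsymbol 1](x)$.

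For (i): since $-A$ generates a holomorphic $C_0$-semigroup, one has $u(t):=T(t)u_0\in D(A)$ for all $t>0$ with $u'(t)=-Au(t)$ in $L^2(\Omega)$. By the definition of $A$ via the form $\aaa$, for every $v\in V$---in particular for every $v\in C_c^\infty(\Omega)\subseteq H_0^1(\Omega)\subseteq V$---one has $\int_\Omega u'(t)v\,dx+\aaa(u(t),v)=0$. Testing against an arbitrary $\varphi\in C_c^\infty(\Omega\times(0,\infty))$ and integrating by parts in the time variable produces the weak formulation of $\partial_t u+\mathcal A u=0$, while strong $L^2$-continuity $T(t)u_0\to u_0$ as $t\to 0^+$ encodes the initial condition.

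For (ii): apply (i) with $u_0=\boldsymbol 1\in L^2(\Omega)$. First I would establish that $u$ is bounded on $\Omega\times(0,T)$: choosing $\omega$ large enough that $\aaa+\omega(\cdot,\cdot)_{L^2}$ is accretive on $V$, the Beurling--Deny truncation criterion (valid since $V$ is closed under $u\mapsto u\wedge 1$) yields $e^{-\omega t}T(t)\boldsymbol 1\leq\boldsymbol 1$, so $0\leq u(x,t)\leq e^{\omega t}$. Since $\mathcal A$ has bounded measurable coefficients and is uniformly elliptic, the De Giorgi--Nash--Moser interior H\"older regularity theorem for bounded parabolic weak solutions (see \cite{aronson:68:nsl,aronson:67:lbs}) furnishes a locally H\"older continuous representative of $u$ on $\Omega\times(0,\infty)$. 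Because the initial datum $\boldsymbol 1$ is constant on $\Omega$, the standard interior parabolic argument extends this H\"older estimate up to $t=0$ on any cylinder $K\times[0,T_0]$ with $\overline K\subset\Omega$; evaluating at $x=x_0\in\Omega$ and sending $t\to 0^+$ gives $[T(t)\boldsymbol 1](x_0)\to 1$.

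The main obstacle I anticipate is precisely this extension of interior parabolic H\"older continuity across $t=0$ in the absence of any hypothesis on $\partial\Omega$. A more self-contained alternative is to decompose $\boldsymbol 1=\chi+(\boldsymbol 1-\chi)$ with $\chi\in C_c^\infty(\Omega)$ and $\chi\equiv 1$ on a neighborhood of $x_0$. Since $\boldsymbol 1-\chi$ vanishes near $x_0$, interior parabolic regularity with vanishing local initial datum gives $[T(t)(\boldsymbol 1-\chi)](x_0)\to 0$; for the compactly supported piece $\chi\in D(A)$, the identity $T(t)\chi-\chi=-\int_0^t T(s)A\chi\,ds$ together with local regularity of $s\mapsto T(s)A\chi$ yields pointwise convergence $[T(t)\chi](x_0)\to\chi(x_0)=1$. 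Adding the two contributions proves the claim and keeps the whole argument in the interior, matching the paper's emphasis on avoiding boundary regularity.
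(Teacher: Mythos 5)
Your part (i) is close in spirit to the paper's argument, but it silently changes the notion of weak solution: you only test against $\varphi\in C_c^\infty(\Omega\times(0,\infty))$ and then say that strong $L^2$-continuity ``encodes the initial condition''. The weak-solution notion the paper needs (Aronson's, which is indispensable for part (ii)) uses test functions $v\in C_c^\infty(\Omega\times[0,\infty))$ with $v(0)\neq 0$, so that $u_0$ enters through the boundary term $\langle u_0,v(0)\rangle$ in the weak identity. To pass from the identity on $[s,\infty)$ to $s=0$ one must control $\int_0^\cdot\aaa(u(t),v(t))\,dt$ near $t=0$; this is exactly where the paper uses holomorphy to get $\|T(t)u_0\|_V\leq C t^{-1/2}\|u_0\|_2$ and hence an integrable singularity. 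That quantitative step is missing from your write-up, and without it you have not produced the weak solution in the sense that part (ii) (and the later examples) require.

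Part (ii) has genuine gaps. First, the bound $e^{-\omega t}T(t)\boldsymbol 1\leq\boldsymbol 1$ does not follow from accretivity of $\aaa+\omega(\cdot\,,\cdot)_{L^2}$ plus invariance of $V$ under truncation: the relevant (Ouhabaz) criterion demands $\aaa(u\wedge 1,(u-1)^+)+\omega\int_\Omega(u\wedge 1)(u-1)^+\,dx\geq 0$, and the divergence-form drift terms $a_k\in L^\infty(\Omega)$ produce the contribution $\int_\Omega\sum_k a_k\,\partial_k(u-1)^+\,dx$, which involves the gradient of $(u-1)^+$ and cannot be absorbed by $\omega\|(u-1)^+\|_1$ for any finite $\omega$ (a one-dimensional example with $a=\operatorname{sign}(\cdot-x_0)$ and a thin tent function shows quasi-$L^\infty$-contractivity can fail outright); recall the operators here are not assumed symmetric, so ``Beurling--Deny'' is not available as stated. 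Second, and more importantly, the decisive step --- continuity, or at least convergence, of $u$ at $t=0$ --- is exactly what you assert rather than prove, both in the phrase ``the standard interior parabolic argument extends this H\"older estimate up to $t=0$'' and in ``interior parabolic regularity with vanishing local initial datum gives $[T(t)(\boldsymbol 1-\chi)](x_0)\to 0$''. Interior De Giorgi--Nash--Moser estimates on $\Omega\times(0,\infty)$ degenerate as $t\downarrow 0$ because they require a backward-in-time cylinder; some device is needed to cross $t=0$. The paper's device is Aronson's extension principle: glue $T(t)\boldsymbol 1$ for $t>0$ with the constant $\boldsymbol 1$ for $t\leq 0$ (a solution of the heat equation), obtaining a weak solution of a homogeneous parabolic equation with time-dependent coefficients on all of $\Omega\times\mathbb R$, to which Aronson's Theorems B and C apply and give local boundedness and local H\"older continuity across $t=0$; evaluating at $(x_0,0)$ yields (ii). Your fallback decomposition also contains an error: for merely bounded measurable $a_{jk},a_k$ one has $\mathcal A\chi\in H^{-1}(\Omega)$ but in general $\mathcal A\chi\notin L^2(\Omega)$, so $\chi\in C_c^\infty(\Omega)$ need not lie in $D(A)$ and the identity $T(t)\chi-\chi=-\int_0^tT(s)A\chi\,ds$ is unavailable; even granting it, pointwise evaluation at $x_0$ would require uniform local sup-bounds on $T(s)A\chi$ as $s\downarrow 0$, which is the same difficulty in disguise.
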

\begin{proof}
  (i) Fix $u_0\in L^2(\Omega)$ and $v\in C_c^\infty(\Omega\times [0,\infty))$. If we set $u(t):=T(t)u_0$, then $u\in C^1((0,\infty),V)\cap C([0,\infty),L^2(\Omega))$. By the definition of $\aaa(\cdot\,,\cdot)$ and an integration by parts we have
  \begin{multline}
    \label{eq:weak-identity-s}
    0=\int_s^\infty\langle \dot u(t)+Au(t),v(t)\rangle\,dt\\
    =-\langle u(s),v(0)\rangle -\int_s^\infty\langle u(t),\dot
    v(t)\rangle\,dt +\int_s^\infty\aaa(u(t),v(t))\,dt
  \end{multline}
  for all $s>0$. Replacing $\mathcal A$ by $\mathcal A+\omega I$ we can assume without loss of generality that $\aaa$ is coercive, that is, \eqref{eq:elliptic-forms} holds with $\omega=0$. Then $\|T(t)\|_{\mathcal L(H}\leq 1$ and since $T(t)$ generates a holomorphic semigroup there there exists a constant $C>0$ such that $\|AT(t)\|_{\mathcal L(H)}\leq Ct^{-1}$ for all $t>0$. Therefore, using that $T(t)u_0\in D(A)$ for all $t>0$ we have from the definition of $A$
  \begin{multline*}
    \alpha\|T(t)u_0\|_V^2
    \leq \aaa(T(t)u_0,T(t)u_0)
    =\langle AT(t)u_0,T(t)u_0\rangle\\
    \leq \|AT(t)u_0\|_H\|T(t)u_0\|_H
    \leq Ct^{-1}\|u_0\|_H^2.
  \end{multline*}
  Hence $\|u(t)\|_V\leq C_1t^{-1/2}\|u_0\|_H$ with $C_1:=\sqrt{C/\alpha}$ for all $t\in(0,1]$. Thus
  \begin{equation*}
    |\aaa(u(t),v(t))|
    \leq M\|u(t)\|_V\|v(t)|_V
    \leq \frac{MC_1}{t^{1/2}}\|u_0\|_H\|v(t)\|_V
  \end{equation*}
  for all $t>0$ and $t\mapsto\aaa(u(t),v(t))$ has an integrable singularity at $t=0$. Letting $s\to 0$ in \eqref{eq:weak-identity-s}we conclude that
  \begin{equation*}
    0 = -\langle u_0,v(0)\rangle
    -\int_0^\infty\langle u(t),\dot v(t)\rangle\,dt
    +\int_0^\infty\aaa(u(t),v(t))\,dt
  \end{equation*}
  for all $v\in C_c^\infty(\Omega\times[0,\infty))$, so by definition $u$ is a weak solution of \eqref{eq:local-weak-solution} as claimed.

  (ii) We define
  \begin{equation*}
    \tilde{\mathcal A}(t)u:=
    \begin{cases}
      \mathcal Au & \text{if $t\geq 0$,} \\
      -\Delta u   & \text{if $t<0$.}
    \end{cases}
  \end{equation*}
  The \emph{extension principle} from \cite[page~621]{aronson:68:nsl} asserts that the function
  \begin{equation*}
    \tilde u(t):=
    \begin{cases}
      T(t)\boldsymbol 1 & \text{if $t>0$,}     \\
      \boldsymbol 1     & \text{if $t\leq 0$,}
    \end{cases}
  \end{equation*}
  is a weak solution of
  \begin{equation*}
    \frac{\partial\tilde u}{\partial t}+\tilde{\mathcal A}(t)\tilde u=0
    \qquad\text{in $\Omega\times\mathbb R$.}
  \end{equation*}
  As the equation is homogeneous, \cite[Theorem~B]{aronson:68:nsl} shows that $\tilde u$ is locally bounded on $\Omega\times\mathbb R$. Hence by \cite[Theorem~C]{aronson:68:nsl} it is locally H\"older continuous on $\Omega\times\mathbb R$.
\end{proof}

We now discuss some representative cases involving a variety of boundary conditions. Note that any type of mixed conditions could be used as well in quite obvious ways.

\begin{example}[Dirichlet Problem]
  \label{ex:dirichlet-general}
  For the Dirichlet problem we choose $\Gamma_0=\partial\Omega$ and $V=H_0^1(\Omega)$. By Proposition~\ref{prop:form-generation} the operator associated with $\aaa$ generates a positive, irreducible holomorphic $C_0$-semigroup $(T(t))_{t\geq 0}$ on $L^2(\Omega)$. By part (ii) of that proposition the semigroup satisfies condition \eqref{eq:zero-regular}. It follows for instance from \cite[Theorem~4.1 and~5.2]{daners:00:hke} or the results in \cite{aronson:68:nsl,arendt:97:ges,arendt:04:see} that $T(t)\in\mathcal L(L^2,L^\infty)$. Hence $T(t))_{t\geq 0}$ is ultra-contractive and $u(t)=T(t)u_0$ is a locally bounded weak solution in $\Omega\times(0,\infty)$. Hence by \cite[Theorem~C]{aronson:68:nsl} it is locally H\"older continuous on $\Omega\times\mathbb R$, which in particular means that $T(t)u_0\in C(\Omega)$ for all $t>0$. This implies that $(T(t))_{t\geq 0}$ satisfies \eqref{eq:smoothing-semigroup}, and thus it is locally smoothing.
\end{example}

\begin{example}[Neumann problems]
  \label{ex:neumann-general}
  We set $\Gamma_1=\partial\Omega$, $\beta=0$ and let $V=H^1(\Omega)$. We further assume that $\Omega$ satisfies a uniform interior cone condition or that it has the $L^1$-$W^{1,1}$-extension property. This is for instance the case for a Lipschitz or a smooth domain. It follows for instance from \cite[Theorem~4.3, 4.3 and~5.2]{daners:00:hke} that $T(t)\in\mathcal L(L^2,L^\infty)$, that is $(T(t))_{t\geq 0}$ is ultra-contractive. It follows as in Example~\ref{ex:dirichlet-general} that $(T(t))_{t>0}$ is a positive irreducible holomorphic $C_0$-semigroup that is locally smoothing.
\end{example}

\begin{example}[Robin problems]
  \label{ex:robin-general}
  Assume that $\Omega$ is a Lipschitz domain, $\Gamma_1=\partial\Omega$ and $\beta\in L^\infty(\partial\Omega)$. According to \cite[Section~3]{daners:09:ipg} the operators $(\mathcal A,\mathcal B)$ can be written in equivalent form with a new $\beta$ that is positive. Then \eqref{eq:elliptic-forms} holds for $V:=H^1(\Omega)$. Now similar arguments as in the Neumann case from Example~\ref{ex:neumann-general} imply that $(T(t))_{t>0}$ is a positive irreducible holomorphic $C_0$-semigroup that is locally smoothing.  In fact, as shown in \cite[Theorem~4.5]{arendt:20:spp} one has even more information, namely strict positivity of $T(t)u_0$ on $\bar\Omega$ for every $u_0\in L^2(\Omega)$ and $t>0$.
\end{example}

\paragraph{Acknowledgement:} Part of this research during visits of WA
to the University of Sydney and of DD to the University of Ulm. Both authors thank for the pleasant stay and the financial support.

\pdfbookmark[1]{\refname}{biblio}

\end{document}